\newtheorem{lemma}{Lemma}[section]
\newtheorem{theorem}[lemma]{Theorem}
\newtheorem{proposition}[lemma]{Proposition}
\newtheorem{corollary}[lemma]{Corollary}
\theoremstyle{definition}
\newtheorem{definition}[lemma]{Definition}
\newtheorem{example}[lemma]{Example}
\newtheorem*{remark}{Remark}
\numberwithin{equation}{section}
\newcommand{\comment}[1]{}
\newcommand{\Z}{{\mathbb Z}}
\newcommand{\R}{{\mathbb R}}
\newcommand{\N}{{\mathbb N}}
\newcommand{\A}{{\mathcal{A}}}
\newcommand{\LL}{{\mathcal{L}}}
\newcommand{\rH}{{\partial_h X}}
\newcommand{\rR}{{\partial_R X}}
\newcommand{\al}{{\alpha}}
\newcommand{\de}{{\delta}}
\newcommand{\ka}{{\kappa}}
\newcommand{\ph}{{\varphi}}
\newcommand{\lm}{{\lambda}}
\newcommand{\as}[1]{\left\langle #1\right\rangle}
\newcommand{\aV}[1]{\left\Vert #1\right\Vert}
\newcommand{\ow}[1]{\widetilde{ #1}}
\newcommand{\Hm}[1]{\leavevmode{\marginpar{\tiny%
$\hbox to 0mm{\hspace*{-0.5mm}$\leftarrow$\hss}%
\vcenter{\vrule depth 0.1mm height 0.1mm width \the\marginparwidth}%
\hbox to 0mm{\hss$\rightarrow$\hspace*{-0.5mm}}$\\\relax\raggedright
#1}}}
\begin{document}

\title[Uniformly transient graphs]{Note on uniformly transient  graphs}

\author[Keller]{Matthias Keller}
\address{M. Keller, Mathematisches Institut \\Friedrich Schiller Universit{\"a}t Jena \\07743 Jena, Germany }\email{m.keller@uni-jena.de}

\author[Lenz]{Daniel Lenz}
\address{D. Lenz, Mathematisches Institut \\Friedrich Schiller Universit{\"a}t Jena \\07743 Jena, Germany } \email{daniel.lenz@uni-jena.de}

\author[Schmidt]{Marcel Schmidt}
\address{M. Schmidt, Mathematisches Institut \\Friedrich Schiller Universit{\"a}t Jena \\07743 Jena, Germany } \email{schmidt.marcel@uni-jena.de}

\author[Wojciechowski]{Rados{\l}aw K. Wojciechowski}
\address{R. K. Wojciechowski
\\ The Graduate Center of the City University of New York \\ 365 Fifth Avenue \\ New York, NY 10016}
\address{ York College of the City University of New York \\ Jamaica, NY 11451 \\ USA  } \email{rwojciechowski@gc.cuny.edu}

\begin{abstract}
We study a special class of graphs with a strong transience feature
called uniform transience. We characterize uniform transience via a
Feller-type property and via validity of an isoperimetric
inequality. We then give a further characterization via  equality
of the Royden boundary and the harmonic boundary and  show that the
Dirichlet problem has a unique solution for such graphs. The Markov
semigroups and resolvents (with Dirichlet boundary conditions) on
these graphs are shown to be ultracontractive. Moreover, if the
underlying measure is finite, the semigroups and resolvents are
trace class and their generators have $\ell^p$ independent pure
point spectra (for $1 \leq p \leq \infty$).

Examples of uniformly transient graphs include Cayley graphs of
hyperbolic groups as well as trees and Euclidean lattices of
dimension  at least three. As a surprising consequence, the Royden
compactification of such lattices turns out to be the one-point
compacitifcation and the Laplacians of such lattices have pure point
spectrum if the underlying measure is chosen to be finite.
\end{abstract}

%\subjclass[2000]{Primary 39A12; Secondary 58J35}
%\date{\today}

\maketitle

%\tableofcontents

%%%%%%%%%%%%%%%%%%%%%%%%%%%%%%%%%%%%%%%%%%%%%%%%%%%%%%%%%
%%%%%%%%%%%%%%%%%%%%%%%%%%%%%%%%%%%%%%%%%%%%%%%%%%%%%%%%%

\section*{Introduction}
Spectral geometry  is concerned with the interplay of spectral
theory of Laplacians and the geometry of the underlying structure.
The two basic paradigms are given by  Riemannian manifolds and
graphs.  There are many similarities between the case of graphs and
the case of manifolds. Indeed, a common framework is provided within
the theory of Dirichlet forms. Still, there are also crucial
differences. Structurally, a main difference is that manifolds give
local Dirichlet forms whereas graphs give non-local Dirichlet forms.

\smallskip

As far as examples are concerned, there is also another difference:
In the case of manifolds, the Riemannian structure provides both the
Laplacian on smooth functions and a canonical measure. The
situation on graphs is rather  different. One is given two pieces of
data on a countably infinite set $X$  viz - in notation explained
below in Section \ref{section-setup} -
\begin{itemize}
\item  a graph structure $(b,c)$ and
\item a measure $m$
\end{itemize}
and these two pieces of data are completely \textit{independent}. In
this sense, there are more parameters available in the case of
graphs.

One way to deal with the abundance of possible measures  in the
graph case is to restrict attention to certain special measures. In
this context, two choices have attracted particular attention. One
is the measure derived from $b$ by taking $m$ to be the vertex
degree. The corresponding Laplacian is known as the normalized
Laplacian.  The other choice  is the constant measure. Both of these
choices  have their merits. Indeed, it seems that, for a long time,
the study of Laplacians on graphs was restricted to one of these two
choices. In particular, the spectral geometry of normalized
Laplacian has been quite a focus of attention,  see e.g.
\cite{Ak,BaJ,BHJ,BJ,CGY,HJ,HuJ,JL,Liu} and references therein as
well as \cite{HJ,PR}  for higher order Laplacians.

Recently, however, there has been an  outburst of all sorts of
studies of Laplacians on graphs with  general measures, see e.g.
\cite{ATH,BHK,BHY,CdVTHT, Fol1,Fol2,FLW,FP,Gol,GHM,Gue,HKLW,
HKW,HKMW,Hua1,KL1,KL2,KLVW,Mi,MT,Mug,Woj4} and references therein.
In some sense, a comparable development can be seen in the study of
manifolds. There, weighted manifolds have become a focus of
attention in certain questions of spectral geometric nature,  see
e.g. \cite{Gri, GM}.

Given  this situation,  there is  substantial interest in features
of the graph which do NOT depend on the choice of the measure.

One such feature is transience / recurrence. Another feature is
compactness of the underlying structure. Indeed, quite recently, the
concept of a  {canonically compactifiable graph} has been brought
forward  in \cite{GHKLW}. Canonically compactifiable graphs have
many  claims to  model a (relatively) compact situation.

Here, we present another property which is independent of the
measure. This property is stronger than transience and weaker than
canonical compactifiability.  There are various ways to look at this
property. Indeed, the main abstract result of this note (Theorem
\ref{main}) shows that it  can simultaneously  be seen as a strong
transience condition or as a strong Feller-type condition or as the
validity of a strong isoperimetric inequality. We call it
\textit{uniform transience}. This property has already appeared
in the literature in  several places, see e.g.
\cite{BCG,Win} in, yet, other manifestations. A systematic treatment
- as given below -  is still missing until now.

As discussed below, the class of uniformly transient graphs contains
all  non-trivial trees  and all Cayley graphs of hyperbolic groups
(with standard weights) as well as all transient graphs with a
quasitransitive automorphism group.  In particular, all Euclidean
lattices $\Z^d$ for $d\geq3$ fall into this category.

Uniform transience  has  a  certain compactness flavor to it.  In
fact, every canonically compactifiable graph is uniformly transient
(Corollary \ref{cc-implies-ut}). Thus, all models considered in
\cite{GHKLW} fall into our framework here. Moreover, it is possible
to characterize uniform transience by a boundedness condition with
respect to a certain metric (Theorem \ref{metric-criterion}).
Furthermore, it is possible to characterize uniform transience  via
the Royden boundary. As a consequence, we can show unique
solvability of the Dirichlet problem for uniformly transient graphs.
This is discussed in Section \ref{Royden}. The methods developed in
Section \ref{Royden} can be extended to reprove the (well-known)
existence of solutions for the Dirichlet problem on general graphs.
We include a discussion in Section \ref{DP-general-graphs}.

As mentioned already, all  canonically compactifiable graphs are
uniformly transient. We can even   characterize the canonically
compactifiable graphs within the class of uniformly transient graphs
as those which for which all harmonic functions of finite energy are bounded
(Theorem \ref{cc-within-ut}). In this context, we can also prove
that for a transient graph the Royden compactification agrees with
the one-point compactification if and only if the graph is uniformly
transient and has the Liouville property (Corollary
\ref{Liouville-one-point}). As a particular class of examples for
this we discuss Euclidean lattices.

Uniformly transient graphs yield ultracontractive semigroups
independently of the underlying measure (Lemma
\ref{t:ultracontractive}). This can then be used to show that they
yield pure point spectrum with $\ell^p$ independent spectrum
whenever the underlying measure is finite (Theorem
\ref{theorem-spectral}).

Our abstract  results give remarkable and  somewhat surprising
consequences for the  Euclidean lattices  $\Z^d$ for $d\geq 3$.
These can easily be seen to be uniformly transient. From this, we
then obtain that the Royden compactification of such a lattice is
the one-point compactification. This is in sharp contrast to the
case of smaller dimensions. In fact, the Royden compactification of
the one-dimensional lattice is an enormous object (see \cite{Wys}).
Moreover, we infer that the Laplacian on such a Euclidean lattice
has pure point spectrum whenever the lattice is equipped with a
finite measure. Details are discussed in the last two sections.

% Uniform transience  has to do with the behavior of the associated
% semigroup at infinity. Accordingly, one can expect that it is
% particularly tractable in spherical symmetric situations. This is
% indeed the case as we discuss in Section \ref{section-wssg}.

Our considerations make use of a certain  characterization of the
domain of the Laplacian with Dirichlet boundary condition and of a
certain characterization of transience. Both of these
characterizations are probably well-known. As we have not been able
to find them in the literature, we have included corresponding
discussions in one appendix each.  We also include an appendix discussing
the relation between harmonic functions and bounded harmonic functions.

\bigskip

\textbf{Acknowledgments.}  R.K.W. gratefully acknowledges financial
support provided by the Simons Foundation Collaboration Grant for
Mathematicians and PSC-CUNY Awards, jointly funded by the
Professional Staff Congress and the City University of New York.
M.S. has been financially supported by the Graduiertenkolleg 1523/2
: Quantum and gravitational fields. The work of D.L. and M.K. is
partially supported by the German Research Foundation (DFG).

\section{Framework: graphs, forms and Laplacians}\label{section-setup} In this section we introduce the
key objects of our study. These are forms on graphs and the
associated semigroups and Laplacians.  A convenient framework has
recently been  presented  in  \cite{KL1,KL2}. Here we follow these
works and refer to them  for further details and references.

\bigskip

Let $X$ be a countably infinite  set. The vector space of all
real-valued functions on $X$ is denoted by $C(X)$. The subspace of
all real-valued functions vanishing outside of a finite set is
denoted by $C_c (X)$ and the closure of $C_c (X)$ with respect to
the \textit{supremum norm}
$$\|f\|_\infty :=\sup_{x\in X} |f(x)|$$
is denoted by $C_0 (X)$. It is a complete normed space when equipped
with the supremum norm.

\smallskip

A \emph{graph} over $X$ is a pair $(b,c)$ such that
$b:X\times X\longrightarrow[0,\infty)$ is symmetric, has zero
diagonal, and satisfies
\begin{align*}
    \sum_{y\in X}b(x,y)<\infty
\end{align*}
for all $x\in X$ and $c:X\longrightarrow[0,\infty)$ is arbitrary.
Then,   $X$ is called  the \emph{vertex set}, $b$ the \emph{edge
weight} and $c$ the \emph{killing term} or \emph{potential}.
Elements $x,y\in X$ are said to be  \emph{neighbors} or
\emph{connected} by an edge of weight $b(x,y)$ if $b(x,y)>0$. If
the number of neighbors of each vertex is finite, then we call
$(b,c)$ or $b$ \emph{locally finite}. A finite sequence
$(x_{0},\ldots,x_{n})$ of pairwise distinct vertices such that
$b(x_{i-1},x_{i})>0$ for $i=1,\ldots,n$ is called a \emph{path} from
$x_0$ to $x_n$. We say that $(b,c)$ or $b$ is \emph{connected} if,
for every two distinct vertices $x,y\in X$, there is a path from $x$
to $y$.

\smallskip

Given a weighted graph $(b,c)$ over  $X$ we define the
\emph{generalized form} $\ow Q: C(X)\longrightarrow[0,\infty]$ by
\begin{align*}
\ow Q(f):=\frac{1}{2}\sum_{x,y\in X}b(x,y)|f(x)-f(y)|^{2}+\sum_{x\in X}c(x)|f(x)|^{2}
\end{align*}
and define the \emph{generalized form domain} by
\begin{align*}
    \ow D:=\{f\in C(X) :  \ow Q(f)<\infty\}.
\end{align*}
Functions in $\ow D$ are said to have \emph{finite energy}.

Clearly,  $C_{c}(X)\subseteq \ow D$  holds as $b(x,\cdot)$ is
summable for every $x\in X$. By Fatou's lemma, $\ow{Q}$ is lower semi-continuous
with respect to pointwise convergence. The form $\ow Q$ gives rise to a
semi-scalar product on $\ow D$ via
\begin{align*}
 \ow Q(f,g)=\frac{1}{2}\sum_{x,y\in X} b(x,y)(f(x)-f(y))(g(x)-g(y))+\sum_{x\in X}c(x)f(x)g(x).
\end{align*}
If $c\not\equiv 0$ and $b$ is connected, the form $\ow Q$ defines a
scalar product. Furthermore, the form $\ow{Q}$ satisfies a certain cut-off property.
Namely, for each {\em normal contraction} $C:\R\to \R$ (i.e., $C$ satisfies $|C(x) - C(y)| \leq |x- y|$
and $|C(z)| \leq |z|$ for arbitrary $x,y,z \in \R)$ and each $f \in C(X)$ we have
$$\ow{Q}(C \circ f) \leq \ow{Q}(f).$$

We will need the  following  well-known lemma (see e.g.
\cite{GHKLW}).

\begin{lemma}\label{l:DLip} Let $ (b,c)$ be a connected graph over $X$.
Then, for any $x,y\in X$, there exists  $d(x,y)\geq 0$ such that
for any $f\in \ow D$
\begin{align*}
|f(x)-f(y)|^{2}  \leq \ow Q(f) d(x,y).
\end{align*}
\end{lemma}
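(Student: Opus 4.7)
The plan is to exploit connectedness to reduce the bound on $|f(x)-f(y)|^2$ to a telescoping sum along a path, and then apply Cauchy--Schwarz so that the path-dependent constants separate from the energy.

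More concretely, by connectedness of $(b,c)$, fix any path $(x_0, x_1, \ldots, x_n)$ with $x_0 = x$, $x_n = y$ and $b(x_{i-1}, x_i) > 0$ for $i = 1, \ldots, n$. I would write the telescoping identity
\begin{equation*}
f(x) - f(y) = \sum_{i=1}^{n} \bigl( f(x_{i-1}) - f(x_i) \bigr)
 = \sum_{i=1}^{n} \frac{1}{\sqrt{b(x_{i-1},x_i)}} \cdot \sqrt{b(x_{i-1},x_i)}\, \bigl(f(x_{i-1}) - f(x_i)\bigr),
\end{equation*}
and then apply the Cauchy--Schwarz inequality to obtain
\begin{equation*}
|f(x) - f(y)|^2 \leq \left( \sum_{i=1}^{n} \frac{1}{b(x_{i-1},x_i)} \right) \cdot \left( \sum_{i=1}^{n} b(x_{i-1},x_i)\, |f(x_{i-1}) - f(x_i)|^2 \right).
\end{equation*}

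The first factor depends only on the chosen path and the edge weights, and I would set
\begin{equation*}
d(x,y) := 2 \sum_{i=1}^{n} \frac{1}{b(x_{i-1},x_i)},
\end{equation*}
which is finite because the path has finitely many edges and each edge weight along the path is strictly positive. For the second factor, since each unordered pair $\{x_{i-1},x_i\}$ contributes twice to the defining sum of $\widetilde{Q}$ and the potential term $\sum_{x} c(x)|f(x)|^2$ is nonnegative, we have
\begin{equation*}
\sum_{i=1}^{n} b(x_{i-1},x_i)\, |f(x_{i-1}) - f(x_i)|^2 \leq 2 \widetilde{Q}(f).
\end{equation*}
Combining the two bounds yields $|f(x)-f(y)|^2 \leq \widetilde{Q}(f)\, d(x,y)$, as required.

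There is no substantial obstacle: the only subtlety is the bookkeeping of the factor $2$ coming from the symmetry of $b$ and the normalization $\tfrac{1}{2}$ in $\widetilde{Q}$, which is precisely what motivates the choice of $d(x,y)$ above. Note that this construction also shows that $d$ may in fact be taken to be a pseudo-metric, by optimizing over all paths from $x$ to $y$, but for the lemma as stated the existence of a single such constant suffices.
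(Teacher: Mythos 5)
Your argument is correct: telescoping along a path from $x$ to $y$ and applying Cauchy--Schwarz is precisely the standard proof of this estimate, and the paper itself offers no proof, simply citing the lemma as well-known from \cite{GHKLW}, where the same path argument appears. The only cosmetic point is that your factor of $2$ is superfluous --- since the path has pairwise distinct vertices, each of its edges is a distinct unordered pair and hence contributes $b(x_{i-1},x_i)|f(x_{i-1})-f(x_i)|^2$ exactly once to $\tfrac{1}{2}\sum_{x,y}b(x,y)|f(x)-f(y)|^2$, so one may take $d(x,y)=\sum_{i=1}^{n}b(x_{i-1},x_i)^{-1}$ --- but this is harmless for the statement as given.
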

We  now choose a vertex $o\in X$ and define a semi-scalar product $\as{\cdot,\cdot}_{o}$ on $\ow D$ by
\begin{align*}
\as{f,g}_{o}=\ow Q(f,g)+f(o)g(o),
\end{align*}
for $f,g\in \ow D$ and the corresponding semi-norm
\begin{align*}
    \aV{f}_{o}:={\as{f,f}_{o}}^{1/2}=
    {(\ow Q(f)+|f(o)|^{2})}^{1/2}.
\end{align*}
If $b$ is connected, then $\as{\cdot,\cdot}_{o}$ defines a scalar
product and $\aV{\cdot}_{o}$ defines a norm on $\ow D$. The space
$\ow D$ has received a lot of interest since first being studied in
\cite{Yam}. A systematic investigation was given in the work of  Soardi
\cite{Soa}. In this context, we also   recall the following
well-known lemma which follows from Lemma \ref{l:DLip} above.

\begin{lemma}\label{l:pointevaluation} If $(b,c)$ is connected, then the point evaluation map
\begin{align*}
    \de_{x}:(\ow D,\aV{\cdot}_{o})\longrightarrow \R, \quad u\mapsto u(x),
\end{align*}
is continuous for each $x\in X$.
\end{lemma}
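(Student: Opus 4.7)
The plan is to prove this as a direct corollary of Lemma \ref{l:DLip} together with the definition of the norm $\aV{\cdot}_o$. Since $\de_x$ is a linear functional, continuity is equivalent to the existence of a constant $C_x \geq 0$ such that $|u(x)| \leq C_x \aV{u}_o$ for every $u \in \ow D$, so the task reduces to producing such a constant.

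First, I would invoke Lemma \ref{l:DLip} with the distinguished base vertex $o$ in place of $y$. Connectedness of $(b,c)$, which is exactly the hypothesis here, is what allows that lemma to be applied and furnishes a nonnegative number $d(x,o)$ with
\begin{align*}
|u(x)-u(o)|^{2} \leq \ow{Q}(u)\, d(x,o) \quad \text{for all } u \in \ow{D}.
\end{align*}

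Next, I would combine this with the triangle inequality $|u(x)| \leq |u(o)| + |u(x)-u(o)|$ and the elementary bound $(a+b)^2 \leq 2a^2 + 2b^2$ to obtain
\begin{align*}
|u(x)|^{2} \leq 2|u(o)|^{2} + 2\,\ow{Q}(u)\, d(x,o) \leq \bigl(2 + 2d(x,o)\bigr)\bigl(\ow{Q}(u) + |u(o)|^{2}\bigr) = \bigl(2 + 2d(x,o)\bigr)\aV{u}_{o}^{2}.
\end{align*}
Taking square roots yields $|\de_x(u)| \leq C_x \aV{u}_o$ with $C_x = \sqrt{2 + 2d(x,o)}$, which proves the claimed continuity.

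There is no real obstacle here: Lemma \ref{l:DLip} encodes exactly the nontrivial content (namely the propagation of energy control along paths in the connected graph), and the rest is a one-line estimate. The only small point to be careful about is absorbing the $|u(o)|^2$ term, which is unproblematic because it is built into the definition of $\aV{\cdot}_o$; this is precisely why the base point $o$ was included in the semi-scalar product in the first place.
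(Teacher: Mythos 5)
Your proof is correct and is exactly the argument the paper has in mind: the paper simply remarks that the lemma "follows from Lemma \ref{l:DLip} above," and your combination of that lemma (with $y=o$) with the elementary bound $(a+b)^2 \leq 2a^2+2b^2$ and the definition of $\aV{\cdot}_o$ is the standard way to fill in that one-line derivation. No gaps.
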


As the choice of $o\in X$ in the previous lemma is arbitrary, we
directly obtain the following consequence.

\begin{lemma}\label{l:equivalence} Let $(b,c)$ be a connected graph over $X$ and let $o_1, o_2\in X$ be
arbitrary. Then, the norms $\aV{\cdot}_{o_1}$ and $\aV{\cdot}_{o_2}$
are equivalent.
\end{lemma}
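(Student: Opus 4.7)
The plan is to show that each norm bounds the other, with symmetric bounds, by exploiting continuity of point evaluation at one base point together with the obvious bound $\ow Q(f) \leq \aV{f}_{o}^{2}$ built into the definition of the norms.

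First I would apply Lemma \ref{l:pointevaluation} with the choice $o = o_{1}$: this yields a constant $C = C_{o_{2},o_{1}} > 0$ such that $|f(o_{2})| \leq C \aV{f}_{o_{1}}$ for every $f \in \ow D$. (Alternatively, one can derive the very same estimate more explicitly from Lemma \ref{l:DLip}: the bound $|f(o_{1}) - f(o_{2})|^{2} \leq \ow Q(f)\, d(o_{1},o_{2})$ combined with the triangle inequality gives $|f(o_{2})|^{2} \leq 2|f(o_{1})|^{2} + 2\ow Q(f)\, d(o_{1},o_{2})$, which is of the required form.)

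Next I would square and add $\ow Q(f)$ to both sides:
\begin{align*}
\aV{f}_{o_{2}}^{2} = \ow Q(f) + |f(o_{2})|^{2} \leq \ow Q(f) + C^{2} \aV{f}_{o_{1}}^{2} \leq (1 + C^{2}) \aV{f}_{o_{1}}^{2},
\end{align*}
using $\ow Q(f) \leq \aV{f}_{o_{1}}^{2}$ for the last inequality. This gives one direction of the equivalence. Interchanging the roles of $o_{1}$ and $o_{2}$ and applying the same argument delivers the reverse estimate, so the two norms are equivalent.

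There is no real obstacle here; the statement is a routine consequence of the continuity of point evaluation (Lemma \ref{l:pointevaluation}) once one observes that the $\ow Q$-part of the norm is independent of the base point. The only thing to be mildly careful about is that Lemma \ref{l:pointevaluation} is formulated with a fixed $o$, so one must explicitly invoke it with $o = o_{1}$ (and with $o = o_{2}$ for the reverse bound) rather than treat the choice of base point as free inside a single application.
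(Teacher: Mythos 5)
Your proof is correct and follows exactly the route the paper intends: the paper derives this lemma "directly" from the continuity of point evaluation (Lemma \ref{l:pointevaluation}), which is precisely the estimate $|f(o_2)|\leq C\aV{f}_{o_1}$ you use, combined with the base-point independence of the $\ow Q$-part of the norm. Your explicit unwinding via Lemma \ref{l:DLip} is also just the paper's own derivation of Lemma \ref{l:pointevaluation}, so nothing differs in substance.
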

\begin{remark} In general $\aV{\cdot}_o$ and $\ow Q^{1/2}$ are not
equivalent norms on $C_c (X)$. In fact, they can be shown to be
equivalent if and only if the underlying graph is transient (see
Appendix \ref{Transience}).
\end{remark}
Our main focus of interest is a special subspace of $\ow D$. It is introduced next.
\begin{definition}[The space $\ow D_0$]
\label{def:Dzero} Let $(b,c)$ be a connected graph over $X$ and let
$o\in X$ be fixed.  Define $\ow D_0$ to be   the closure of $C_c
(X)$ in $\ow D$ with respect to $\aV{\cdot}_o$.
\end{definition}

\begin{remark} We think of the elements of $\ow D_0$ as functions
satisfying  ``Dirichlet boundary conditions at infinity.'' As is
clear from Lemma~\ref{l:equivalence}, $\ow D_0$
does not depend on the choice of $o\in X$. In fact,
$f\in \ow D$ belongs to $\ow D_0$ if and only if there exists a
sequence $(\varphi_n)$ in $C_c (X)$ with $\varphi_n \to f$ pointwise
and $\ow Q (\varphi_n - f)\to 0, n\to \infty$.
\end{remark}
\begin{lemma}\label{lem-properties D0}
Let $(b,c)$ be connected and $o \in X$ be fixed. Then $(\ow{D}_0, \as{\cdot,\cdot}_o)$ is a
Hilbert space. Furthermore, for each normal contraction $C:\R \to \R $
and each $f \in \ow{D}_0$ we have $C\circ f \in \ow{D}_0$.
\end{lemma}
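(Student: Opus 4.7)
The plan is to treat the two claims in turn, leveraging Lemma~\ref{l:pointevaluation} and the lower semicontinuity of $\ow Q$ with respect to pointwise convergence (noted just after the definition of $\ow Q$).

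For the Hilbert space assertion, I would take a Cauchy sequence $(f_n) \subset \ow D_0$ with respect to $\aV{\cdot}_o$. By Lemma~\ref{l:pointevaluation} each point evaluation is continuous, so $(f_n(x))$ is Cauchy in $\R$ for every $x \in X$ and yields a pointwise limit $f \in C(X)$. Applying lower semicontinuity to $f_n - f_m \to f_n - f$ pointwise (as $m\to\infty$) gives
\[
\ow Q(f_n - f) \le \liminf_{m \to \infty} \ow Q(f_n - f_m),
\]
and the Cauchy property drives the right-hand side to $0$ as $n \to \infty$. Combined with $f_n(o) \to f(o)$, this yields $\aV{f_n - f}_o \to 0$. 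In particular $f \in \ow D$, and since $\ow D_0$ is by definition a $\aV{\cdot}_o$-closed subspace of $\ow D$, one has $f \in \ow D_0$. That $\as{\cdot,\cdot}_o$ is actually a scalar product (not only a semi-scalar product) uses connectedness of $b$: $\aV{f}_o = 0$ forces $f$ to be constant on $X$ with $f(o)=0$, hence $f \equiv 0$.

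For the stability under a normal contraction $C$, observe first that $|C(z)| \le |z|$ at $z=0$ forces $C(0)=0$, so $C\circ\varphi \in C_c(X)$ whenever $\varphi \in C_c(X)$. Given $f \in \ow D_0$, pick $(\varphi_n) \subset C_c(X)$ with $\aV{\varphi_n - f}_o \to 0$; by Lemma~\ref{l:pointevaluation} also $\varphi_n \to f$ pointwise, and the Lipschitz continuity of $C$ gives $C\circ\varphi_n \to C\circ f$ pointwise. The cut-off inequality $\ow Q(C\circ \varphi_n) \le \ow Q(\varphi_n)$ together with $|C(\varphi_n(o))| \le |\varphi_n(o)|$ shows that $(C\circ\varphi_n)$ is bounded in the Hilbert space $\ow D_0$ just established. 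I would then extract a weakly convergent subsequence $C\circ\varphi_{n_k} \rightharpoonup g \in \ow D_0$ and test against the continuous linear functionals $\de_x$ to identify $g$ pointwise with $C\circ f$, which gives $C\circ f \in \ow D_0$.

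The one genuinely delicate point is in part (2): since $C$ is nonlinear, there seems to be no direct way to estimate $\ow Q(C\circ\varphi_n - C\circ f)$, so one is forced to pass through weak compactness in the Hilbert space from part (1) rather than argue by norm convergence of $C\circ\varphi_n$ itself.
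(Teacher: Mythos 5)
Your proposal is correct and follows essentially the same route as the paper: completeness via lower semicontinuity of $\ow Q$ under pointwise convergence together with continuity of the point evaluations, and stability under normal contractions via the cut-off inequality, weak compactness of the bounded sequence $(C\circ\varphi_n)$ in $(\ow D_0,\as{\cdot,\cdot}_o)$, and identification of the weak limit through pointwise convergence. Your write-up merely fills in details the paper states in one line (the completeness argument, $C(0)=0$, and testing against $\de_x$), and your closing remark correctly pinpoints why one must pass through weak convergence rather than norm convergence of $C\circ\varphi_n$.
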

\begin{proof}
The fact that $\ow{D}_0$ is a Hilbert space is a consequence of the lower-semicontinuity
of $\ow{Q}$ with respect to pointwise convergence. Now, let $f \in \ow{D}_0$
and a normal contraction $C$ be given. Let $(\varphi_n)$ be a sequence in $C_c(X)$
approximating $f$ with respect to $\aV{\cdot}_0$. Since $\ow{Q}(C\circ \varphi_n) \leq \ow{Q}(\varphi_n)$ by
the cut-off property, we obtain that $(C \circ \varphi_n)$ is a bounded sequence
in the Hilbert space $(\ow{D}_0, \as{\cdot,\cdot}_o)$. Thus, it has a weakly convergent
subsequence with limit $\varphi \in \ow{D}_0$. Since $\varphi_n \to f$ pointwise,
we obtain $C \circ \varphi_n \to C \circ f$ pointwise. Hence, $C\circ f  = \varphi \in \ow{D}_0$.
This finishes the proof.
\end{proof}

\medskip

Finally, we will need the concept of capacity.  In our context the
 \textit{capacity} $\mbox{cap} (x)$ of a point $x\in X$ is defined as
  $$\mbox{cap} (x) = \inf\{ \ow Q (\varphi) : \varphi\in C_c (X), \varphi (x) =1\}.$$

\medskip

We now assume that we are additionally given a measure $m$ on $X$ of
full support. Then $\ell^2 (X,m)$ is the vector space of square
summable (with respect to $m$) elements of $C (X)$. It is a
Hilbert space with respect to the inner product
$$\langle f, g\rangle :=\sum_{x\in X} f(x) g(x) m(x).$$
The associated norm is given by
$$\|f\|:=\langle f,f\rangle^{1/2}.$$
Whenever $(b,c)$ is a graph over $X$ and a measure $m$ of full
support is given, we  obtain  the bilinear  form $Q^{(D)}$ by
restricting $\ow Q$ to
\begin{align*}
 D(Q^{(D)}):=\overline{C_{c}(X)}^{\aV{\cdot}_{\ow Q}},
\end{align*}
where the closure is taken with respect to the norm
\begin{align*}
\aV{u}_{\ow Q}:={(\ow Q(u)+\aV{u}^{2})}^{1/2}.
\end{align*}
By definition  $Q^{(D)} (f,g)  = \ow Q (f,g)$ holds for $f,g \in D
(Q^{(D)})$. Thus, the key ingredient in the definition of $Q^{(D)}$
is the domain $D(Q^{(D)})$. Here, we have the following
characterization. We have not been able to find it in the
literature. Thus, we include proof in Appendix \ref{Domain}. It may be of
interest   in other situations as well.

\begin{lemma}[Characterization of
$D(Q^{(D)})$]\label{Characterization-D-Q-D} Let $(b,c)$ be a graph
over $X$ and $m$ be a measure on $X$ of full
support.  Then, $$\overline{C_{c}(X)}^{\aV{\cdot}_{\ow Q}} = \ow D_0
\cap \ell^2 (X,m).$$
\end{lemma}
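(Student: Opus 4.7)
The plan is to prove the two inclusions separately.

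The inclusion $\overline{C_{c}(X)}^{\aV{\cdot}_{\ow Q}} \subseteq \ow D_0 \cap \ell^2(X,m)$ is routine. If $\psi_n \in C_c(X)$ converges to $f$ in $\aV{\cdot}_{\ow Q}$, then $\ow Q(\psi_n - f) \to 0$ and $\aV{\psi_n - f} \to 0$ separately; the latter forces pointwise convergence since $m$ has full support. Combined with the $\ow Q$-convergence this places $f$ in $\ow D_0$ via the remark following Definition~\ref{def:Dzero}, and clearly $f \in \ell^2(X,m)$.

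For the reverse inclusion the difficulty is to obtain $\ow Q$- and $\ell^2$-convergence simultaneously from a sequence in $C_c(X)$: a generic approximant $\psi_n \in C_c(X)$ with $\aV{\psi_n - f}_o \to 0$ can fail in $\ell^2$ (think of a widening plateau function converging to $0$ in $\aV{\cdot}_o$ but with $\ell^2$-norm tending to infinity). So the approximants must be modified. Given $f \in \ow D_0 \cap \ell^2(X,m)$, pick $\psi_n \in C_c(X)$ with $\aV{\psi_n - f}_o \to 0$; by Lemma~\ref{l:pointevaluation} one also has $\psi_n \to f$ pointwise. The natural candidate is the dominated truncation
\begin{align*}
\varphi_n(x) := \mathrm{sgn}(\psi_n(x)) \min(|\psi_n(x)|, |f(x)|),
\end{align*}
which lies in $C_c(X)$, satisfies $|\varphi_n| \leq |f|$ (so in particular $\varphi_n \in \ell^2(X,m)$), and converges pointwise to $f$ by a straightforward case analysis.

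The main obstacle is controlling $\ow Q(\varphi_n)$. A case distinction on the signs of $\psi_n(x)$ and $\psi_n(y)$ yields the pointwise inequality
\begin{align*}
|\varphi_n(x) - \varphi_n(y)| \leq |\psi_n(x) - \psi_n(y)| + |f(x) - f(y)|,
\end{align*}
which together with $\sum_x c(x)\varphi_n(x)^2 \leq \sum_x c(x) f(x)^2$ produces a uniform bound on $\ow Q(\varphi_n)$. Since also $|\varphi_n(o)| \leq |f(o)|$, the sequence $(\varphi_n)$ is bounded in the Hilbert space $(\ow D_0, \as{\cdot,\cdot}_o)$ from Lemma~\ref{lem-properties D0}. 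Continuity of point evaluation (Lemma~\ref{l:pointevaluation}) combined with the uniqueness of weak subsequential limits yields $\varphi_n \rightharpoonup f$ weakly in $\ow D_0$. Mazur's lemma now furnishes finite convex combinations $\ow{\varphi}_N$ of the $\varphi_n$ with $\aV{\ow{\varphi}_N - f}_o \to 0$, hence $\ow Q(\ow{\varphi}_N - f) \to 0$. These combinations still lie in $C_c(X)$ and inherit the bound $|\ow{\varphi}_N| \leq |f|$, so dominated convergence produces $\aV{\ow{\varphi}_N - f} \to 0$. Adding the two convergences gives $\aV{\ow{\varphi}_N - f}_{\ow Q} \to 0$, which places $f$ in $\overline{C_{c}(X)}^{\aV{\cdot}_{\ow Q}}$ and completes the proof.
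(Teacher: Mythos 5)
Your proof is correct, and it takes a genuinely different route from the paper's. The paper disposes of the lemma in a few lines by embedding it into the general theory of Dirichlet forms: it invokes the identity $D = D_e \cap L^2(Y,\mu)$ for the extended Dirichlet space $D_e$ (Theorem~1.5.2 in \cite{Fuk}) together with the identification $D(Q^{(D)})_e = \ow{D}_0$ for graphs (Proposition~3.8 in \cite{Schm}). You instead give a self-contained argument, whose only nontrivial direction is handled by replacing a generic approximating sequence $(\psi_n)$ with the dominated truncations $\varphi_n = \mathrm{sgn}(\psi_n)\min(|\psi_n|,|f|)$; the pointwise Lipschitz-type estimate $|\varphi_n(x)-\varphi_n(y)|\leq |\psi_n(x)-\psi_n(y)|+|f(x)-f(y)|$ (which one checks most cleanly by writing $\varphi_n$ as the truncation of $\psi_n$ at level $|f|$ and using that truncation is $1$-Lipschitz in each argument) gives the uniform energy bound via the triangle inequality in the weighted $\ell^2$-space over $X\times X$, and then weak compactness in the Hilbert space $(\ow{D}_0,\as{\cdot,\cdot}_o)$, Mazur's lemma, and dominated convergence finish the job. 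This is exactly the same compactness-plus-convex-combinations pattern the paper itself uses in Lemma~\ref{lem-properties D0} and in Theorem~\ref{theorem-harmonic-functions}, so it fits the paper's toolkit well. What the paper's route buys is brevity and the conceptual placement of the result in the extended-Dirichlet-space framework; what yours buys is independence from the external references (in effect you reprove the graph case of the cited general theorem), at the cost of a page of honest estimates. The only point worth flagging is cosmetic: the lemma is stated for a general graph while $\ow{D}_0$, $\aV{\cdot}_o$ and the completeness of $(\ow{D}_0,\as{\cdot,\cdot}_o)$ are set up in the paper only for connected graphs, so strictly speaking one should argue component by component; this affects the paper's formulation as much as your proof.
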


There then exists a unique selfadjoint operator  $L:=L^{(D)}_m$ with
$$ \langle L f, g\rangle = Q^{(D)} (f,g)$$
for all $f$ in the domain of the operator $D(L)$  and $g\in
D(Q^{(D)})$.

This operator is non-negative and gives rise to a semigroup
$e^{-tL^{(D)}_m}, t\geq 0$, and resolvents $(L^{(D)}_m +
\alpha)^{-1}$, $\alpha >0$. The semigroup and the resolvents are
bounded operators on $\ell^2 (X,m)$. It turns out that their
restrictions to $C_c (X)$ can  be  uniquely extended to give bounded
operators on $\ell^p (X,m)$ for all $p\in [1,\infty)$, see \cite{KL1}.

\section{Uniformly transient graphs}
In this section we introduce the  class of graphs under
considerations. They can be characterized in three different ways
viz  via a transience property, via an isoperimetric inequality and
via a Feller-type property.

\bigskip

\begin{theorem}[The main characterization] \label{main} Let $(b,c)$ be a connected graph over $X$.
 Then, the following assertions are equivalent:
\begin{itemize}
\item[(i)] The inclusion $\ow D_0\subseteq C_0 (X)$ holds. (``Uniform transience'')

\item[(ii)] There exists  $C\geq 0$ with $\|\varphi\|_\infty \leq C
\ow Q^{1/2} (\varphi) $ for all $\varphi \in C_c (X)$. (``Supnorm
isoperimetricity'')

\item[(ii$^{\prime}$)]
For one (all)  $o\in X$ there exists $C_o\geq 0$ with
$\|\varphi\|_\infty \leq C_o \aV{\varphi}_o$ for all $\varphi \in
C_c (X)$.

\item[(iii)]  The inclusion $D(Q_m^{(D)})\subseteq C_0 (X)$ holds for any measure $m$ on $X$  of full support. (``Uniform strong Feller property'')
\item[(iii$^{\prime}$)] The inclusion  $D(Q_m^{(D)})\subseteq C_0 (X)$ holds for any measure $m$ on $X$  of full support with $m(X) <\infty$.

\item[(iv)] The inequality $\inf_{x\in X} \textup{cap}(x) >0$ holds. (``Uniform positive capacity of points'')
\end{itemize}
\end{theorem}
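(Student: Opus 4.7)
The plan is to establish the equivalences via the cyclic chain
\[
(\mathrm{iv}) \Rightarrow (\mathrm{ii}) \Rightarrow (\mathrm{ii}') \Rightarrow (\mathrm{i}) \Rightarrow (\mathrm{iii}) \Rightarrow (\mathrm{iii}') \Rightarrow (\mathrm{iv}).
\]
The first few steps are short. For (iv)~$\Rightarrow$~(ii), given a nonzero $\varphi \in C_c(X)$ with $|\varphi(x_0)| = \|\varphi\|_\infty$, applying the normal contraction $t \mapsto |t|$ and rescaling by $\|\varphi\|_\infty$ produces a function of value $1$ at $x_0$ with energy at most $\ow Q(\varphi)/\|\varphi\|_\infty^2$; the definition of $\textup{cap}$ then forces $\|\varphi\|_\infty^2 \leq \ow Q(\varphi) / \inf_x \textup{cap}(x)$. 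The step (ii)~$\Rightarrow$~(ii$'$) is immediate since $\aV{\cdot}_o \geq \ow Q^{1/2}$. For (ii$'$)~$\Rightarrow$~(i), since $\ow D_0$ is the $\aV{\cdot}_o$-closure of $C_c(X)$, any $\aV{\cdot}_o$-Cauchy sequence of compactly supported functions is sup-Cauchy by (ii$'$) and therefore converges uniformly to some element of $C_0(X)$; by Lemma~\ref{l:pointevaluation} this agrees pointwise with the $\ow D_0$-limit. Lemma~\ref{Characterization-D-Q-D} supplies (i)~$\Rightarrow$~(iii) directly through $D(Q_m^{(D)}) = \ow D_0 \cap \ell^2(X,m) \subseteq C_0(X)$, and (iii)~$\Rightarrow$~(iii$'$) is trivial.

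The main work lies in (iii$'$)~$\Rightarrow$~(iv), which I prove by contradiction. Assuming $\inf_x \textup{cap}(x) = 0$, use the normal contraction $t \mapsto 0 \vee t \wedge 1$ to choose $\varphi_n \in C_c(X)$ with $0 \leq \varphi_n \leq 1$, $\varphi_n(x_n) = 1$ for some $x_n \in X$, and $\sum_n \ow Q(\varphi_n) < \infty$. Fix a measure $m$ of full support and finite total mass. If $(x_n)$ may be taken constantly equal to some $x_0$, the sequence $(\varphi_n)$ is bounded in the Hilbert space $D(Q_m^{(D)})$ (since $\|\varphi_n\|_{\ell^2(X,m)}^2 \leq m(X) < \infty$), so has a weak limit $\varphi \in D(Q_m^{(D)})$ with $\varphi(x_0) = 1$ and, by lower semicontinuity, $\ow Q(\varphi) = 0$; connectedness of the graph then forces $\varphi$ to be a nonzero constant (when $c \equiv 0$), contradicting $\varphi \in C_0(X)$ as $X$ is infinite, or $\varphi \equiv 0$ (when $c \not\equiv 0$), contradicting $\varphi(x_0) = 1$. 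Otherwise, after passing to a subsequence $(x_n)$ leaves every finite set, and I set $f := \sup_n \varphi_n$. The pointwise bounds $|f(x) - f(y)|^2 \leq \sum_n |\varphi_n(x) - \varphi_n(y)|^2$ and $f(x)^2 \leq \sum_n \varphi_n(x)^2$ yield $\ow Q(f) \leq \sum_n \ow Q(\varphi_n) < \infty$. The truncations $f_N := \sup_{n \leq N} \varphi_n \in C_c(X)$ obey the analogous bound and hence form a bounded sequence in $\ow D_0$; by reflexivity of the Hilbert space $\ow D_0$, every subsequence has a weakly convergent sub-subsequence, whose limit agrees pointwise with $f$ by Lemma~\ref{l:pointevaluation} and the monotone convergence $f_N \uparrow f$, so $f \in \ow D_0$. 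Boundedness of $f$ gives $f \in \ell^2(X,m)$, and therefore $f \in D(Q_m^{(D)})$; yet $f(x_n) \geq 1$ and $(x_n)$ escapes to infinity, so $f \notin C_0(X)$, contradicting~(iii$'$).

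The principal obstacle throughout is the final case of (iii$'$)~$\Rightarrow$~(iv): verifying that the pointwise supremum $f = \sup_n \varphi_n$ actually lies in the Dirichlet-boundary space $\ow D_0$, and not merely in $\ow D$. This depends crucially on the subadditivity-type estimate for the supremum, the summability chosen for $\ow Q(\varphi_n)$, and the weak-compactness argument in $\ow D_0$ combined with pointwise identification of the weak limit. The other implications amount to essentially formal manipulations with definitions, normal contractions, and the density of $C_c(X)$ in the relevant spaces.
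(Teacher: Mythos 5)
Your proposal is correct; every implication in your cycle $(\mathrm{iv})\Rightarrow(\mathrm{ii})\Rightarrow(\mathrm{ii}')\Rightarrow(\mathrm{i})\Rightarrow(\mathrm{iii})\Rightarrow(\mathrm{iii}')\Rightarrow(\mathrm{iv})$ checks out, but the way you close the loop differs genuinely from the paper. The paper's only nontrivial arc is $(\mathrm{i})\Rightarrow(\mathrm{ii})$: it applies the closed graph theorem to the embedding $(\ow D_0,\aV{\cdot}_o)\hookrightarrow(C_0(X),\aV{\cdot}_\infty)$ and then shows by contradiction that $\aV{\cdot}_o$ and $\ow Q^{1/2}$ must be equivalent on $C_c(X)$ -- otherwise a normalized sequence with $\ow Q(\varphi_n)\to 0$ forces $|\varphi_n|\to 1$ and hence $1\in\ow D_0$, contradicting $(\mathrm{i})$; the arcs $(\mathrm{i})\Leftrightarrow(\mathrm{iii})\Leftrightarrow(\mathrm{iii}')$ and $(\mathrm{ii})\Leftrightarrow(\mathrm{iv})$ are dispatched as immediate from Lemma \ref{Characterization-D-Q-D} and the definition of capacity. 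Your hard arc is instead $(\mathrm{iii}')\Rightarrow(\mathrm{iv})$, proved by an explicit potential-theoretic construction: capacity-minimizing test functions $\varphi_n$ with $0\le\varphi_n\le 1$, $\varphi_n(x_n)=1$ and $\sum_n\ow Q(\varphi_n)<\infty$, whose pointwise supremum $f$ satisfies $\ow Q(f)\le\sum_n\ow Q(\varphi_n)$ by the subadditivity estimates, lies in $\ow D_0$ by the weak-compactness/pointwise-identification argument (the same device the paper uses in Lemma \ref{lem-properties D0}), hence lies in $D(Q_m^{(D)})=\ow D_0\cap\ell^2(X,m)$ for a finite measure, yet fails to vanish at infinity. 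Your route is more constructive: it avoids the closed graph theorem entirely, avoids the "choose a finite measure adapted to a given $f\in\ow D_0$" step implicit in the paper's $(\mathrm{iii}')\Rightarrow(\mathrm{i})$, and the case dichotomy ($\textup{cap}(x_0)=0$ for a single point versus minimizing points escaping to infinity) is handled cleanly in both branches. What the paper's route buys in exchange is brevity and a by-product it explicitly records in the remark after the proof -- namely that non-equivalence of $\aV{\cdot}_o$ and $\ow Q^{1/2}$ on $C_c(X)$ forces $c\equiv 0$ and $1\in\ow D_0$ -- which is then reused as the implication $(\mathrm{i})\Rightarrow(\mathrm{ii})$ of Theorem \ref{char-transience} in Appendix \ref{Transience}; your argument, while fully valid for the theorem itself, does not produce that side result.
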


\begin{remark} Of course, one can replace the condition $\varphi \in
C_c (X)$ by $\varphi \in \ow D_0$ in (ii) and (ii$^{\prime}$).
\end{remark}

\begin{proof}
We first show (i) $\Longrightarrow$ (ii). Choose $o\in X$ arbitrary.
By (i) and the closed graph theorem the map
$$(\ow D_0, \aV{\cdot}_o) \longrightarrow (C_0
(X),\aV{\cdot}_\infty), \; f\mapsto f,$$ is continuous. Thus, there
exists a  $C_1\geq 0 $ with
$$\|f\|_\infty \leq C_1 \aV{f}_o$$
for all $f\in \ow D_0$. Therefore, it suffices  to show that there
exists a $C_2\geq 0$ with
$$\aV {\varphi}_o \leq  C_2 \ow Q^{1/2} (\varphi)$$
for all $\varphi \in C_c(X)$.

Assume the contrary. Then, we can chose a
sequence $(\varphi_n) \in C_c(X)$ with
$$ \aV{\varphi_n}_o > n \ow Q^{1/2}(\varphi_n)$$
for all $n$. Without loss of generality, we can assume that
$\aV{\varphi_n}_o = 1$ for all $n$. This yields $\ow Q
(\varphi_n)\to 0, n\to \infty$ and then $|\varphi_n (o)|\to 1, n\to
\infty$. By Lemma~\ref{l:DLip} and  $\ow Q(|\varphi_n|) \leq \ow
Q(\varphi_n)\to 0 $ it follows that $|\varphi_n| \to 1$ pointwise as
$n \to \infty$.  By Fatou's Lemma, $\ow Q(1) \leq \lim_{n \to
\infty} \ow Q(|\varphi_n|) = 0$ so that $1 \in \ow D$ and $c \equiv
0$. Then the preceding considerations show, in fact, that the
sequence $(|\varphi_n|)$ from $C_c(X)$ converges to $1$ in the sense
of $\aV{\cdot}_o$. This in turn implies $1\in \ow D_0$ which
contradicts (i).

Due to the equivalence of all norms $\aV{\cdot}_o$, $o\in X$, given
in Lemma \ref{l:equivalence} the validity of   (ii$^{\prime}$) for one $o\in X$
is equivalent to the validity of (ii$^{\prime}$) for all $o\in X$.
The implications (ii) $\Longrightarrow$ (ii$^{\prime}) \Longrightarrow$ (i) are then clear.

The equivalence between (i) and (iii) and (iii$^{\prime}$)  follows easily
from the characterization
$$D (Q_m^{(D)}) = \ow D_0 \cap \ell^2 (X,m)$$
given in Lemma \ref{Characterization-D-Q-D}.

Finally, the equivalence between (ii) and (iv) follows easily from the definition of the capacity of a point.
\end{proof}

\begin{remark}
  Note that in the above proof of (i) $\Longrightarrow$ (ii) we have actually shown that if $\ow Q^{1/2}$ and $\aV{\cdot}_o$ are not equivalent norms on $C_c(X)$, then $c \equiv 0$ and $1 \in \ow D_0$ which is equivalent to recurrence as discussed in Appendix \ref{Transience}.

\end{remark}

\begin{definition}[Uniformly transient graphs]  Let $(b,c)$ be a connected graph over $X$.
Then, $(b,c)$ is called \textit{uniformly transient} if it
satisfies one of the equivalent conditions of the previous theorem.
\end{definition}

\begin{remark}[Context of the definition]
\begin{itemize}
\item   As is well-known, see e.g. \cite{Soa} or  Appendix \ref{Transience},  a connected graph
$(b,c)$ is \textit{recurrent} if and only if  the constant function
$1$ belongs to $\ow D_0$ and $\ow Q (1) =0$ holds. It is
\textit{transient} if it is not recurrent. Obviously, $1$ cannot
belong to $\ow D_0$ if $\ow D_0$ is contained in $C_0 (X)$. Thus,
condition (ii) of Theorem \ref{main} gives that uniformly transient
graphs do indeed satisfy a very uniform version of {transience}.

\item Condition (iv) is   the definition of uniform
transience given in \cite{BCG}.

\item The semigroup $e^{-t L^{(D)}_m} $, $t \geq 0$, associated to a
graph $(b,c)$ over $X$ satisfies the \textit{Feller property} if it
maps $C_c (X)$ into $C_0 (X)$. Now, the spectral calculus easily gives
that the semigroup always  maps $\ell^2 (X,m)$ into
$D(L^{(D)}_m)\subseteq D (Q^{(D)}_{m})$ for any $t>0$. Thus, condition
(iii) and (iii$^{\prime}$) of Theorem \ref{main} give a strong form
of the Feller property. For a recent study of the Feller property on
graphs we refer the reader to \cite{Woj4}.

\item Let us emphasize that  uniform transience (like transience) does not
depend on the measure but only on the form $\ow Q$, i.e.,
the graph structure  $(b,c)$.

\item As is well-known, transience is stable under extending graphs,
i.e., transience of a subgraph implies transience of the whole graph.
This stability is not true for uniform transience. Indeed, gluing
together a uniformly transient graph with a recurrent graph will result in a graph
which is not uniformly transient.  The same is true regarding the stability of the
Feller property, see \cite{Woj4}.

\item For a probabilistic approach to transience and various further
aspects of random walks on graphs we refer the reader to the
standard monograph \cite{Woe}.

\end{itemize}

\end{remark}

We next present three  classes of graphs which are uniformly
transient.

\medskip

Recall that a connected graph is \textit{canonically compactifiable}
in the sense of \cite{GHKLW} if any function in $\ow D$ is bounded.

\begin{corollary}[Canonically compactifiable graphs are
uniformly transient] \label{cc-implies-ut} Let $X$ be an infinite set and $(b,c)$ be a connected canonically
compactifiable graph over $X$. Then, $(b,c)$ is uniformly transient.
\end{corollary}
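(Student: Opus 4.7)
The plan is to verify condition (ii$^{\prime}$) of Theorem~\ref{main}, namely that for some (equivalently, all) $o\in X$ there exists $C_o\geq 0$ with $\|\varphi\|_\infty \leq C_o \aV{\varphi}_o$ for every $\varphi\in C_c(X)$. I will obtain this from the closed graph theorem applied to the inclusion of $\ow D_0$ into $\ell^\infty(X)$.

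Fix $o\in X$. Canonical compactifiability means exactly that every element of $\ow D$ is bounded. Since $\ow D_0\subseteq \ow D$, we therefore have an inclusion $\iota : \ow D_0 \hookrightarrow \ell^\infty(X)$. By Lemma~\ref{lem-properties D0}, $(\ow D_0, \as{\cdot,\cdot}_o)$ is a Hilbert space, and $(\ell^\infty(X), \|\cdot\|_\infty)$ is a Banach space. To apply the closed graph theorem it remains to check that the graph of $\iota$ is closed: if $f_n \to f$ in $\aV{\cdot}_o$ and $f_n \to g$ in $\|\cdot\|_\infty$, then Lemma~\ref{l:pointevaluation} gives $f_n(x)\to f(x)$ for each $x\in X$, while uniform convergence gives $f_n(x)\to g(x)$ for each $x\in X$; hence $f=g$.

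The closed graph theorem then supplies a constant $C_o\geq 0$ with $\|f\|_\infty \leq C_o \aV{f}_o$ for all $f\in \ow D_0$. Specializing to $\varphi\in C_c(X)\subseteq \ow D_0$ yields condition (ii$^{\prime}$) of Theorem~\ref{main}, so $(b,c)$ is uniformly transient. I do not foresee any real obstacle: all the ingredients needed — completeness of $\ow D_0$ with respect to $\aV{\cdot}_o$, continuity of point evaluations, and the boundedness of functions in $\ow D$ coming from canonical compactifiability — are directly available from results stated earlier in the excerpt.
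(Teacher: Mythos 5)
Your proof is correct and follows essentially the same route as the paper, which also deduces condition (ii$^{\prime}$) of Theorem~\ref{main} from the closed graph theorem; the only (harmless, arguably cleaner) difference is that you apply it to the inclusion of $\ow D_0$ rather than of all of $\ow D$ into $\ell^\infty(X)$, which lets you quote the completeness of $(\ow D_0,\as{\cdot,\cdot}_o)$ directly from Lemma~\ref{lem-properties D0} instead of relying on the completeness of $(\ow D,\aV{\cdot}_o)$, which the paper defers to the literature.
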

\begin{proof} If a graph is canonically compactifiable, it is not hard to infer from the closed graph theorem
that the map $(\ow D, \aV{\cdot}_o) \longrightarrow \ell^\infty(X)$ is continuous so that
such graphs satisfy property (ii$^{\prime}$) of
Theorem \ref{main} (see \cite{GHKLW} as well for details).
\end{proof}

Let us now turn to the another large class of uniformly transient graphs. % We say a measure $m$ is \emph{uniformly positive} if $$ \inf_{x\in X}m(x) >0.$$
For a measure $m$, we say the operator $L_{m}^{(D)}$ has a \emph{spectral gap}
if the bottom of the spectrum of $L_{m}^{(D)}$ is positive.

\begin{corollary}[Spectral gap] \label{cc-strong-isoperimitry} Let $(b,c)$ a  graph
 over $X$ and suppose $L_{m}^{(D)}$ has a spectral gap for $m$
 satisfying $\delta := \inf_{x\in X} m(x) >0$ (e.g. $m\equiv 1$).
Then, $(b,c)$ is uniformly transient.
\end{corollary}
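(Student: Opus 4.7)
The plan is to show directly that the spectral gap hypothesis implies condition (ii) of Theorem \ref{main}, namely supnorm isoperimetricity. Once that is established, the rest is immediate.

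First I would unpack the spectral gap assumption. Letting $\lambda > 0$ denote the bottom of the spectrum of $L_m^{(D)}$, the spectral theorem combined with the identity $\langle L_m^{(D)} f, f\rangle = Q^{(D)}(f)$ for $f$ in a core yields the Poincar\'e-type inequality
\begin{equation*}
\lambda \sum_{x \in X} |\varphi(x)|^2 \, m(x) \leq \ow Q(\varphi)
\end{equation*}
for all $\varphi \in C_c(X) \subseteq D(Q^{(D)})$, since $Q^{(D)}$ is the restriction of $\ow Q$ to its domain.

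Next I would use the assumption $m(x) \geq \delta > 0$ to pass from the $\ell^2(X,m)$-norm to the counting $\ell^2$-norm, giving
\begin{equation*}
\lambda \delta \sum_{x \in X} |\varphi(x)|^2 \leq \ow Q(\varphi)
\end{equation*}
for every $\varphi \in C_c(X)$. Since $\|\varphi\|_\infty^2 = \sup_{x \in X} |\varphi(x)|^2 \leq \sum_{x \in X} |\varphi(x)|^2$, this yields
\begin{equation*}
\|\varphi\|_\infty \leq \frac{1}{\sqrt{\lambda \delta}}\, \ow Q^{1/2}(\varphi)
\end{equation*}
for all $\varphi \in C_c(X)$, which is precisely condition (ii) of Theorem \ref{main} with $C = (\lambda\delta)^{-1/2}$. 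An appeal to Theorem \ref{main} then gives uniform transience.

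There is no real obstacle; the argument is a one-line computation once one recognizes that the spectral gap is a quantitative $\ell^2$ Poincar\'e inequality and that the uniform lower bound on $m$ lets one trade the weighted $\ell^2$-norm for the unweighted one, after which the pointwise bound $|\varphi(x)|^2 \leq \|\varphi\|_2^2$ is trivial. The only thing worth double-checking is that the bottom of the spectrum really does equal $\inf_{\varphi \in C_c(X), \|\varphi\|=1} Q^{(D)}(\varphi)$, which holds because $C_c(X)$ is a form core for $Q^{(D)}$ by definition of $D(Q^{(D)})$.
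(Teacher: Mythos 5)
Your argument is correct and is essentially the same as the paper's: both deduce the Poincar\'e inequality $Q^{(D)}_m(\varphi)\geq\lambda\|\varphi\|^2$ from the spectral gap, use $\inf_x m(x)\geq\delta$ to bound $\|\varphi\|_\infty^2\leq\delta^{-1}\|\varphi\|^2$, and conclude via condition (ii) of Theorem \ref{main}. Your closing remark that $C_c(X)$ is a form core for $Q^{(D)}$ (by the very definition of $D(Q^{(D)})$ as a closure of $C_c(X)$) correctly justifies the variational characterization of the bottom of the spectrum.
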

\begin{proof} As $L_{m}^{(D)}$ has a spectral gap $\lm>0$, we have $Q^{(D)}_{m}(\ph)\geq\lm\|\ph\|^{2}$
for all $\ph\in C_{c}(X)$. Thus, we have for all $\ph\in C_{c}(X)$
\begin{align*}
    \|\ph\|_{\infty}^{2}\leq \delta^{-1}  \|\ph\|^{2}\leq \delta^{-1}  \lm^{-1}\ow Q(\ph)
\end{align*}
which yields the statement by (ii) of Theorem \ref{main}. \end{proof}

%For a graph $(b,c)$ over $X$ the \emph{normalizing measure} $n$ is given by $n(x)=\sum_{y}b(x,y)+c(x)$. We say a graph satisfies \emph{strong isoperimetric inequality with respect to $n$} if there is $\al>0$ such that
%\begin{align*}     {b(\partial    W)}+c(W)>\al {n(W)},\qquad \mbox{for all     finite } W\subseteq X,
%\end{align*}
%where $b(\partial W)=\sum_{(x,y)\in W\times X\setminus W}b(x,y)$ . We say a measure $m$ is \emph{uniformly positive} if $$ \inf_{x\in X}m(x)>0.$$

%\begin{corollary}[$\ell^{2}(X,n)$ isoperimetricity] \label{cc-strong-isoperimitry} Let $X$ be an infinite set and $(b,c)$ a  graph over $X$ that satisfies a strong isoperimetric inequality with respect to $n$ and assume $n$ is uniformly positive. Then, $(b,c)$ is uniformly transient.
%\end{corollary}
%\begin{proof}As we assumed $n$ to be uniformly positive we have $\|\ph\|_{n}\ge c \|\ph\|_\infty$ for some $c>0$ and all $\ph\in C_{c}(X)$. As shown in \cite[Proposition~11]{KL2} a strong isoperimetric inequality  implies $\ow Q(\ph)\ge \be\|\ph\|_{n}^{2}$ for some $\beta>0$ and all $\ph\in C_{c}(X)$. Combining these two facts yields the statement by (i) of Theorem \ref{main}. \end{proof}

The corollary above implies  that all graphs with standard weights,
i.e., $b:X\times X\to\{0,1\}$ and $c\equiv0$, which satisfy a strong
isoperimetric inequality are uniformly transient. This includes, for
example, trees with all vertex degrees at least three  and Cayley
graphs of hyperbolic groups.

In the example below we discuss that the reverse implication of
Corollary~\ref{cc-implies-ut} does not hold.
Thus,  there exist
uniformly transient graph which are not canonically compactifiable.

\begin{example}
Consider a tree with standard weights with vertex degree larger than
two. Then, the Laplacian on the tree has a spectral gap and is
uniformly transient by Corollary~\ref{cc-strong-isoperimitry}.
However, it can be seen to be not canonically compactifiable.
Consider a path of vertices $(x_{n})$ in the tree  and denote by
$T_{n}$ the subtrees emanating from $x_{n}$, $n\ge0$,  (i.e. the
vertices of $T_{n}$ are those vertices of $X$ which are closer to
$x_n$ than to $x_k$ for any $k\neq n$).  Define a function $\ph$ by
letting $\ph(x)=\sum_{j=1}^{n}j^{-1}$ for $x\in T_{n}$. It is
immediate that $\ph\in \ow{D}$ but $\ph$ is not bounded.

Another, more abstract way, to see that the tree is non-compactifiable follows
from \cite{GHKLW} (using the notation of \cite{GHKLW}):
by \cite[Theorem~4.3.]{GHKLW} a graph is canonically
compactifiable if and only if the diameter with respect to the
metric $\rho$, which is the square root of the free effective
resistance, is finite. On trees $\rho^{2}$ equals the metric $d$
\cite[Lemma~8.1.]{GHKLW} which is the path metric with weights
$1/b(x,y)$. Clearly, $d$ has infinite diameter in the standard
weight case.
\end{example}

Finally, recall that a graph $(b,c)$ over $X$ is called
\textit{quasi-vertex-transitive} if there exists an $n\in\N$ and
vertices $x_1,\ldots, x_n$ such that for any  vertex $y$ in $X$
there exists  an $j\in\{1,\ldots, n\}$ and a  bijection $h :
X\longrightarrow X$ with $h(y) = x_j$ and  $c (h(z)) = c (z)$ and $
b (h(v), h(w)) = b(v,w)$ for all $z,v,w\in X$. If $n$ can be chosen
as $1$, the graph is called \textit{vertex-transitive}.

\begin{corollary}\label{c:vertrextransitive}
Whenever a  graph is both  quasi-vertex-transitive and transient it
is also uniformly transient.
\end{corollary}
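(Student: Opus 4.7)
The plan is to verify characterization (iv) of Theorem~\ref{main}, namely that $\inf_{x \in X} \textup{cap}(x) > 0$.

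First, I would use the quasi-vertex-transitive action to reduce the infimum to a minimum over the finite set $\{x_1, \ldots, x_n\}$. Given any $y \in X$, pick a bijection $h : X \to X$ preserving $b$ and $c$ with $h(y) = x_j$ for some $j \in \{1,\ldots,n\}$. A routine change of summation variable (using the invariance of $b$ and $c$ under $h$) shows $\ow Q(\varphi \circ h) = \ow Q(\varphi)$ for every $\varphi \in C(X)$, and $\varphi \mapsto \varphi \circ h$ is a bijection of $C_c(X)$ onto itself which sends $\{\varphi \in C_c(X) : \varphi(x_j) = 1\}$ onto $\{\psi \in C_c(X) : \psi(y) = 1\}$. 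Therefore $\textup{cap}(y) = \textup{cap}(x_j)$, so that
\[
\inf_{y \in X} \textup{cap}(y) = \min_{j = 1, \ldots, n} \textup{cap}(x_j).
\]

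Second, I would invoke the fact that on any transient connected graph every vertex has strictly positive capacity; this should be a byproduct of the characterization of transience recorded in Appendix~\ref{Transience}. If a direct argument is needed, one can assume $\textup{cap}(x) = 0$ and pick $\varphi_k \in C_c(X)$ with $\varphi_k(x) = 1$ and $\ow Q(\varphi_k) \to 0$; Lemma~\ref{l:DLip} then gives $|\varphi_k(y) - 1|^2 \leq \ow Q(\varphi_k)\, d(x,y) \to 0$, so $\varphi_k \to 1$ pointwise. Fatou applied to the potential term forces $c \equiv 0$ and hence $\ow Q(1) = 0$, so $\ow Q(\varphi_k - 1) = \ow Q(\varphi_k) \to 0$; together with $\varphi_k(o) \to 1$ this gives $\varphi_k \to 1$ in $\aV{\cdot}_o$, whence $1 \in \ow D_0$ with $\ow Q(1) = 0$, contradicting transience.

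Combining the two steps yields $\min_j \textup{cap}(x_j) > 0$, so $(b,c)$ is uniformly transient. The main obstacle is the second step: the defining condition of transience only forbids the constant function $1$ from lying in $\ow D_0$ with zero energy, and one has to upgrade this global fact into strict positivity of the capacity of \emph{each} individual vertex. The quasi-transitive reduction itself is essentially tautological once the capacity characterization of Theorem~\ref{main} is at hand.
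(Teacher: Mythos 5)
Your proposal is correct and takes essentially the same route as the paper: the paper's proof invokes Theorem~\ref{char-transience} to obtain, for each vertex $o$ of a transient graph, a constant $C_o$ with $|\varphi(o)|^2 \leq C_o \ow Q(\varphi)$ for all $\varphi \in C_c(X)$ (equivalently $\textup{cap}(o)>0$, which is your second step, there cited rather than reproved), then uses quasi-vertex-transitivity to make the constant uniform in $o$, and concludes via Theorem~\ref{main}. Your use of characterization (iv) in place of (ii)/(iii) and your explicit change-of-variables verification that automorphisms preserve capacity are only cosmetic differences.
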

\begin{proof} By transience and (iii) of
Theorem \ref{char-transience}  for any vertex $o$ in the
graph there exists a constant $C_o$ with $|\varphi(o)|^2 \leq C_o \ow Q
(\varphi)$ for all $\varphi \in C_c (X)$. By
quasi-vertex-transitivity the constants $C_o$ can be chosen
independently of $o\in X$. Now, the desired statement follows directly
from Theorem \ref{main}.
\end{proof}

\section{A metric criterion for uniform transience}
In this section we present a   characterization  for uniform
transience in terms of boundedness with respect to a certain metric.

\bigskip

Let $(b,c)$ a connected graph over the countably infinite  $X$ and $o\in X$ be arbitrary.  We define for $x,y\in X$
\begin{eqnarray*} \gamma_o (x,y) & :=&\sup\{ |\varphi(x) - \varphi (y) | : \varphi
\in C_c (X), \aV{\varphi}_o\leq 1\}\\
& = &\sup\{ |f(x) - f(y)| : f\in \ow D_0, \aV{f}_o \leq 1\}.
\end{eqnarray*}

Here, the last equality follows by approximation and Lemma
\ref{l:pointevaluation}. Similarly, we  define
 for $x,y\in X$
\begin{eqnarray*}
\gamma (x,y) & := &\sup\{ |\varphi(x) - \varphi (y) | : \varphi \in
C_c (X), \ow Q (\varphi) \leq 1\} \\
&= &\sup\{ |f(x) - f(y)| : f\in \ow D_0, \ow Q (f)  \leq 1\}.
\end{eqnarray*}

The crucial properties of $\gamma_o$ and $\gamma$ are given in the
next proposition.

\begin{proposition}[Properties of $\gamma$ and $\gamma_o$]
The map $\gamma_o: X\times X\longrightarrow [0,\infty)$ is a metric
for any $o\in X$ and so is the map $\gamma : X\times
X\longrightarrow [0,\infty)$. Any $f\in \ow D_0$ is uniformly
continuous with respect to $\gamma$. More specifically, $f\in \ow D_0$ satisfies
$$ |f (x) - f(y)| \leq  \ow Q (f)  \gamma (x,y)$$
for all $x,y\in X$. Moreover,  $\gamma_o \leq \gamma$ and
$$\gamma = \sup_{o\in X} \gamma_o$$ holds.
\end{proposition}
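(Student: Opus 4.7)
The plan is to verify in order: that $\gamma$ is a metric, that $\gamma_o$ is a metric, the Lipschitz-type bound yielding uniform continuity, the pointwise inequality $\gamma_o\le\gamma$, and finally the identity $\gamma=\sup_o\gamma_o$.

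For finiteness of $\gamma(x,y)$ I would invoke Lemma~\ref{l:DLip}: any admissible $\varphi\in C_c(X)$ satisfies $|\varphi(x)-\varphi(y)|^2\le \ow Q(\varphi)\,d(x,y)\le d(x,y)$, whence $\gamma(x,y)\le d(x,y)^{1/2}<\infty$. Symmetry and vanishing on the diagonal are immediate, and the triangle inequality follows by inserting $\varphi(z)$ into $|\varphi(x)-\varphi(y)|$ before passing to the supremum. For non-degeneracy with $x\ne y$, I would pick $\varphi\in C_c(X)$ with $\varphi(x)=1$ and $\varphi(y)=0$; connectivity of $b$ forces $\varphi$ to change across some edge, so $\ow Q(\varphi)>0$, and then $\varphi/\ow Q(\varphi)^{1/2}$ witnesses $\gamma(x,y)\ge\ow Q(\varphi)^{-1/2}>0$. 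The same scheme shows $\gamma_o$ is a metric, where finiteness comes from Lemma~\ref{l:pointevaluation} (continuity of point evaluation on $(\ow D,\aV{\cdot}_o)$ yields constants with $|\varphi(x)-\varphi(y)|\le(C_x+C_y)\aV{\varphi}_o$).

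For the Lipschitz-type estimate I would split cases: if $\ow Q(f)=0$, connectivity forces $f$ to be constant, so both sides vanish; otherwise $g:=f/\ow Q(f)^{1/2}$ satisfies $\ow Q(g)=1$, and the second form of $\gamma$ (as a supremum over $\ow D_0$) asserted in the statement gives $|f(x)-f(y)|\le\ow Q(f)^{1/2}\gamma(x,y)$. That second form is itself justified by approximating $g\in\ow D_0$ by $\varphi_n\in C_c(X)$ in $\aV{\cdot}_o$, using Lemma~\ref{l:pointevaluation} to pass to pointwise limits, and rescaling each $\varphi_n$ by $\max\{1,\ow Q(\varphi_n)^{1/2}\}$ to preserve admissibility. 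Uniform continuity of every $f\in\ow D_0$ with respect to $\gamma$ is then immediate. The inequality $\gamma_o\le\gamma$ is clear from $\aV{\varphi}_o^2=\ow Q(\varphi)+|\varphi(o)|^2\ge\ow Q(\varphi)$, which shows the admissible set for $\gamma_o$ is contained in the admissible set for $\gamma$.

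The only step requiring a small idea is the reverse inequality $\gamma\le\sup_o\gamma_o$, and here I would exploit the infinitude of $X$: every $\varphi\in C_c(X)$ has finite support, so there exists $o\in X\setminus\supp\varphi$; for such $o$ we have $\aV{\varphi}_o^2=\ow Q(\varphi)+0=\ow Q(\varphi)$, so the constraints $\ow Q(\varphi)\le 1$ and $\aV{\varphi}_o\le 1$ coincide. Hence $|\varphi(x)-\varphi(y)|\le\gamma_o(x,y)\le\sup_{o'}\gamma_{o'}(x,y)$, and passing to the supremum over $\varphi$ completes the proof. The main obstacles I anticipate are the degenerate case $\ow Q(f)=0$ in the Lipschitz estimate and ensuring $\ow Q(\varphi)>0$ in the non-degeneracy argument; both rely in a routine way on connectivity of $(b,c)$.
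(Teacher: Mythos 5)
Your proof is correct and follows essentially the same route as the paper's: finiteness via Lemmas~\ref{l:DLip} and \ref{l:pointevaluation}, non-degeneracy via compactly supported indicator-type functions, and--for the key identity $\gamma=\sup_o\gamma_o$--the same trick of choosing $o$ outside the support of a near-optimal $\varphi$ so that $\aV{\varphi}_o=\ow Q(\varphi)^{1/2}$. One remark: your Lipschitz bound comes out as $|f(x)-f(y)|\leq \ow Q(f)^{1/2}\gamma(x,y)$, which is the correct scaling; the exponent in the proposition as stated appears to be a typo.
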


\begin{proof} We first show that $\gamma_o$ is a metric:  The values of $\gamma_o$ are finite
by Lemma~\ref{l:DLip}. Symmetry and triangle inequality are clear. As the
characteristic function of any $x\in X$ belongs to $C_c (X)$, the map $\gamma_o$ is not degenerate.
Similarly, it can be shown that $\gamma$ is a metric.

\smallskip

The  statement concerning uniform continuity  is clear from the
definition of $\gamma$.

\smallskip

From the definition of $\gamma$ and $\gamma_o$ is is clear that
$\gamma_o\leq \gamma$ holds for any $o\in X$. To show the statement
on the supremum, let $x,y\in X$ be given  and choose  for
$\varepsilon
>0$ arbitrary $\varphi \in C_c (X)$ with $\ow Q (\varphi) \leq 1$
and
$$\gamma (x,y) \leq |\varphi (x) - \varphi(y)|  + \varepsilon.$$
If we now choose $o\in X$ with $\varphi (o) =0$, then we obtain
$\aV{\varphi}_o = \ow Q (\varphi)^{1/2} \leq 1$ and, hence,
$\gamma_o (x,y) \geq |\varphi (x) - \varphi (y)|$. Altogether, we
arrive at
$$\gamma (x,y) \leq \gamma_o (x,y) + \varepsilon.$$
As $\varepsilon >0$ is arbitrary this gives the desired statement on
the supremum.
\end{proof}

\begin{remark}
\begin{itemize}
\item  It is not hard to see that the supremum over $f\in \ow D_0$ can be replaced by a
maximum both for $\gamma$ and $\gamma_o$ (compare \cite{GHKLW} for a
similar reasoning).

\item If $(b,c)$ is transient, then $\aV{\cdot}_o$ and $\ow Q^{1/2}$
are equivalent norms on $C_c (X)$  for any $o\in X$ (see Appendix
\ref{Transience}). Thus, in this case, $\gamma$ and $\gamma_o$ are
equivalent metrics (see Corollary \ref{gamma-eq-gamma-o}).

\item It seems that the  metric in the definition is the square root of the metric
denoted  as \textit{wired resistance metric} in \cite{JP2} (in the
transient case).

\item The analogous situation where the supremum is taken over $f \in \ow D$ instead of
$f \in \ow D_0$ has received quite some attention (see e.g.
\cite{GHKLW} and references therein). The arising metric is the
(square root of the) \textit{resistance metric}. It has also played
a role in considerations inspired by non-commutative geometry
\cite{Davies2,HKT}.
\end{itemize}
\end{remark}

Recall that a metric space is said to have \textit{finite diameter}
if there exists  $C\geq 0$ such that the distance between any two
points is bounded by $C$.

\begin{theorem}[Metric criterion for uniform transience]
\label{metric-criterion} Let $(b,c)$ a connected graph over $X$. Then, the following statements are
equivalent:
\begin{itemize}
\item[(i)] The graph $(b,c)$ is uniformly transient.
\item[(ii)] The diameter of $(X,\gamma_o)$ is finite for one (all)
$o\in X$.
\item[(iii)] The diameter of $(X,\gamma)$ is finite.
\end{itemize}
\end{theorem}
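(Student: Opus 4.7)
The plan is to prove the equivalence as a cycle (i) $\Rightarrow$ (iii) $\Rightarrow$ (ii) $\Rightarrow$ (i), using Theorem \ref{main} as a bridge and exploiting two elementary mechanisms: scaling (to pass between norm inequalities and metric bounds) and the fact that for $\varphi \in C_c(X)$ one can always find vertices $y$ outside $\supp \varphi$ because $X$ is infinite.

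For (i) $\Rightarrow$ (iii), I would invoke condition (ii) of Theorem \ref{main} to produce a constant $C \geq 0$ with $\|\varphi\|_\infty \leq C \ow Q^{1/2}(\varphi)$ for all $\varphi \in C_c(X)$. Then for any $x,y \in X$ and any $\varphi \in C_c(X)$ with $\ow Q(\varphi) \leq 1$, the estimate $|\varphi(x) - \varphi(y)| \leq 2\|\varphi\|_\infty \leq 2C$ gives $\gamma(x,y) \leq 2C$, so $(X,\gamma)$ has finite diameter. The implication (iii) $\Rightarrow$ (ii) is immediate from $\gamma_o \leq \gamma$ (stated in the preceding proposition): if the $\gamma$-diameter is bounded by $D$, so is the $\gamma_o$-diameter for every $o \in X$. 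For (ii) $\Rightarrow$ (i), suppose the diameter of $(X,\gamma_o)$ is at most $D$. By scaling, the definition of $\gamma_o$ yields $|f(x)-f(y)| \leq \|f\|_o\, \gamma_o(x,y)$ for every $f \in \ow D_0$ and $x,y \in X$. Given $\varphi \in C_c(X)$ and $x \in X$, pick any $y \in X \setminus \supp \varphi$ (possible since $X$ is infinite); then $\varphi(y)=0$, so
\[
|\varphi(x)| = |\varphi(x)-\varphi(y)| \leq \|\varphi\|_o\, \gamma_o(x,y) \leq D \|\varphi\|_o.
\]
Taking the supremum over $x$ gives $\|\varphi\|_\infty \leq D \|\varphi\|_o$ for all $\varphi \in C_c(X)$, which is condition (ii$^{\prime}$) of Theorem \ref{main} and hence yields uniform transience.

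For the parenthetical ``one (all)'' in (ii), I would appeal to Lemma \ref{l:equivalence}: the norms $\aV{\cdot}_{o_1}$ and $\aV{\cdot}_{o_2}$ are equivalent, so the sets $\{\varphi : \aV{\varphi}_{o_j} \leq 1\}$ differ only by a bounded scalar factor, whence $\gamma_{o_1}$ and $\gamma_{o_2}$ are equivalent metrics and having finite diameter in one is the same as having finite diameter in the other. I do not expect any serious obstacle here; the main conceptual point is recognizing that Theorem \ref{main}(ii)/(ii$^{\prime}$) already expresses exactly the kind of sup-norm control that is dual, via the definition of $\gamma$ and $\gamma_o$, to boundedness of the associated metric diameter, and the rest is the standard ``evaluate at a vertex outside $\supp \varphi$'' device combined with homogeneity.
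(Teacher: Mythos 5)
Your proposal is correct and follows essentially the same route as the paper: (i)~$\Rightarrow$~(iii) via Theorem \ref{main}(ii) and the bound $|\varphi(x)-\varphi(y)|\leq 2\|\varphi\|_\infty$, (iii)~$\Rightarrow$~(ii) via $\gamma_o\leq\gamma$, and (ii)~$\Rightarrow$~(i) by converting the finite diameter into the sup-norm estimate of Theorem \ref{main}(ii$^{\prime}$). The only (harmless) difference is in the last step, where the paper writes $|f(x)|\leq|f(x)-f(o)|+|f(o)|\leq C+\aV{f}_o$ using the base point $o$ itself, while you evaluate at a vertex outside $\supp\varphi$; both devices yield the same inequality.
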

\begin{proof} (i) $\Longrightarrow$ (iii): By uniform transience and (i) of Theorem
\ref{main} there exists $C\geq 0$ with $\|\varphi\|_\infty \leq C
Q (\varphi)^{1/2}$ for all $\varphi \in C_c (X)$. This directly shows
$$ |\varphi (x) - \varphi (y)|\leq 2 \|\varphi\|_\infty \leq 2 C$$
for all $x,y\in X$ and all $\varphi \in C_c (X)$ with $\ow Q
(\varphi) \leq 1$. This implies  (iii).

\smallskip

(iii) $\Longrightarrow$ (ii): By the previous proposition, we have
$\gamma_o \leq \gamma$ for any $o\in X$. This gives (ii) (for all
$o\in X$).

\smallskip

(ii) $\Longrightarrow$ (i): Let  (ii) be valid for one $o\in X$.
Note that by Lemma \ref{l:equivalence} it then follows that (ii) is valid for all $o \in X$.
There then exists $C\geq 0$ such that $|f(x) - f(o)|\leq C$ for any
$x\in X$ and any $f\in \ow D_o$ with $\aV{f}_o \leq 1$. This gives
$$ |f(x)| \leq |f(x) - f(o)| + |f(o)| \leq C + \aV {f}_o \leq C + 1$$
for any $x\in X$ and any  $f\in \ow D_o$ with $\aV{f}_o\leq 1$. This
then implies
$$\|f\|_\infty \leq (C + 1) \aV{f}_o$$
for any $f\in \ow D_0$ and by part (ii$^{\prime}$) of Theorem \ref{main} the
desired statement  follows.

%\smallskip

%Note that we have proven that (iii) implies validity of  (ii) for
%all $o\in X$ and that validity of  (ii) for   one $o\in X$ implies
%validity of (i). Thus, we have shown along that way that validity of
%(ii) for one $o\in X$ is equivalent to validity of (ii) for all
%$o\in X$.
\end{proof}

\section{Uniform transience, the Royden compactification and the Dirichlet problem on the
boundary} \label{Royden} In this section we first discuss a
characterization of uniform transience in terms of the Royden
boundary of a graph. This will then allow us to show unique
solvability of the Dirichlet problem for uniformly transient graphs.

\medskip

Recall that the Royden compactification of a graph $(b,c)$ is the
unique (up to homeomorphism) compact Hausdorff space $R$
such that the following three conditions are satisfied:
\begin{itemize}
 \item $X$ is a dense open subset of $R$.
 \item Each function of the algebra $\ow{D}\cap \ell^\infty(X)$ can be uniquely extended to a continuous function on $R$.
 \item The algebra $\ow{D}\cap \ell^\infty(X)$ separates the points of $R$.
\end{itemize}

One can construct $R$ by applying Gelfand theory to the algebra generated by the uniform
closure of $\ow{D}\cap \ell^\infty(X)$ and the constant function $1$ which is a commutative
 $C^*$-algebra. For more details of this construction for graphs, we refer the reader to Section 4 of
 \cite{GHKLW} (see \cite{Roy} for the original work of Royden on
 manifolds).

\begin{definition}[Royden algebra $\mathcal{A}$]
Let $(b,c)$ be a connected graph over $X$. The uniform closure of  $ \ow{D} \cap \ell^\infty(X)$ in $\ell^\infty(X)$
is called the {\em Royden algebra} of $(b,c)$ and is denoted by $\A$. The unique extension of
$f \in \A$ to a function on $R$ will be denoted by $\hat{f}$.
\end{definition}

\begin{remark}
 \begin{itemize}
 \item Since $\ow{D} \cap \ell^\infty(X)$ separates the points of $R,$ the algebra $\A + {\rm Lin}\{1\}$
 is isomorphic to $C(R)$ by the Stone-Weierstrass theorem.
   \item It was shown in \cite{GHKLW} that $1 \in \A$ if and only if $c \in \ell^1(X)$.
  \item  For a different construction of $R$ (when $c\equiv 0$)  using a somewhat smaller
  Banach algebra  and further discussion we refer the reader to Chapter~6 of \cite{Soa}.
 \end{itemize}
\end{remark}

The set $\partial_R X = R \setminus X$ is called the {\em Royden
boundary} of $(b,c).$ The importance of the Royden boundary is due to the fact that
harmonic functions in $\ow{D}\cap\ell^\infty(X)$ are uniquely determined by their values on the closed subset
$$\partial_h X := \{x \in \rR\, : \, \hat{f}(x) = 0 \text{ for all } f\in \ow{D}_0 \cap \ell^\infty(X) \},$$
see the discussion below. We call $\rH$ the {\em harmonic boundary} of $(b,c)$.  In general it is strictly smaller
than the Royden boundary. However, it turns out that the validity of
$\partial_R X = \rH$ is equivalent to uniform transience.

\begin{theorem} \label{thm-char uniform transience royden boundary}
 Let $(b,c)$ be a connected graph over $X$. Then, the following assertions are equivalent:
 \begin{itemize}
  \item[(i)] $(b,c)$ is uniformly transient.
  \item[(ii)] The equality $\rH = \rR$ holds.
 \end{itemize}
\end{theorem}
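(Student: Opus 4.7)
The plan is to establish the two implications separately. Direction (i) $\Longrightarrow$ (ii) is fairly direct; direction (ii) $\Longrightarrow$ (i) requires a truncation argument to bridge the gap between the bounded functions controlling $\rH$ and the full space $\ow D_0$ appearing in condition (i) of Theorem~\ref{main}.

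For (i) $\Longrightarrow$ (ii), I would invoke Theorem~\ref{main}(i) to obtain $\ow D_0 \subseteq C_0(X)$, so in particular $\ow D_0 \cap \ell^\infty(X) \subseteq C_0(X)$. Using that $X$ is an open, and hence discrete, subset of $R$ (which can be checked from the fact that each $\delta_x \in C_c(X)$ extends continuously to a function vanishing off $\{x\}$), any net $(x_\alpha)$ in $X$ converging in $R$ to some $p \notin X$ must eventually leave every finite subset of $X$, so that every function in $C_0(X)$ vanishes in the limit along it. Consequently $\hat f$ is identically zero on $\rR$ for each $f \in \ow D_0 \cap \ell^\infty(X)$, yielding $\rR \subseteq \rH$; the reverse inclusion is tautological.

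For (ii) $\Longrightarrow$ (i), I would verify $\ow D_0 \subseteq C_0(X)$ directly. Given $f \in \ow D_0$, set $g := C \circ f$, where $C(t) := \max(-1,\min(1,t))$ is the truncation normal contraction. By Lemma~\ref{lem-properties D0}, $g \in \ow D_0$, and clearly $\|g\|_\infty \leq 1$, so $g \in \ow D_0 \cap \ell^\infty(X)$. The hypothesis $\rR = \rH$ then forces $\hat g$ to vanish on all of $\rR$. Combining the continuity of $\hat g$ on the compact space $R$ with the discreteness of $X$ in $R$, I would argue that $g \in C_0(X)$: any sequence of distinct vertices $(x_n)$ with $|g(x_n)| \geq \eps$ admits a subnet converging in $R$ to some $p \in \rR$, contradicting $\hat g(p) = 0$. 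Finally, I would deduce $f \in C_0(X)$ from $g \in C_0(X)$: off a finite set one has $|g(x)| < 1$, and by definition of $C$ this forces $g(x) = f(x)$ there; thus $f$ and $g$ agree outside a finite set, so $f \in C_0(X)$.

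The only genuine obstacle is that the hypothesis $\rR = \rH$ constrains $\ow D_0$ only through its bounded elements, while the conclusion concerns all of $\ow D_0$. The truncation trick closes this gap, and it is essential to truncate at a \emph{fixed} level rather than at levels tending to infinity: only at a fixed level does one retain the identification $g = f$ on the cofinite set where $|f| < 1$, which is what allows the conclusion for $f$ to be read off from that for $g$.
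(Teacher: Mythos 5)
Your proof is correct and follows essentially the same route as the paper: both directions rest on the compactness of $R$, the fact that nets converging to boundary points eventually leave every finite subset of $X$, and a truncation by a normal contraction (Lemma \ref{lem-properties D0}) to reduce the unbounded case to the bounded one — the paper packages this last step as ``without loss of generality $f$ is bounded'' inside a proof by contradiction, while you run the same argument directly via $g = C\circ f$ and the observation that $f$ and $g$ agree off a finite set. The difference is purely organizational.
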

\begin{proof}
(i) $\Longrightarrow$ (ii): Assume $(b,c)$ is uniformly transient, i.e.,
$\ow{D}_0 \subseteq C_0(X)$ holds. Since $X$ is dense in $R$, we
can approximate any $x \in R$ by a net $(x_i) \subseteq X$. Any
such net converging to a boundary point will eventually leave every
finite subset of $X$. As functions in $C_0(X)$ eventually become
arbitrarily small, we infer $\lim_if(x_i) =0$, for each $f \in
\ow{D}_0 \subseteq C_0(X)$. Now, the statement follows from the
continuity of $\hat{f}$ on $R$ and the fact that $\hat{f}|_X = f$.

(ii) $\Longrightarrow$ (i): Assume $\rH = \rR$ and suppose $(b,c)$ is
not uniformly transient. Then there exists a function $f \in
\ow{D}_0$, a constant $C > 0$ and a sequence $(x_n) \subseteq X$
leaving every finite subset of $X$ such that
$$|f(x_n)| \geq C, \text{ for all } n\geq 1.$$
Without loss of generality we may assume that $f$ is  bounded. As
$R$ is compact, the sequence $(x_n)$ has a convergent subnet with
limit $x \in \rR$. From the continuity of $\hat{f}$ we infer
$|\hat{f}(x)| \geq C > 0$. But this implies $x \not\in \rH,$ which
is a contradiction.
\end{proof}

\medskip

We will now study the relation of  $\rH$ and harmonic functions in $\A$. Let us first recall
the definition and some properties of harmonic functions. Given a weighted graph $(b,c)$
over $X$ we introduce the associated {\em formal Laplacian} $\LL$ acting on
$$\ow{F}:= \{ f \in C(X) : \sum_{y \in X} b(x,y) |f(y)| < \infty \text{  for all } x\in X \}$$
as
$$\LL f(x) := \sum_{y \in X} b(x,y) (f(x) - f(y)) + c(x) f(x).$$
The operator $\LL$ can be seen as a discrete analogue to the Laplace-Beltrami operator
on a Riemannian manifold. We will be interested in {\em harmonic functions,} i.e., functions
 $f \in \ow{F}$ satisfying $\LL f = 0$. The formal operator $\LL$ is intimately linked to the
  form $\ow{Q}$ by the following lemma.

\begin{lemma}[Green's formula] \label{lem-green formula}
 The inclusion $\ow{D} \subseteq \ow{F}$ holds and for each $f \in \ow{D}$ and each $g \in C_c(X)$ the equality
 $$\ow{Q}(f,g) = \sum_{x \in X} (\LL f)(x) g(x)$$
 is satisfied. Furthermore, if $f\in \ow{D}$ is harmonic, the above equality extends to all $g \in \ow{D}_0$ and is equal to $0$.
\end{lemma}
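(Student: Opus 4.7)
The plan has three steps: establish $\ow{D} \subseteq \ow{F}$, verify the identity for $g \in C_c(X)$ by a direct algebraic manipulation exploiting $b(x,y) = b(y,x)$, and then pass to the limit using the definition of $\ow{D}_0$ to cover harmonic $f$.

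For the inclusion $\ow{D} \subseteq \ow{F}$, I would fix $f \in \ow{D}$ and $x \in X$ and bound $\sum_y b(x,y)|f(y)|$ via the triangle inequality by $|f(x)|\sum_y b(x,y) + \sum_y b(x,y)|f(y)-f(x)|$. The first term is finite by the summability assumption on $b(x,\cdot)$, and Cauchy--Schwarz with weights $b(x,y)$ controls the second by $(\sum_y b(x,y))^{1/2}(2\ow{Q}(f))^{1/2}$.

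For the Green identity with $g \in C_c(X)$, I would expand $(f(x)-f(y))(g(x)-g(y))$ into four products and collect the resulting four double sums. The symmetry of $b$ together with a relabeling $x \leftrightarrow y$ identifies the first two (those containing $f(x)g(x)$ and $f(y)g(y)$) and also the last two, yielding
$$\frac{1}{2}\sum_{x,y}b(x,y)(f(x)-f(y))(g(x)-g(y)) = \sum_{x}\left(\sum_y b(x,y)(f(x)-f(y))\right)g(x).$$
Combined with the $c$-diagonal term in $\ow{Q}(f,g)$, this reads $\sum_x(\LL f)(x)g(x)$. The one delicate point is the Fubini-type rearrangement. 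However, writing $F := \supp g$, every nonzero contribution requires $x \in F$ or $y \in F$, so the double sum decomposes into the finite piece over $F \times F$ and two pieces of the form $\sum_{x \in F} g(x) \sum_{y \notin F} b(x,y)(f(x)-f(y))$ (and its symmetric counterpart). By the previous step each inner sum is absolutely convergent, so Fubini applies and the rearrangement is legitimate.

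For the extension to $g \in \ow{D}_0$ under the assumption $\LL f = 0$, the right-hand side $\sum_x(\LL f)(x)g(x)$ vanishes identically, so only $\ow{Q}(f,g) = 0$ remains. Given $g \in \ow{D}_0$, choose $\varphi_n \in C_c(X)$ with $\aV{\varphi_n - g}_o \to 0$, which in particular forces $\ow{Q}(\varphi_n - g) \to 0$. Cauchy--Schwarz for the semi-definite bilinear form $\ow{Q}$ then gives $|\ow{Q}(f, g - \varphi_n)| \leq \ow{Q}(f)^{1/2}\ow{Q}(g - \varphi_n)^{1/2} \to 0$, and since $\ow{Q}(f,\varphi_n) = 0$ by the compactly supported case, passing to the limit yields $\ow{Q}(f,g) = 0$. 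The only genuine obstacle anywhere is the absolute-convergence bookkeeping in the middle step; once that is in place, the rest is routine Cauchy--Schwarz and density.
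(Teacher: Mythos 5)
Your proposal is correct and follows essentially the same route as the paper: the same triangle-inequality-plus-Cauchy--Schwarz argument for $\ow{D}\subseteq\ow{F}$, and the same density/Cauchy--Schwarz argument for the extension to harmonic $f$ and $g\in\ow{D}_0$. The only difference is that where the paper cites Lemma~4.7 of \cite{HK} for the identity with $g\in C_c(X)$, you prove it directly by expanding, symmetrizing, and justifying the Fubini rearrangement via the finiteness of $\supp g$ together with the absolute convergence from the first step --- a legitimate, self-contained substitute for that citation.
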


\begin{proof}
We first show the inclusion $\ow{D} \subseteq \ow{F}$ by using the argument of the proof of
Proposition 3.8 of \cite{HKLW}. Letting $B_x := \sum_{y \in X} b(x,y)$ we estimate
\begin{align*}
 \sum_{y \in X} b(x,y)|f(y)| &\leq  \sum_{y \in X} b(x,y)|f(x) - f(y)| + \sum_{y \in X} b(x,y)|f(x)| \\
 & \leq \Big(\sum_{y \in X} b(x,y) \Big)^{\frac{1}{2}} \Big( \sum_{y \in X} b(x,y)|f(x) - f(y)|^2\Big)^{\frac{1}{2}} + B_x |f(x)|\\
 &\leq B_x^{1/2} \ow{Q}^{1/2}(f) + B_x |f(x)|
\end{align*}
which shows the claim. Combining $\ow{D} \subseteq \ow{F}$ and Lemma 4.7 of \cite{HK} we obtain the equality
 $$\sum_{x \in X} \LL f(x) g(x) = \ow{Q}(f,g) $$
for $f \in \ow{D}$ and $g \in C_c(X).$ The furthermore statement follows from this equality
and the denseness of $C_c(X)$ in $\ow{D}_0$ with respect to $\aV{\cdot}_o$.
\end{proof}

The following lemmas are well-known and easy to check, confer Section~5 of \cite{GHKLW}.

\begin{lemma} \label{lem-superharmonic}
 Assume that $(b,c)$ is connected. If $f \in \ow{F}$ is non-negative and not constant and
 satisfies $\LL f\leq 0$ on $X$, then $f$ does not attain a maximum on $X$.
\end{lemma}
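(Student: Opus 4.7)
The plan is the standard discrete strong maximum principle argument. I would argue by contradiction: suppose $f$ attains its supremum $M := \sup_{x \in X} f(x)$ at some vertex $x_0 \in X$. Since $f$ is non-negative and non-constant, one must have $M > 0$; indeed, if $M = 0$ then $0 \leq f \leq 0$ would force $f \equiv 0$, which is constant, contradicting the hypothesis.

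Next I would exploit the condition $\LL f(x_0) \leq 0$ at the maximizer. Writing out the definition of $\LL$ gives
$$\sum_{y \in X} b(x_0,y)\bigl(f(x_0) - f(y)\bigr) + c(x_0) f(x_0) \leq 0.$$
Every summand on the left is non-negative: each term $b(x_0,y)(M - f(y))$ is non-negative because $b(x_0,y) \geq 0$ and $f(y) \leq M$, while $c(x_0) f(x_0) \geq 0$ because $c \geq 0$ and $f \geq 0$. Hence every summand must vanish. Using $M > 0$, this simultaneously forces $c(x_0) = 0$ and $f(y) = M$ for every neighbor $y$ of $x_0$ (i.e., for every $y$ with $b(x_0,y) > 0$).

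The final step is iteration along paths, which is where connectedness enters. Given any $z \in X$, choose a path $x_0, x_1, \ldots, x_n = z$ in $(b,c)$. By the previous paragraph applied at $x_0$, we have $f(x_1) = M$, so the same maximum-principle argument applies at $x_1$, giving $f(x_2) = M$. Iterating along the path yields $f(z) = M$. Hence $f \equiv M$, contradicting the non-constancy of $f$. I do not expect any serious obstacle: the only real subtlety is ensuring the killing term does not obstruct the argument, and this is taken care of by the observation that $M > 0$, which follows from combining non-constancy with non-negativity.
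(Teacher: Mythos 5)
Your proof is correct and is precisely the standard strong maximum principle argument that the paper has in mind when it calls the lemma ``well-known and easy to check'' (deferring to Section~5 of \cite{GHKLW}): evaluate $\LL f$ at a maximizer, use non-negativity of all summands together with $M>0$ to propagate the value $M$ to all neighbors, and iterate along paths via connectedness. The one point worth noting explicitly is that the sum $\sum_{y}b(x_0,y)(f(x_0)-f(y))$ converges absolutely because $f\in\ow{F}$ and $\sum_y b(x_0,y)<\infty$, which justifies the term-by-term vanishing argument; with that observation your argument is complete.
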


\begin{lemma}\label{lem-harmonic}
 If $f \in \ow{F}$ satisfies $\LL f = 0$ on $X,$ then $|f|$ satisfies $\LL|f| \leq 0.$
\end{lemma}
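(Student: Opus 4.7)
The plan is to establish the pointwise inequality $\LL|f|(x) \leq 0$ for every $x \in X$ via a discrete Kato-type argument, splitting into cases according to the sign of $f(x)$. First I would note that $|f| \in \ow{F}$ is immediate, since $\sum_{y} b(x,y)\bigl||f|(y)\bigr| = \sum_{y} b(x,y)|f(y)| < \infty$ by hypothesis that $f \in \ow{F}$. The same summability guarantees that all rearrangements of the series defining $\LL f(x)$ and $\LL|f|(x)$ below are absolutely convergent, so manipulating them term-by-term is legal.

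Fix $x \in X$ and split into three cases. If $f(x) > 0$, so that $|f(x)| = f(x)$, I would compute
\begin{align*}
\LL|f|(x) - \LL f(x) &= \sum_{y \in X} b(x,y)\bigl((|f(x)|-|f(y)|) - (f(x)-f(y))\bigr) \\
&= \sum_{y\in X} b(x,y)\bigl(f(y)-|f(y)|\bigr) \leq 0,
\end{align*}
because $f(y) \leq |f(y)|$ for every $y$. Combined with $\LL f(x) = 0$, this yields $\LL|f|(x) \leq 0$. If $f(x) < 0$, so that $|f(x)| = -f(x)$, the symmetric computation gives
\begin{align*}
\LL|f|(x) + \LL f(x) &= \sum_{y\in X} b(x,y)\bigl((|f(x)|-|f(y)|)+(f(x)-f(y))\bigr) \\
&= -\sum_{y\in X} b(x,y)\bigl(|f(y)|+f(y)\bigr) \leq 0,
\end{align*}
and once more $\LL f(x)=0$ delivers $\LL|f|(x)\leq 0$.

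Finally, if $f(x) = 0$, then the equation $\LL f(x) = 0$ reduces to $\sum_{y}b(x,y)f(y) = 0$, and the potential term in $\LL|f|(x)$ likewise vanishes, leaving
$$\LL|f|(x) = -\sum_{y\in X} b(x,y)|f(y)| \leq 0.$$
This covers all three cases, proving the lemma.

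There is essentially no main obstacle here; the statement is a discrete Kato inequality whose content is captured entirely by the elementary sign inequalities $f(y) \leq |f(y)|$ and $f(y) + |f(y)| \geq 0$. The only point requiring a moment's care is the legitimacy of splitting $\LL|f|(x) \pm \LL f(x)$ into a single sum, but the absolute summability furnished by $f\in\ow{F}$ makes this immediate.
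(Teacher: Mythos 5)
Your case analysis is correct: in each of the three cases the potential term either cancels or vanishes, the sign inequalities $f(y)\leq|f(y)|$ and $f(y)+|f(y)|\geq 0$ do the rest, and the absolute summability coming from $f\in\ow{F}$ justifies the term-by-term manipulations. The paper itself gives no proof here --- it records the lemma as ``well-known and easy to check'' with a pointer to Section~5 of \cite{GHKLW} --- so there is nothing to diverge from; for what it is worth, the whole argument can be compressed into one line by rewriting harmonicity as $\bigl(\sum_{y}b(x,y)+c(x)\bigr)f(x)=\sum_{y}b(x,y)f(y)$, taking absolute values, and applying the triangle inequality, which avoids the case split entirely.
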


\begin{corollary}[Maximum principle for uniformly transient graphs] \label{cor-maxprinciple-ut}
Assume $(b,c)$ is uniformly transient and let $f \in \A$ be harmonic. Then,
$$\|f\|_\infty = \|\hat{f}|_{\rH}\|_\infty.$$
\end{corollary}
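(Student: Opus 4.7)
The plan is to use compactness of $R$ together with the identification $\rR = \rH$ coming from uniform transience (Theorem \ref{thm-char uniform transience royden boundary}) and the maximum principle (Lemmas \ref{lem-superharmonic} and \ref{lem-harmonic}).

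First I would note the easy inequality $\|\hat{f}|_{\rH}\|_\infty \leq \|f\|_\infty$. Indeed, $\hat{f}$ is continuous on $R$ and agrees with $f$ on the dense subset $X$, so $\|\hat{f}\|_{\infty,R} = \|f\|_\infty$; restricting to $\rH \subseteq R$ can only decrease the supremum.

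For the reverse inequality, since $R$ is compact and $|\hat{f}|$ is continuous, there exists $z \in R$ with $|\hat{f}(z)| = \|\hat{f}\|_{\infty,R} = \|f\|_\infty$. By Theorem \ref{thm-char uniform transience royden boundary}, uniform transience gives $\rR = \rH$, so either $z \in X$ or $z \in \rH$. In the second case we are immediately done since then $\|\hat{f}|_{\rH}\|_\infty \geq |\hat{f}(z)| = \|f\|_\infty$.

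It remains to handle the case $z \in X$, where the maximum of $|f|$ is attained at an interior vertex. Here I would invoke Lemma \ref{lem-harmonic} to get $\LL |f| \leq 0$ on $X$, and then Lemma \ref{lem-superharmonic} (applied to the non-negative function $|f|$) to conclude that $|f|$ must be constant on $X$, with constant value $\|f\|_\infty$. By continuity of the extension, $\hat{|f|}$ is then constant on all of $R$, in particular on $\rH$, so $\|\hat{f}|_{\rH}\|_\infty = \|f\|_\infty$ as desired. The only slightly delicate point is ensuring the maximum principle is applicable, which is why we first extract the maximising point $z$ from compactness and then split cases — there is no genuine obstacle once the identification $\rR = \rH$ is in hand.
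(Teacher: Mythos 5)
Your proof is correct and follows essentially the same route as the paper: the trivial inequality, compactness of $R$, the identification $\rR=\rH$ from Theorem \ref{thm-char uniform transience royden boundary}, and the combination of Lemmas \ref{lem-harmonic} and \ref{lem-superharmonic}. If anything, your case split is slightly more careful than the paper's one-line argument, since it explicitly handles the possibility that $|f|$ is constant, where Lemma \ref{lem-superharmonic} does not apply but the conclusion is immediate.
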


\begin{proof}
Combining Lemma \ref{lem-superharmonic} and Lemma \ref{lem-harmonic} we conclude
the a harmonic function $f\in \A$ does not attain its maximum on $X$.  As its continuation
$\hat f$ is continuous on the compact set $R,$ it attains its maximum at $\rR$. Since
uniform transience implies $\rH = \rR$, the statement follows.
\end{proof}

\begin{remark}
 The maximum principle shows that on uniformly transient graphs harmonic functions
 in the Royden algebra $\A$ are uniquely determined by their value on the harmonic boundary.
  In  the next section we will prove that an analogous statement holds for general transient
  graphs and harmonic functions in $\ow{D} \cap \ell^\infty(X)$ (instead of $\A$).
\end{remark}

For our subsequent considerations we will need the following
well-known statement, see e.g. Theorem 6.3 in \cite{Soa}.

\begin{proposition}[Royden decomposition for uniformly transient graphs]\label{proposition-decomposition-ut}
Let $(b,c)$ be a connected uniformly transient graph over $X$. Then for any $f\in \ow D$ there
 exist unique $f_0  \in \ow D_0$ and $f_h
\in \ow D$ harmonic with $f = f_0 + f_h$. Moreover, if $f \in \ell^\infty(X)$, then $f_h \in \ell^\infty(X)$.
\end{proposition}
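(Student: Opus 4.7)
The plan is to realize $f_0$ as the $\ow Q$-closest point to $f$ in the subspace $\ow D_0$, and then to read off harmonicity of $f_h := f - f_0$ from the resulting Euler--Lagrange equation $\ow Q(f_h, \varphi) = 0$ for every $\varphi \in \ow D_0$ together with Green's formula.

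First I would verify that $(\ow D_0, \ow Q)$ is a Hilbert space. Uniform transience gives $\ow D_0 \subseteq C_0(X)$, so any $g \in \ow D_0$ with $\ow Q(g) = 0$ must be constant (as $b$ is connected) and vanish at infinity, forcing $g \equiv 0$; hence $\ow Q^{1/2}$ is a norm on $\ow D_0$. Property (ii) of Theorem~\ref{main} yields $|g(o)|^2 \leq C^2 \ow Q(g)$ for $g \in \ow D_0$, so $\aV{\cdot}_o$ and $\ow Q^{1/2}$ are equivalent norms there, and completeness is inherited from Lemma~\ref{lem-properties D0}.

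Next, given $f \in \ow D$, I would choose a minimizing sequence $(u_n) \subseteq \ow D_0$ for $d := \inf \{ \ow Q(f - u)^{1/2} : u \in \ow D_0 \}$. The parallelogram identity in $\ow Q$ applied to $f - u_n$ and $f - u_m$, together with $(u_n + u_m)/2 \in \ow D_0$, yields $\ow Q(u_n - u_m) \leq 2\ow Q(f - u_n) + 2\ow Q(f - u_m) - 4 d^2 \to 0$, so $(u_n)$ is $\ow Q^{1/2}$-Cauchy and converges in $\ow D_0$ to some $f_0$ with $\ow Q(f - f_0)^{1/2} = d$. Setting $f_h := f - f_0 \in \ow D$, the standard variational argument applied to $t \mapsto \ow Q(f_h - t \varphi)$ gives $\ow Q(f_h, \varphi) = 0$ for every $\varphi \in \ow D_0$; taking $\varphi = \delta_x \in C_c(X)$ and invoking Green's formula (Lemma~\ref{lem-green formula}) shows $\LL f_h(x) = 0$ for every $x \in X$.

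For uniqueness, the difference $g$ of two such decompositions would lie in $\ow D_0$ and be harmonic; the ``furthermore'' part of Lemma~\ref{lem-green formula}, applied with both slots equal to $g$, then gives $\ow Q(g) = 0$ and hence $g = 0$ by the first step. For the boundedness statement no further work is needed: under uniform transience $f_0 \in \ow D_0 \subseteq C_0(X) \subseteq \ell^\infty(X)$, so $f_h = f - f_0 \in \ell^\infty(X)$ whenever $f$ is bounded. The main technical point is the first step: the ambient space $\ow D$ need not be complete in $\ow Q^{1/2}$, so one cannot simply apply Hilbert-space projection in $\ow D$ itself; uniform transience is precisely what promotes $\ow Q^{1/2}$ to a complete norm on the subspace $\ow D_0$ and so legitimizes the closest-point construction.
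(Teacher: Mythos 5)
Your proof is correct and follows essentially the same route as the paper: minimize $u\mapsto \ow Q(u-f)$ over $\ow D_0$, read off harmonicity of $f_h$ from the resulting orthogonality $\ow Q(f_h,\varphi)=0$ via Green's formula, and get uniqueness from the same formula. The only difference is cosmetic: you spell out the completeness of $(\ow D_0,\ow Q)$ (which the paper compresses into ``by standard Hilbert space theory'') and you use $\ow D_0\subseteq C_0(X)$ to kill the constant in the uniqueness step where the paper invokes transience via Theorem~\ref{char-transience} --- both are fine.
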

\begin{proof}

{\em Uniqueness:} Assume there exist $g_0,f_0 \in \ow{D}_0$ and harmonic functions
 $g_h,f_h \in \ow{D}$ such that $f = f_0 + f_h = g_0 + g_h$. Since  $g_h - f_h$ is harmonic
  and $f_0 - g_0 \in \ow{D}_0,$ by Lemma \ref{lem-green formula} we obtain
$$0 = \ow{Q}(g_h - f_h, f_0 - g_0) = \ow{Q}(f_0 - g_0).$$
Thus, the connectedness of $(b,c)$ implies that $f_0 - g_0$ is constant. If $c \neq 0$,
then we obtain $f_0 - g_0 = 0$ immediately.  If $c \equiv 0$, then the transience of $(b,0)$
  implies that the only constant function in $\ow{D}_0$ vanishes everywhere
   (see e.g. Theorem~\ref{char-transience}). This shows uniqueness.

{\em Existence:} By standard Hilbert space theory, there exists a minimizer of the
functional $u \mapsto \ow{Q}(u-f)$ on the set $\ow{D}_0$ which we denote by
$f_0$. Now, let $\varphi \in C_c(X)$  and $\varepsilon > 0$ be arbitrary. Then,
since $f_0 + \varepsilon \varphi \in \ow{D}_0$, we obtain
$$ \ow{Q}(f_0-f) \leq \ow{Q}(f_0 + \varepsilon \varphi -f  ) = \ow{Q}(f_0-f) + 2\varepsilon \ow{Q}(f_0 -f,\varphi) + \varepsilon^2 \ow{Q}(\varphi).$$
As $\varepsilon$ and $\varphi$ were arbitrary this shows that $f_h:= f - f_0$ is harmonic.

{\em The ``moreover'' statement:} Assume that $f$ is bounded. Since $(b,c)$ is
 uniformly transient, we have $f_0 \in C_0(X) \subseteq \ell^\infty(X)$ and the
 statement follows from $f_h = f - f_0$. This finishes the proof.
\end{proof}

We will now show that for each function  $\varphi $ in
$$ C_0(\rH) := \{\hat{f}|_\rH :  f \in \A \} $$ the Dirichlet problem
$$\LL f = 0 \text{ on }X \quad \hat f|_\rH = \varphi$$
has a unique solution provided $(b,c)$ is uniformly transient. Let us first identify the space $C_0(\rH)$.

\begin{lemma}
 Let $(b,c)$ be transient. If we equip $\rH$ with the subspace topology and
 denote by $C(\rH)$ its continuous functions the following is true:
 \begin{itemize}
  \item  If $1 \in \A$, the equality $C_0(\rH) = C(\rH)$ holds.
  \item If $1 \not \in \A$, there exists a point $\infty \in \rH$ such that
  $$C_0(\rH) = \{f \in C(\rH) :  f(\infty) = 0\}.$$
 \end{itemize}
\end{lemma}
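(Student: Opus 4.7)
My plan is to pass through the Gelfand representation $B := \A + \lin\{1\} \cong C(R)$, $f \mapsto \hat{f}$, and to locate $\A$ as a subalgebra of $C(R)$. I first record that $\A$ itself is a subalgebra of $\ell^\infty(X)$: a Leibniz-type product estimate shows that $\ow{D} \cap \ell^\infty(X)$ is stable under multiplication, and joint continuity of multiplication on uniformly bounded sets transfers this to the uniform closure $\A$.

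In the case $1 \in \A$, we get $\A = B$, hence $\hat{\A} = C(R)$. Since $\rH$ is closed in the compact Hausdorff space $R$, the Tietze extension theorem yields $C(R)|_\rH = C(\rH)$, and therefore $C_0(\rH) = \hat{\A}|_\rH = C(\rH)$.

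In the case $1 \not\in \A$, I would introduce the linear functional $\alpha : B \to \R$ defined by $\alpha(f + t \cdot 1) := t$ for $f \in \A$, $t \in \R$. Since $\A$ is closed in $B$ and $1 \not\in \A$, we have $\mathrm{dist}(1,\A) > 0$, which gives continuity of $\alpha$. A direct expansion of $(f_1 + t_1 \cdot 1)(f_2 + t_2 \cdot 1)$ using that $\A$ is a subalgebra shows that $\alpha$ is multiplicative. By Gelfand theory, $\alpha$ corresponds to evaluation at some point $\infty \in R$, and $\A = \{h \in B : \hat{h}(\infty) = 0\}$. Since $\chi_x \in C_c(X) \subseteq \A$ satisfies $\chi_x(x) = 1$, the point $\infty$ cannot coincide with any $x \in X$, so $\infty \in \rR$; and since $\ow{D}_0 \cap \ell^\infty(X) \subseteq \A$, every such function vanishes at $\infty$, placing $\infty$ in $\rH$ by the very definition of the harmonic boundary.

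The identification $C_0(\rH) = \{f \in C(\rH) : f(\infty) = 0\}$ then follows from two inclusions. The forward one is immediate from $\A = \{h \in B : \hat{h}(\infty) = 0\}$. For the reverse, given $g \in C(\rH)$ with $g(\infty) = 0$, I extend $g$ to some $G \in C(R)$ by Tietze (automatically $G(\infty) = 0$ since $\infty \in \rH$); the Gelfand preimage $h \in B$ then satisfies $\hat{h}(\infty) = 0$, so $h \in \A$ and $\hat{h}|_\rH = g$. The main obstacle is the non-unital case: pinning down $\A$ as the kernel of a character requires the continuity and multiplicativity of $\alpha$, and these are the decisive technical checks; once they are in place, everything else is a routine application of Gelfand duality and the Tietze extension theorem.
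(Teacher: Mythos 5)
Your proof is correct, and in the non-unital case it takes a genuinely different route from the paper. The paper locates $\infty$ via Stone--Weierstrass applied to $\hat{\A}$ inside $C(R)$: if the functions in $\A$ had no common zero, the closed subalgebra $\hat{\A}$ would separate points and vanish nowhere, hence equal $C(R)$ and contain $1$; and separation forbids two common zeros, so there is exactly one point $\infty$, which lies in $\rH$ because $\ow{D}_0\cap\ell^\infty(X)\subseteq \A$. You instead exhibit $\A$ as the kernel of the quotient character $\alpha(f+t\cdot 1)=t$ on $B=\A\oplus\lin\{1\}$ and invoke Gelfand duality to identify $\alpha$ with evaluation at a point $\infty\in R$. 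The two arguments are close cousins (your $B\cong C(R)$ already rests on the Stone--Weierstrass remark preceding the lemma), but yours buys something concrete: it delivers the identity $\hat{\A}=\{F\in C(R): F(\infty)=0\}$ explicitly, which is exactly what the reverse inclusion $\{g\in C(\rH): g(\infty)=0\}\subseteq C_0(\rH)$ needs after the Tietze extension, and which the paper's proof uses but leaves implicit. The price is the extra bookkeeping you correctly flag as the decisive checks: that $\A$ is a closed subalgebra (the Leibniz-type estimate), that the decomposition $B=\A\oplus\lin\{1\}$ is direct so $\alpha$ is well defined, and that $\alpha$ is continuous and multiplicative. You also spell out, via the characteristic functions $\chi_x\in C_c(X)\subseteq\A$, why $\infty$ cannot lie in $X$ -- a point the paper asserts ($\infty\in\rR$) without comment. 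Both proofs then conclude identically with Tietze on the compact set $\rH$.
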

\begin{proof}
 The inclusion $C_0(\rH) \subseteq C(\rH)$ is obviously satisfied.  As $\rH$ is compact
 each function in $C(\rH)$ can be extended to a function in $C(R)$ by Tietze's extension
  theorem.  If $1 \in \A$, the algebra  $\A$ is isomorphic to $C(R)$ and the equality $C_0(\rH) = C(\rH)$
  follows. If $1 \not \in \A$, the functions in $\A$ vanish at exactly one point $\infty \in \rR$
  (as otherwise, since $\A$ separates  points, the Stone-Weierstrass theorem would imply
  $1 \in \A$). Since $\ow{D}_0 \subseteq \A$, we obtain $\infty \in \rH$. This finishes the proof.
\end{proof}

\begin{theorem}[The DP on uniformly transient graphs] \label{thm-DP ut graphs}
 Assume $(b,c)$ is connected and uniformly transient. For each $\varphi \in C_0(\rH)$ the equation
 $$\LL f = 0 \text{ on }X \quad \hat{f}|_{\rH} = \varphi$$
 has a unique solution $f_\varphi \in \A$. Furthermore, the mapping $C_0(\rH) \to \A,$ $\varphi \mapsto f_\varphi$ is an isometry.
\end{theorem}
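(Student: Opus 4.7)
The plan is to handle uniqueness via the maximum principle and to construct the solution by approximation, using the Royden decomposition to extract harmonic parts from a uniformly approximating sequence in $\ow{D} \cap \ell^\infty(X)$. The isometry will then fall out of the maximum principle applied to $f_\varphi$ itself.

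For uniqueness, if $f_1, f_2 \in \A$ both solve the problem, then $f_1 - f_2 \in \A$ is harmonic with $\widehat{f_1 - f_2}|_{\rH} = 0$, so Corollary~\ref{cor-maxprinciple-ut} forces $f_1 = f_2$. This also makes $\varphi \mapsto f_\varphi$ linear once existence is established.

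For existence, by the definition of $C_0(\rH)$ I fix $f \in \A$ with $\hat{f}|_{\rH} = \varphi$, and by the definition of $\A$ a sequence $(f_n)$ in $\ow{D} \cap \ell^\infty(X)$ converging uniformly to $f$. Proposition~\ref{proposition-decomposition-ut} decomposes $f_n = g_n + h_n$ with $g_n \in \ow{D}_0$ and $h_n \in \ow{D}$ harmonic, and its ``moreover'' part guarantees $h_n \in \ell^\infty(X)$, so $h_n \in \A$. Uniform transience yields $g_n \in \ow{D}_0 \subseteq C_0(X)$, hence $\hat{g}_n$ vanishes on $\rH$ by the definition of the harmonic boundary, giving $\hat{h}_n|_{\rH} = \hat{f}_n|_{\rH}$. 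Applying Corollary~\ref{cor-maxprinciple-ut} to the harmonic function $h_n - h_m \in \A$ then yields
\[
\|h_n - h_m\|_\infty = \|(\hat{f}_n - \hat{f}_m)|_{\rH}\|_\infty \leq \|f_n - f_m\|_\infty,
\]
so $(h_n)$ is uniformly Cauchy and its limit $f_\varphi$ lies in the closed subalgebra $\A$.

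It remains to check that $f_\varphi$ solves the Dirichlet problem and that $\varphi \mapsto f_\varphi$ is an isometry. Since $\sum_y b(x,y) < \infty$, the uniform convergence $h_n \to f_\varphi$ together with dominated convergence transfers the equation $\LL h_n(x) = 0$ to $\LL f_\varphi(x) = 0$ for each $x \in X$. The uniform convergence on $X$ lifts (via the isometric Gelfand extension into $C(R)$) to uniform convergence $\hat{h}_n \to \hat{f}_\varphi$ on $R$, so $\hat{f}_\varphi|_{\rH} = \lim \hat{h}_n|_{\rH} = \hat{f}|_{\rH} = \varphi$. Finally, Corollary~\ref{cor-maxprinciple-ut} applied to the harmonic function $f_\varphi \in \A$ gives $\|f_\varphi\|_\infty = \|\varphi\|_\infty$, which is the isometry. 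I expect the main subtlety to lie not in any single estimate but in the pervasive use of uniform transience: it is needed to place $g_n$ in $C_0(X)$ so that $\hat{g}_n$ vanishes on $\rH$, it is needed via $\rH = \rR$ to sharpen the maximum principle into the norm equality, and it is needed for the Royden decomposition to interact cleanly with boundedness; without any one of these the uniform Cauchy estimate that drives the whole construction collapses.
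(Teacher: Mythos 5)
Your proof is correct, and its skeleton is the same as the paper's: uniqueness and the isometry come from the maximum principle (Corollary~\ref{cor-maxprinciple-ut}), and existence comes from extracting harmonic parts via the Royden decomposition (Proposition~\ref{proposition-decomposition-ut}) and passing to a uniform limit controlled by the maximum principle. The one genuine difference is in how the approximating sequence is produced. The paper first solves the problem for the class $G$ of traces of functions in $\ow{D}\cap\ell^\infty(X)$ and then invokes the Stone--Weierstrass theorem to show $G$ is dense in $C_0(\rH)$; this is where the paper needs the case distinction on whether $1\in\A$ (to know that $G$ vanishes nowhere, respectively to locate the point $\infty$). You instead observe that, by definition, every $\varphi\in C_0(\rH)$ is the trace of some $f\in\A$, and that $\A$ is by definition the uniform closure of $\ow{D}\cap\ell^\infty(X)$, so a uniformly approximating sequence $(f_n)\subseteq\ow{D}\cap\ell^\infty(X)$ with $\hat f_n|_{\rH}\to\varphi$ exists for free; the isometry of the Gelfand extension then turns uniform convergence on $X$ into uniform convergence of the traces. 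This removes Stone--Weierstrass from the argument entirely and treats the cases $1\in\A$ and $1\notin\A$ uniformly, at no cost: all the remaining steps (vanishing of $\hat g_n$ on $\rH$ via uniform transience, the Cauchy estimate $\|h_n-h_m\|_\infty\leq\|f_n-f_m\|_\infty$, dominated convergence for $\LL$) are carried out exactly as in the paper. A modest but real simplification.
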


\begin{proof}
We will only treat the case where $1 \in \A$. The other case can be treated similarly.

{\em Uniqueness and the isometry property:} This is an immediate consequence of the
maximum principle for uniformly transient graphs, Corollary \ref{cor-maxprinciple-ut}.

{\em Existence: } Consider the set  $G \subseteq C_0(\rH)$ of functions $\varphi$ for
which there exists $f \in \ow{D} \cap \ell^\infty(X)$ such that $\hat{f} = \varphi$ on $\rH$.

 {\em Step 1:} For each $\varphi \in G$ there exists a solution to the boundary value problem.

  {\em Proof of Step 1:}  Assume that $\varphi = \hat f$ with $f \in \ow{D} \cap \ell^\infty(X)$.
  By the Royden decomposition, Proposition~\ref{proposition-decomposition-ut}, the function
  $f$ has a unique decomposition $f = f_0 + f_h$ with $f_0  \in \ow{D}_0$ and $f_h \in \ow{D}\cap \ell^\infty(X)$
   harmonic.  Since $\ow{D}_0$ vanishes on $\rH$, we obtain $\varphi = \hat{f} = \hat{f}_h$ on $\rH$
   and the claim follows.

  {\em Step 2:} Suppose that $f_n \in \ow{D}\cap \ell^\infty (X)$ solves $\LL f_n = 0$  and
  $\hat f _n |_{\rH}= \varphi_n$. Furthermore, suppose $\varphi_n \to \varphi$ uniformly as
   $n \to \infty.$ Then, $(f_n)$ converges uniformly to $f \in \A$ that solves $\LL f = 0$ and
    $\hat f |_{\rH}= \varphi.$

 {\em Proof of Step 2:} By the maximum principle, Corollary \ref{cor-maxprinciple-ut}, we have
 $$\|f_n - f_m \|_\infty  = \|\varphi_n - \varphi_m\|_\infty \to 0$$
 which implies the uniform convergence of $(f_n)$ to some $f \in \ell^\infty(X)$. We obtain
 $f \in \A$ by the definition of $\A$ and $\hat f = \varphi$ on $\rH$ by the uniform convergence.
  It follows from Lebesgue's theorem of dominated convergence that $\LL f_n \to \LL f.$
  This finishes the proof of Step 2.

 We can now conclude the existence part as follows: Since $\ow{D} \cap \ell^\infty(X)$
 separates the points of $R$, the set $G$ separates the points of $\rH$. Furthermore,
 as $1 \in \A$, it vanishes nowhere. Thus, by the Stone-Weierstrass theorem, $G$ is
  dense in $C(\rH) = C_0(\rH)$ and the statement follows by combining Step~1 and Step~2.
\end{proof}

\begin{remark}
Let us put the above theorem into the perspective of the existing literature.  Even though the
existence of solutions to the Dirichlet problem is well known, see e.g.  Theorem 6.47 in \cite{Soa},
uniqueness statements for the Dirichlet problem for large classes of graphs seem to be rather new,
confer \cite{GHKLW} for the corresponding result for canonically compactifiable graphs.
We would also like to emphasize the simplicity of our arguments compared to the
discussion in \cite{Soa} which is based on harmonic measures on $\rH$. Combining
the Royden decomposition (based on Hilbert space arguments) and a maximum principle
(based on the compactness of $X \cup \rH$ in the uniformly transient setting) together with
the Stone-Weierstrass theorem already yields existence. In the next section we will also
demonstrate how to use this method  for arbitrary transient graphs. However, one needs
to exercise some more care in this case when proving a maximum principle as $\rH$ might
be strictly smaller than $\rR$.
\end{remark}

\section{The Dirichlet problem on general
graphs}\label{DP-general-graphs}

In this section we show how to solve the Dirichlet problem for arbitrary connected
graphs $(b,c)$ over $X$ by only using a maximum principle and the Royden decomposition.

\begin{proposition}[Royden decomposition]\label{proposition-decomposition}
Let $(b,c)$ be a connected transient graph over $X$. Then,
to any $f\in \ow D$, there exist unique $f_0  \in \ow D_0$ and $f_h
\in \ow D$ harmonic with $f = f_0 + f_h$. Moreover, if $-a \leq f \leq b$ for
some $a,b \geq 0$ then $-a\leq f_h \leq b$. In particular, if $f \in \ell^\infty(X)$, then $f_h \in \ell^\infty(X)$.
\end{proposition}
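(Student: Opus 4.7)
For existence and uniqueness of the decomposition the plan is to repeat the argument of Proposition~\ref{proposition-decomposition-ut}. The only point where uniform transience was used there was in ensuring that $(\ow D_0,\ow Q)$ is a Hilbert space, and this is replaced here by the characterization of transience from Appendix~\ref{Transience} (already invoked in the Remark after Lemma~\ref{l:equivalence}), which says that the norms $\aV{\cdot}_o$ and $\ow Q^{1/2}$ are equivalent on $C_c(X)$ precisely when $(b,c)$ is transient. Hence $\ow D_0$ is complete in $\ow Q^{1/2}$, Hilbert projection produces a unique minimizer $f_0\in \ow D_0$ of $u\mapsto \ow Q(f-u)$, and the variational equation $\ow Q(f-f_0,\varphi)=0$ for $\varphi\in C_c(X)$ combined with Green's formula (Lemma~\ref{lem-green formula}) gives that $f_h:=f-f_0$ is harmonic. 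The uniqueness argument of Proposition~\ref{proposition-decomposition-ut} carries over verbatim, using again transience to conclude that a constant in $\ow D_0$ must vanish.

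For the bound $-a\leq f_h\leq b$ the plan is to approximate $f_0$ by finite-dimensional truncations and exploit a discrete maximum principle. Fix an increasing sequence $(X_n)$ of finite connected subsets with $\bigcup_n X_n = X$ and set $V_n := \{\varphi\in C_c(X): \supp\varphi\subseteq X_n\}$. For each $n$ the functional $\varphi\mapsto \ow Q(f-\varphi)$ is a positive definite quadratic on the finite-dimensional space $V_n$ and hence attains a unique minimum at some $\varphi_n\in V_n$; setting $u_n := f - \varphi_n$, the variational identity combined with Green's formula gives $\LL u_n = 0$ on $X_n$, while $u_n = f$ on $X\setminus X_n$.

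A discrete maximum principle then yields $-a\leq u_n\leq b$ on all of $X$. Indeed, suppose for contradiction that $M := \sup_X u_n > b$; since $f\leq b$ on $X\setminus X_n$, the supremum is attained in $X_n$, at some $x_0$ say. At $x_0$ the equation $\LL u_n(x_0)=0$ is a sum of the manifestly nonnegative terms $b(x_0,y)(M-u_n(y))$ and $c(x_0)M$, so as $M>0$ this forces $c(x_0)=0$ and $u_n(y)=M$ for every neighbor $y$ of $x_0$. Iterating, the set $S:=\{x\in X:u_n(x)=M\}$ is closed under taking neighbors. Since $u_n\leq b<M$ on $X\setminus X_n$, we have $S\subseteq X_n$, so $X\setminus S\supseteq X\setminus X_n$ is nonempty; connectedness of the ambient graph $X$ then produces an edge from $S$ to $X\setminus S$, contradicting neighborhood-closedness of $S$. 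The lower bound is obtained analogously by applying the argument to $-u_n$.

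It remains to pass to the limit $n\to\infty$. The union $\bigcup_n V_n = C_c(X)$ is dense in $\ow D_0$ with respect to $\aV{\cdot}_o$, hence by transience also with respect to $\ow Q^{1/2}$. The standard Hilbert space fact that orthogonal projections onto an increasing family of closed subspaces with dense union converge strongly then gives $\varphi_n\to f_0$ in $\ow Q^{1/2}$-norm, and the equivalence of norms together with continuity of point evaluation (Lemma~\ref{l:pointevaluation}) yields $\varphi_n(x)\to f_0(x)$ pointwise for every $x\in X$. Consequently $u_n(x)\to f_h(x)$ pointwise, and the uniform bound $-a\leq u_n\leq b$ passes to the limit. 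The main obstacle in this plan is the discrete maximum principle above: it is crucial to exploit connectedness of the \emph{ambient} graph $X$ and not merely of $X_n$, and to handle the killing term $c$ carefully so that it splits off cleanly at a positive interior maximum.
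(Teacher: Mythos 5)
Your existence and uniqueness argument coincides with the paper's (which simply refers back to Proposition~\ref{proposition-decomposition-ut} and, as you correctly diagnose, only needs transience so that $\ow Q^{1/2}$ is a complete norm on $\ow D_0$). For the bound $-a\leq f_h\leq b$, however, you take a genuinely different and correct route. The paper's proof is a short cut-off argument: it picks $(\varphi_n)\subseteq C_c(X)$ with $\ow Q(\varphi_n-f)\to\ow Q(f_h)$, observes that $((f-\varphi_n)\vee(-a))\wedge b=f-\psi_n$ with $\psi_n$ compactly supported (this is where $a,b\geq 0$ enters, so that $t\mapsto (t\vee(-a))\wedge b$ is a normal contraction), deduces $\ow Q(\psi_n-f)\to\ow Q(f_h)$ and hence $\psi_n\to f_0$ in energy and then pointwise by transience, and passes the bound $-a\leq f-\psi_n\leq b$ to the limit. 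You instead run a finite exhaustion: minimize over $V_n$, identify $u_n=f-\varphi_n$ as the solution of a finite Dirichlet problem ($\LL u_n=0$ on $X_n$, $u_n=f$ outside), prove a discrete maximum principle by propagating the maximum set $S\subseteq X_n$ through the connected ambient graph, and recover $f_h$ as the pointwise limit via strong convergence of the projections $P_{V_n}$. Both arguments use transience at the same spot, namely to upgrade $\ow Q$-convergence to pointwise convergence. What the paper's approach buys is brevity and the fact that it never needs harmonicity of the approximants; what yours buys is a self-contained maximum principle in the spirit of Lemma~\ref{lem-superharmonic} and an explicit Perron-type construction of $f_h$ that would also give monotonicity information. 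The only step you should spell out is the identification $\varphi_n=P_{V_n}f_0$: since $\ow Q(f-\varphi)=\ow Q(f_h)+\ow Q(f_0-\varphi)$ for $\varphi\in\ow D_0$ (by the orthogonality of $f_h$ to $\ow D_0$ from Lemma~\ref{lem-green formula}), minimizing $\ow Q(f-\varphi)$ over $V_n$ is the same as projecting $f_0$ onto $V_n$ in $(\ow D_0,\ow Q)$; with that line added, the limit passage is complete.
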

\begin{proof}

Existence and uniqueness of the decomposition can be proven as in the
proof of the Royden decomposition, Proposition~\ref{proposition-decomposition-ut}.

{\em The ``moreover'' statement:} Suppose $-a \leq f \leq b$ for some $a,b \in \R$.
Since $C_c(X)$ is dense in $\ow{D}_0$ with respect to $\ow{Q}$ and by the construction
of $f_h$ (see proof of the Royden decomposition, Proposition~\ref{proposition-decomposition-ut}),
there exists a sequence $(\varphi_n) \subseteq C_c(X)$ such that
$$\ow{Q}(\varphi_n - f) \to \ow{Q}(f_h), \text{ as } n \to \infty.$$
Since we assumed the bound $-a\leq f\leq b,$ the equality $((f - \varphi_n)\vee (-a))\wedge b = f - \psi_n $
holds for some compactly supported function $\psi_n$. By the cut-off property of $\ow{Q}$ we obtain
$$\ow{Q}(\varphi_n - f) \geq  \ow{Q}(\psi_n - f) = \ow{Q}(\psi_n - f_0) + \ow{Q}(f_h) \geq \ow{Q}(f_h),$$
showing the convergence $\psi_n \to f_0$ with respect to $\ow{Q}$. By transience, this implies
 pointwise convergence $\psi_n \to f_0,$  see Theorem \ref{char-transience}. As by construction t
 he inequalities $-a \leq f - \psi_n \leq b$ hold, we obtain the statement.
\end{proof}

\begin{remark}
 In \cite{Soa} the above proposition is stated without the positivity assumption on $a,b$.
  However, as we deal with possibly non vanishing potentials $c$, we need to assume $a,b\geq 0$
   to ensure the inequality $$\ow{Q}( (f \vee (-a))\wedge b) \leq \ow{Q}(f).$$
\end{remark}

The following proposition is a variant of Theorem 6.7 in \cite{Soa}.

\begin{proposition} \label{proposition-lowerbound} Assume $(b,c)$ is transient.
Let $f\in \ow{D} \cap \ell^\infty(X)$ be harmonic and assume  $\hat{f} \geq - C$
on $\partial_h X$ for some $C\geq 0$. Then, $f \geq -C$ on $X$.
\end{proposition}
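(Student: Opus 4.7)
The plan is to test the form $\ow{Q}$ against the cut-off $w := (-C-f)_+ = (f+C)_-$, which belongs to $\ow{D}\cap \ell^\infty(X)$ by the cut-off property of $\ow Q$ applied to the normal contraction $t \mapsto (-C-t)_+$ and the boundedness of $f$. The Royden extension $\hat w = (-C-\hat f)_+$ vanishes on $\partial_h X$ by hypothesis. My intention is then to establish $w \in \ow D_0$, apply Green's formula, and finish via a combinatorial case analysis on $S := \{x \in X : f(x) < -C\} = \{w > 0\}$.

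The principal task is to show $w \in \ow{D}_0$. By Proposition~\ref{proposition-decomposition}, I would write the Royden decomposition $w = w_0 + w_h$ with $w_0 \in \ow{D}_0$ and $w_h \in \ow{D} \cap \ell^\infty(X)$ harmonic; the ``moreover'' clause gives $w_h \geq 0$ since $w \geq 0$. As functions in $\ow{D}_0 \cap \ell^\infty(X)$ vanish on $\partial_h X$ by definition of the harmonic boundary, $\hat w_h = 0$ on $\partial_h X$. To conclude $w_h \equiv 0$, I would argue that a non-negative, bounded, harmonic $w_h$ whose continuous extension vanishes on $\partial_h X$ must be identically zero: by the averaging property of harmonic functions and connectedness, either $w_h \equiv 0$ or $w_h > 0$ everywhere on $X$. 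The latter possibility is to be excluded using compactness of the Royden compactification $R$, Lemma~\ref{lem-superharmonic} applied to the superharmonic $w_h$ (so that $\hat w_h$ attains its supremum on $\partial_R X$), together with a compactness/Stone--Weierstrass argument exploiting the fact that $\ow{D}_0 \cap \ell^\infty(X)$ separates the points of $R \setminus \partial_h X$. This step is the main technical obstacle.

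Granted $w \in \ow{D}_0$, Green's formula (Lemma~\ref{lem-green formula}) applied to the harmonic $f$ and test function $w$ gives $\ow{Q}(f,w) = 0$. I would then expand the defining double sum and split edges by position relative to $S$; this produces three manifestly non-positive contributions: the quadratic internal term $-\tfrac12 \sum_{x,y \in S} b(x,y)(f(x)-f(y))^2$ (using $w(x) - w(y) = -(f(x)-f(y))$ on $S$); the mixed edge term $\sum_{x \in S,\, y \notin S} b(x,y)(f(x)-f(y))(-C-f(x))$ (using $f(x)-f(y)<0$ and $-C-f(x)>0$); and the potential term $\sum_{x \in S} c(x) f(x)(-C-f(x))$ (using $f(x)<0$ and $-C-f(x)>0$). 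The vanishing of their sum forces each contribution to vanish separately, which in turn forces $f$ to be constant on each connected component of $S$, the absence of edges between $S$ and $X\setminus S$, and $c\equiv 0$ on $S$. Connectedness of $(b,c)$ then leaves $S = \emptyset$, giving $f \geq -C$, or $S = X$; the latter would mean $f$ equals a constant $f_0 < -C$ on $X$ with $c\equiv 0$, so $\hat f \equiv f_0 < -C$ on all of $R$ including $\partial_h X$, contradicting the hypothesis (using that transience forces $\partial_h X \neq \emptyset$).
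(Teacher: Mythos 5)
Your reduction hinges on the claim that a non-negative, bounded harmonic function $w_h$ of finite energy whose extension vanishes on $\partial_h X$ must be identically zero, and this is exactly where the proof breaks down: up to a sign, that claim is the $C=0$ case of the very proposition you are proving (it is the maximum principle, Corollary \ref{cor-maxprinciple}, which the paper derives \emph{from} Proposition \ref{proposition-lowerbound}), so you have reduced the statement to a special case of itself without proving that case. The tools you cite do not close it. Lemma \ref{lem-superharmonic} only places the supremum of $\hat w_h$ on the Royden boundary $\partial_R X$, which in general strictly contains $\partial_h X$, and nothing is known a priori about $\hat w_h$ on $\partial_R X \setminus \partial_h X$. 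The definition of $\partial_h X$ gives you, for each point $x$ outside it, merely \emph{some} $g_x \in \ow D_0 \cap \ell^\infty(X)$ with $\hat g_x(x) \neq 0$; the missing idea --- and the actual content of the paper's proof --- is how to upgrade this pointwise non-vanishing to a usable bound. The paper does so by taking the compact set $F = \{x \in R : \hat f(x) + \varepsilon \leq -C\}$, which is disjoint from $\partial_h X$ by hypothesis, covering it by finitely many neighborhoods on which the $\hat g_{x_i}$ stay positive, and cutting off the normalized sum to obtain $g \in \ow D_0 \cap \ell^\infty(X)$ with $0 \leq g \leq 1$ and $g \geq 1$ on $F$; then $f + \|f\|_\infty g \geq -C - \varepsilon$ on all of $X$, and since $f$ is the harmonic part of this function, the sign-preserving ``moreover'' clause of Proposition \ref{proposition-decomposition} yields $f \geq -C - \varepsilon$. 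Your appeal to ``a compactness/Stone--Weierstrass argument'' points in this direction, but this construction is precisely the step you have not supplied.

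Two smaller points. First, the assertion that transience forces $\partial_h X \neq \emptyset$ is false in general: it fails when $c \in \ell^1(X)$, $c \not\equiv 0$ and $(b,0)$ is recurrent. It does hold in the sub-case where you have already derived $c \equiv 0$, but that needs to be said. Second, granted $w := (-C-f)_+ \in \ow D_0$, your Green's-formula case analysis is correct but superfluous: $f + w \geq -C$ and $f$ is the harmonic part of $f+w$, so the ``moreover'' clause of Proposition \ref{proposition-decomposition} gives $f \geq -C$ in one line.
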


\begin{proof}
   For $\varepsilon > 0$ let  $F := \{x \in R \, : \, \hat{f}(x) + \varepsilon \leq -C\}.$
   By our assumptions we have $F \cap \rH = \emptyset.$ Thus, for each $x \in F$,
   there exists a function $g_x \in \ow{D}_0 \cap \ell^\infty(X),$ such that $\hat{g}_x(x) \neq 0$.
   By Lemma \ref{lem-properties D0} we have $|g_x| \in \ow{D}_0$. Thus, we may
   assume $\hat{g}_x \geq 0$ on $R$.  Furthermore,  let $U_x$ be a neighborhood of
   $x$, such that $\hat{g}_x(y) > 0$ for each $y \in U_x$. Obviously, $F$ is closed and,
   hence, compact. Thus, there exist finitely many points $x_i \in F$  such that the corresponding $U_{x_i}$ cover $F$. With
 $$\ow{g} = \sum_i \hat{g}_{x_i}, $$
 we let $\alpha = \inf\{ \ow{g}(x)\, : \, x \in F\}$ and set $g = \min\{1, \alpha^{-1}\ow{g}\}$.
 Then, clearly, $g \geq 1$ on $F$ and by Lemma \ref{lem-properties D0} the restriction of
  $g$ to $X$ belongs  to $\ow{D}_0 \cap \ell^\infty(X)$. By choice of the set $F$ we obtain
 $$f + \|f\|_\infty g \geq  -\varepsilon - C \text{ on } X.$$
  Applying the Royden decomposition, Proposition~\ref{proposition-decomposition}
  to the function $f + \|f\|_\infty  g$ and noting that $f$ is its harmonic part, we obtain
  $f \geq  -\varepsilon - C$. Since $\varepsilon$ was arbitrary, the claim follows.
\end{proof}

\begin{corollary}[Maximum principle] \label{cor-maxprinciple}
Assume $(b,c)$ is transient and that the harmonic boundary is non-empty.
Then, for each harmonic $f \in \ow{D} \cap \ell^\infty(X)$ the equality
$$\|f\|_\infty = \|\hat{f}|_{\rH}\|_\infty$$
holds.
\end{corollary}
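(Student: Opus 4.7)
The inequality $\|\hat{f}|_{\rH}\|_\infty \leq \|f\|_\infty$ is essentially for free: since $X$ is dense in $R$ and $\hat{f}$ is continuous, we have $\|\hat{f}\|_{C(R)} = \|f\|_\infty$, and restriction to the closed subset $\rH$ can only decrease the supremum. So the content of the statement is the reverse inequality $\|f\|_\infty \leq \|\hat{f}|_{\rH}\|_\infty$.

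For that, my plan is to apply Proposition \ref{proposition-lowerbound} twice, once to $f$ and once to $-f$. Set $C := \|\hat{f}|_{\rH}\|_\infty$, which is finite because $f$ is bounded and $\hat{f}$ continuous on compact $R$. Since $\hat{f} \geq -C$ on $\rH$ by definition of $C$, Proposition \ref{proposition-lowerbound} yields $f \geq -C$ on $X$. Observe next that $-f$ also lies in $\ow{D} \cap \ell^\infty(X)$ and is harmonic (the operator $\LL$ is linear, and $\ow{D}$ is a vector space), and that $\widehat{-f} = -\hat{f} \geq -C$ on $\rH$ again by the definition of $C$. A second application of Proposition \ref{proposition-lowerbound} therefore gives $-f \geq -C$ on $X$, i.e., $f \leq C$ on $X$. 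Combining both bounds, $|f| \leq C$ on $X$, hence $\|f\|_\infty \leq C = \|\hat{f}|_{\rH}\|_\infty$, which together with the trivial direction finishes the proof.

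The only point one should double-check is that the hypotheses of Proposition \ref{proposition-lowerbound} really apply in both invocations: we need $f, -f \in \ow{D} \cap \ell^\infty(X)$ harmonic with a lower bound on $\rH$. All three properties are immediate. The non-emptiness of $\rH$ assumed in the statement ensures that $C$ is a well-defined real number (as a supremum over a non-empty set, although one could extend the convention in the empty case). I do not foresee any genuine obstacle; the real work has already been carried out in Proposition \ref{proposition-lowerbound}, whose proof uses the Royden decomposition together with a partition-of-unity-type argument around the compact set $\{\hat{f} + \varepsilon \leq -C\} \subseteq R \setminus \rH$. The corollary is then a short symmetrization of that one-sided statement.
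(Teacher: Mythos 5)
Your argument is correct and is exactly the route the paper takes: the paper states that the corollary is a direct consequence of Proposition~\ref{proposition-lowerbound}, and the intended derivation is precisely your symmetrization (applying the proposition to $f$ and to $-f$ with $C = \|\hat{f}|_{\rH}\|_\infty$), together with the trivial inequality $\|\hat{f}|_{\rH}\|_\infty \leq \|f\|_\infty$ coming from the density of $X$ in $R$ and the continuity of $\hat{f}$.
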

\begin{proof}
The statement is a direct consequence of  Proposition~\ref{proposition-lowerbound}.
\end{proof}

\begin{remark}
 Note that, in general, the maximum principle holds for harmonic functions in
 $\ow{D}\cap \ell^\infty(X)$ only. Indeed, the failure of the maximum principle for functions
 in $\A$ may lead to non uniqueness of solutions to the Dirichlet problem.
\end{remark}

It may happen that $\rH = \emptyset$ even if $(b,c)$ is transient. This is due to the fact
that graphs $(b,c)$ with non vanishing potential $c$ are always transient, see Theorem \ref{char-transience}.
Indeed, the following characterization holds.

\begin{proposition}
 Assume $(b,c)$ is connected. Then, the following assertions are equivalent.
 \begin{itemize}
  \item[(i)] $\rH = \emptyset$.
  \item[(ii)] $1 \in \ow{D}_0$.
  \item[(iii)] $c \in \ell^1(X)$ and $(b,0)$ is recurrent.
 \end{itemize}
\end{proposition}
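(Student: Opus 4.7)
The plan is to establish (ii) $\Longleftrightarrow$ (iii) via the splitting $\widetilde{Q} = \widetilde{Q}_{(b,0)} + \sum_x c(x)|\cdot|^2$ on $\widetilde{D}$, and then to connect this to (i) through separate arguments for (ii) $\Longrightarrow$ (i) (which is essentially free) and (i) $\Longrightarrow$ (ii) (which carries all the content).

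For (iii) $\Longrightarrow$ (ii), recurrence of $(b,0)$ supplies $(\varphi_n) \subseteq C_c(X)$ with $\varphi_n \to 1$ pointwise and $\widetilde{Q}_{(b,0)}(\varphi_n - 1) \to 0$; applying the normal contraction $t \mapsto (t \vee 0) \wedge 1$ via the cut-off property lets me assume $0 \leq \varphi_n \leq 1$. Since $c \in \ell^1(X)$, dominated convergence then controls the killing term $\sum_x c(x)|\varphi_n(x) - 1|^2$, and together with $\varphi_n(o) \to 1$ this gives $\|\varphi_n - 1\|_o \to 0$, hence $1 \in \widetilde{D}_0$. The reverse (ii) $\Longrightarrow$ (iii) runs along the same split: $1 \in \widetilde{D}_0$ forces $\widetilde{Q}(1) = \sum_x c(x) < \infty$, and any approximating sequence for $1$ in $\widetilde{D}_0$ automatically witnesses $1 \in \widetilde{D}_0^{(b,0)}$, which is recurrence of $(b,0)$ since $\widetilde{Q}_{(b,0)}(1) = 0$.

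The implication (ii) $\Longrightarrow$ (i) is immediate: if $1 \in \widetilde{D}_0 \cap \ell^\infty(X)$, its unique continuous extension to $R$ is identically $1$ by density of $X$, so no point of $\partial_R X$ can lie in $\partial_h X$.

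The main obstacle is (i) $\Longrightarrow$ (ii), which I would split in two. First, if $c \notin \ell^1(X)$ then $1 \notin \A$; the Stone--Weierstrass argument from the proof of the lemma identifying $C_0(\partial_h X)$ then produces a single point $\infty \in \partial_R X$ at which every function of $\A$ vanishes, and since $\widetilde{D}_0 \cap \ell^\infty(X) \subseteq \A$ this forces $\infty \in \partial_h X$, contradicting (i). Hence $c \in \ell^1(X)$ and in particular $1 \in \widetilde{D}$. If $(b,c)$ is already recurrent, Theorem~\ref{char-transience} forces $c \equiv 0$ and $1 \in \widetilde{D}_0$, so assume $(b,c)$ is transient. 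Apply the Royden decomposition (Proposition~\ref{proposition-decomposition}) to write $1 = 1_0 + 1_h$ with $1_0 \in \widetilde{D}_0$ and $1_h \in \widetilde{D}$ harmonic; by the ``moreover'' clause $0 \leq 1_h \leq 1$, so $1_h$ is bounded. Now apply Proposition~\ref{proposition-lowerbound} to $-1_h$ with $C = 0$---where the hypothesis ``$\hat{f} \geq -C$ on $\partial_h X$'' is vacuous precisely because $\partial_h X = \emptyset$---to conclude $-1_h \geq 0$; combined with $1_h \geq 0$ this gives $1_h \equiv 0$, so $1 = 1_0 \in \widetilde{D}_0$, completing the cycle.
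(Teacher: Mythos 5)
Your proof is correct. The implications (ii)$\,\Leftrightarrow\,$(iii) and (ii)$\,\Rightarrow\,$(i) coincide with the paper's argument (splitting $\ow Q$ into the energy of $(b,0)$ plus the killing term, dominated convergence via $c\in\ell^1(X)$, and the triviality of $\hat 1\equiv 1$). The genuine difference is in (i)$\,\Rightarrow\,$(ii): the paper proves it directly by rerunning the covering construction from the proof of Proposition~\ref{proposition-lowerbound} with the set $F$ replaced by all of $R$ --- since $\rH=\emptyset$, every point of $R$ admits a function in $\ow{D}_0\cap\ell^\infty(X)$ not vanishing there, and a finite subcover together with the cut-off $\min\{1,\alpha^{-1}\ow{g}\}$ produces an element of $\ow{D}_0$ identically equal to $1$. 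You instead first extract $c\in\ell^1(X)$ from $\rH=\emptyset$ via the Stone--Weierstrass common-zero argument (and you are right that this fragment of the proof of the lemma identifying $C_0(\rH)$ does not use the transience hypothesis under which that lemma is stated), and then, after disposing of the recurrent case, decompose $1=1_0+1_h$ by Proposition~\ref{proposition-decomposition} and kill the harmonic part by applying Proposition~\ref{proposition-lowerbound} to $\pm 1_h$ with the hypothesis on $\rH$ holding vacuously. Both routes are sound and both ultimately rest on the same compactness construction; the paper's is a one-line reduction to a construction already carried out, while yours reuses the two propositions as black boxes at the cost of the preliminary step $c\in\ell^1(X)$ (needed so that $1\in\ow{D}$ and the decomposition applies) and a case split. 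A small bonus of your route is that it isolates the stronger observation that a transient graph with empty harmonic boundary carries no nonzero bounded harmonic function of finite energy.
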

\begin{proof}
 The implication (ii) $\Longrightarrow$ (i) is immediate from the definitions. To show the
 reverse implication (i) $\Longrightarrow$ (ii) we use the construction of $g$ in the proof
 of Proposition \ref{proposition-lowerbound} (with the set $F$ being replaced by $R$)  to obtain $1 \in \ow{D}_0$.

 We let $\ow{Q}', \ow{D}'$ and  $\ow{D}_0'$ be the form and the corresponding spaces associated to the graph $(b,0)$.

 (ii) $\Longrightarrow$ (iii): Since $\ow{Q}'(f) \leq \ow{Q}(f),$ we obviously have
 $\ow{D}_0 \subseteq \ow{D}_0'$. Thus, if $1 \in \ow{D}_0$, we obtain $1 \in \ow{D}_0'$
 and $c \in \ell^1(X)$. As $\ow{Q}'(1) = 0,$ the graph $(b,0)$ is recurrent.

 (iii) $\Longrightarrow$ (ii): Let  $(b,0)$ be recurrent and $c \in \ell^1(X)$. Then, by the definition
 of recurrence given in Appendix \ref{Transience}, there exists a sequence of compactly supported
  functions $(\varphi_n)$ converging to $1$ pointwise such that $\ow{Q}'(\varphi_n) \to 0$ as
  $n\to \infty$. By the cut-off property of $\ow{Q}'$ we may assume  $0 \leq \varphi_n \leq 1$
  for each $n$. Using Lebesgue's Theorem and $c \in \ell^1(X)$ this shows $\ow{Q}(1 - \varphi_n)  \to 0,$
  as $  n \to \infty$  and, hence, $1 \in \ow{D}_0.$
\end{proof}

\begin{theorem}[Existence of solutions to the DP] \label{thm-existence of solutions}
 Assume $(b,c)$ is connected. For each $\varphi \in C_0(\rH)$ the equation
 $$\LL f = 0 \text{ on }X \quad \hat{f}|_{\rH} = \varphi$$
 has a solution $f_\varphi \in \A$.
\end{theorem}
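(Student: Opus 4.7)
The plan is to mirror the blueprint of Theorem~\ref{thm-DP ut graphs}, replacing the full Royden algebra $\A$ by the smaller class $\ow{D} \cap \ell^\infty(X)$ throughout and invoking the general transient maximum principle (Corollary~\ref{cor-maxprinciple}) in place of its uniformly transient counterpart. First I would dispose of the degenerate case $\rH = \emptyset$: then $C_0(\rH)$ is trivial and $f = 0$ solves the equation vacuously. From here on assume $\rH \neq \emptyset$ and set
$$G := \{\hat{f}|_\rH : f \in \ow{D} \cap \ell^\infty(X)\} \subseteq C_0(\rH).$$

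Step~1 is to solve the Dirichlet problem for each $\varphi \in G$. Given $\varphi = \hat{f}|_\rH$ with $f \in \ow{D} \cap \ell^\infty(X)$, apply the Royden decomposition (Proposition~\ref{proposition-decomposition}) to write $f = f_0 + f_h$ with $f_0 \in \ow{D}_0$ and $f_h \in \ow{D}$ harmonic. The ``moreover'' clause forces $f_h \in \ell^\infty(X)$, whence $f_0 = f - f_h \in \ow{D}_0 \cap \ell^\infty(X)$. By the very definition of the harmonic boundary, $\hat{f}_0$ vanishes on $\rH$, so $\hat{f}_h|_\rH = \varphi$; since $f_h \in \ow{D}\cap\ell^\infty(X) \subseteq \A$ this produces the desired solution on $G$.

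Step~2 is to extend the solvability from $G$ to its uniform closure in $C_0(\rH)$. If $(\varphi_n) \subseteq G$ converges uniformly to $\varphi \in C_0(\rH)$ and $f_n \in \ow{D}\cap\ell^\infty(X)$ are the harmonic solutions just constructed, then each difference $f_n - f_m$ is again harmonic and in $\ow{D}\cap\ell^\infty(X)$, so Corollary~\ref{cor-maxprinciple} gives
$$\|f_n - f_m\|_\infty = \|\widehat{f_n - f_m}|_\rH\|_\infty = \|\varphi_n - \varphi_m\|_\infty \to 0.$$
Hence $f_n$ converges uniformly to some $f \in \A$ (the uniform closure of $\ow{D}\cap\ell^\infty(X)$), with $\hat{f}|_\rH = \varphi$ by continuity of the Royden extension. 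Uniform convergence of $(f_n)$ together with the summability $\sum_y b(x,y) < \infty$ allows one to pass to the limit in $\LL f_n(x) = 0$ pointwise via Lebesgue's dominated convergence, yielding $\LL f = 0$.

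The main obstacle is Step~3, namely density of $G$ in $C_0(\rH)$, which I would settle by Stone--Weierstrass. The class $\ow{D}\cap\ell^\infty(X)$ is stable under pointwise products (a standard Leibniz-type estimate using $|fg(x)-fg(y)|^2 \leq 2\|f\|_\infty^2 |g(x)-g(y)|^2 + 2\|g\|_\infty^2|f(x)-f(y)|^2$), so $G$ is a subalgebra of $C_0(\rH)$ that inherits point-separation on $\rH$ from the defining property of $R$. When $1 \in \A$, equivalently $c \in \ell^1(X)$, one has $1 \in G$ and $C_0(\rH) = C(\rH)$, so the classical Stone--Weierstrass theorem yields density. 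When $1 \not\in \A$, every element of $\A$, and hence of $G$, vanishes at the distinguished point $\infty \in \rH$, and the locally compact version of Stone--Weierstrass applied on $\rH \setminus \{\infty\}$ concludes the density argument. Combining Steps 1--3 gives the theorem; the delicate part is precisely handling the dichotomy $1 \in \A$ versus $1 \not\in \A$ in this last density step, whereas the rest is a faithful transcription of the uniformly transient proof, with $\ow{D}\cap\ell^\infty(X)$ as the appropriate working class.
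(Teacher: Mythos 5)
Your proposal is correct and follows essentially the same route as the paper, which itself simply runs the existence part of the proof of Theorem~\ref{thm-DP ut graphs} with the general Royden decomposition (Proposition~\ref{proposition-decomposition}) and the transient maximum principle (Corollary~\ref{cor-maxprinciple}) in place of their uniformly transient counterparts. The only point you should make explicit is that $\rH \neq \emptyset$ forces $(b,c)$ to be transient (by the proposition characterizing $\rH = \emptyset$ via $1 \in \ow{D}_0$), since both Proposition~\ref{proposition-decomposition} and Corollary~\ref{cor-maxprinciple} require transience as a hypothesis.
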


\begin{proof}
We only need to consider the case when $\rH \neq \emptyset$. If $(b,c)$ is recurrent, we then
have $1 \in \ow{D}_0$ and hence $\rH = \emptyset$. Thus, we may assume that $(b,c)$ is transient.
Now, the proof can be carried out as in the existence part of the proof of Theorem \ref{thm-DP ut graphs}
 using the Royden decomposition  (Proposition \ref{proposition-decomposition}) and  the maximum
 principle (Corollary \ref{cor-maxprinciple}) for transient graphs with non-vanishing harmonic boundary.
\end{proof}

\begin{remark}
\begin{itemize}
 \item The class of functions on the boundary for which we prove existence of solutions to the
  Dirichlet problem is somewhat smaller that the one in \cite{Soa}. However, using further
  monotone convergence arguments we could recover these results. We refrain from giving details.
 \item As  mentioned above, uniqueness of solutions is not clear anymore as the maximum principle
  does not seem to hold for arbitrary harmonic functions in $\A$.
\end{itemize}

\end{remark}

\section{Canonically compactifiable graphs and the one-point compactification }
In this section we will have a look at canonically compactifiable
graphs in the context of uniformly transient graph. In particular,
we present a characterization of canonical compactifiability in
terms of uniform transience. Moreover, we will  provide necessary
and sufficient conditions for the Royden compactification to agree
with the one-point compactification. Finally, we will show how
Euclidean lattices in dimension at least three serve as examples
for our results.

\bigskip

We can now characterize the canonically compactifiable graphs within
the class of uniformly transient graphs.  Recall that canonically compactifiable
means that $\ow D \subseteq \ell^\infty(X)$ and that function in $\ow D$ are
said to have finite energy.

\begin{theorem}\label{cc-within-ut} Let $(b,c)$ be a graph over $X$.
Then, the following assertions are equivalent:
\begin{itemize}
\item[(i)] The graph $(b,c)$ is canonically compactifiable.
\item[(ii)] The graph $(b,c)$ is uniformly transient and any
harmonic function of finite energy is bounded.
\end{itemize}
\end{theorem}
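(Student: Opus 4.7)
The proof will be short, as both implications reduce to already established results. The plan is to exploit the Royden decomposition for uniformly transient graphs (Proposition \ref{proposition-decomposition-ut}) together with the definitional content of uniform transience (namely $\ow D_0 \subseteq C_0(X)$).

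For the direction (i) $\Longrightarrow$ (ii): Canonical compactifiability means by definition that $\ow D \subseteq \ell^\infty(X)$. So I first invoke Corollary \ref{cc-implies-ut} to conclude that $(b,c)$ is uniformly transient, and then observe that any harmonic function of finite energy lies in $\ow D$ and is therefore bounded by the very definition of canonical compactifiability. This direction requires no work beyond citing Corollary \ref{cc-implies-ut}.

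For the direction (ii) $\Longrightarrow$ (i): Let $f \in \ow D$ be arbitrary; I need to show $f \in \ell^\infty(X)$. I would apply the Royden decomposition (Proposition \ref{proposition-decomposition-ut}), which is available since $(b,c)$ is uniformly transient, to write $f = f_0 + f_h$ with $f_0 \in \ow D_0$ and $f_h \in \ow D$ harmonic. Uniform transience yields $\ow D_0 \subseteq C_0(X)$, so $f_0$ is bounded (indeed in $C_0(X)$). The assumption that every harmonic function of finite energy is bounded applies to $f_h$, giving $f_h \in \ell^\infty(X)$. Therefore $f = f_0 + f_h$ is bounded, which is exactly canonical compactifiability.

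There is no substantial obstacle here: the Royden decomposition does all the heavy lifting, reducing the arbitrary finite-energy function to a sum of a vanishing-at-infinity part (controlled by uniform transience) and a harmonic part (controlled by hypothesis). The only point to keep in mind is that Proposition \ref{proposition-decomposition-ut} really does require uniform transience rather than mere transience in order to obtain $f_0 \in C_0(X)$, so both conditions in (ii) are used in an essential way.
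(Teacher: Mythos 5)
Your proposal is correct and follows essentially the same route as the paper: Corollary \ref{cc-implies-ut} and the definition of canonical compactifiability for (i) $\Longrightarrow$ (ii), and the Royden decomposition (Proposition \ref{proposition-decomposition-ut}) combined with $\ow D_0 \subseteq C_0(X)$ and the boundedness hypothesis on harmonic functions for (ii) $\Longrightarrow$ (i). No gaps; your closing remark that uniform transience (not mere transience) is what makes $f_0$ vanish at infinity is a fair observation, though for the conclusion one only needs $f_0$ bounded.
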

\begin{proof} (i) $\Longrightarrow$ (ii): By the very definition of
canonical compactifiability any  function of finite energy
is bounded. Moreover, it has already been shown in Corollary~\ref{cc-implies-ut} that any canonically compactifiable graph is
uniformly transient. Thus, (i) implies (ii).

\smallskip

(ii) $\Longrightarrow$ (i): As the graph is uniformly transient, we
have $\ow D_0 \subseteq C_0 (X)$ and any element of $\ow D_0$ is
bounded. Moreover, by assumption, any harmonic function of finite energy is
bounded. Thus, the Royden decomposition, Proposition~\ref{proposition-decomposition-ut}, shows boundedness of all
functions of finite energy and (i) follows.
\end{proof}

The next result shows that uniformly transient graphs as well as
canonically compactifiable graphs  appear naturally whenever the
Royden compactification is the one-point compactification.

\begin{theorem}\label{Liouville-one-point}
Let $(b,0)$ be a connected  graph. Then, the following assertions
are equivalent:

\begin{itemize}

\item[(i)] The graph $(b,0)$ is uniformly transient and any  harmonic function
of finite energy is constant.

%\item[(ii)] $\ow D = \ow D_0 + Lin\{1\}$.

\item[(ii)] The graph $(b,0)$ is canonically compactifiable and any  harmonic
function of finite energy is constant.

\item[(iii)] The graph $(b,0)$ is transient and the Royden compactification of $X$ is the one-point
compactification.
\end{itemize}
\end{theorem}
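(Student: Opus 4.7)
The plan is to obtain $(i) \iff (ii)$ directly from Theorem~\ref{cc-within-ut} and to close the triangle by proving $(i) \Rightarrow (iii)$ and $(iii) \Rightarrow (i)$, both times exploiting the identification $\rH = \rR$ supplied by Theorem~\ref{thm-char uniform transience royden boundary}. For $(i) \iff (ii)$, since constants are bounded, (i) feeds the ``bounded harmonic functions of finite energy'' hypothesis of Theorem~\ref{cc-within-ut} and yields canonical compactifiability; conversely, Corollary~\ref{cc-implies-ut} supplies uniform transience.

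For $(i) \Rightarrow (iii)$, uniform transience first forces transience because $\ow{D}_0 \subseteq C_0(X)$ cannot contain the constant function $1$ on an infinite $X$. For the one-point compactification assertion I argue by contradiction on the cardinality of $\rR$: two distinct boundary points would be separated by some $f \in \ow{D} \cap \ell^\infty(X)$, and the Royden decomposition of Proposition~\ref{proposition-decomposition-ut} together with the vanishing of $\ow{D}_0$-functions on $\rH = \rR$ produces a harmonic $f_h \in \ow{D} \cap \ell^\infty(X)$ whose continuous extension still separates these two points. Then $f_h$ is non-constant of finite energy, contradicting~(i); since $X$ is infinite, $\rR$ cannot be empty, so $|\rR| = 1$.

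For $(iii) \Rightarrow (i)$, uniform transience follows by combining the characterization from Section~\ref{DP-general-graphs} (with $c \equiv 0$, $\rH = \emptyset$ is equivalent to recurrence of $(b,0)$, so transience gives $\rH \neq \emptyset$) with the inclusion $\rH \subseteq \rR = \{\infty\}$ and Theorem~\ref{thm-char uniform transience royden boundary}. The crux is then showing that every harmonic $f \in \ow{D}$ is constant, and I plan to route through canonical compactifiability of $(b,0)$. Suppose some $f \in \ow{D}$ were unbounded; after replacing $f$ by $-f$ if needed and passing to $f^+$, I may assume $f \geq 0$ with $\sup f = \infty$ and $\{f > m\}$ infinite for every $m$. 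Each truncation $g_m := f \wedge m \in \ow{D} \cap \ell^\infty(X) \subseteq \A$ then has limit $m$ at the unique boundary point, so the Royden decomposition (Proposition~\ref{proposition-decomposition-ut}) combined with Corollary~\ref{cor-maxprinciple-ut} forces its bounded harmonic part to equal the constant $m$, giving $\psi_m := m - g_m \in \ow{D}_0$ with $\ow{Q}(\psi_m) \leq \ow{Q}(f)$. Rescaling yields $\varphi_m := \psi_m/m \in \ow{D}_0$ converging to $1$ pointwise and in $\ow{Q}$-energy, so $\aV{\varphi_m - 1}_o \to 0$ and hence $1 \in \ow{D}_0$ by closedness, contradicting transience. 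With canonical compactifiability in hand, every harmonic function of finite energy is bounded, and the single-point-boundary maximum principle then collapses each such function to a constant by applying Corollary~\ref{cor-maxprinciple-ut} to it and to its shift by the boundary value at $\infty$.

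The main obstacle is precisely the unbounded case in $(iii) \Rightarrow (i)$. The one-point compactification hypothesis pins down the behaviour of truncations at infinity so cleanly that the rescaled defect $(m - f \wedge m)/m$ becomes a natural $\ow{D}_0$-approximation of the constant $1$ with vanishing energy; this converts the unbounded harmonic problem into a contradiction with transience, over which we already have independent control from the appendix characterization.
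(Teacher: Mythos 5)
Your proof is correct, and while (i)$\Leftrightarrow$(ii) and (i)$\Rightarrow$(iii) match the paper in substance (the paper identifies the algebra generated by $\ow D$ and the constants as $\ow D_0+\mathrm{Lin}\{1\}$ and reads off the one\-/point compactification, whereas you argue by contradiction through point separation and the Royden decomposition --- two faces of the same fact), your treatment of (iii)$\Rightarrow$(i) genuinely diverges. For uniform transience the paper runs a direct cut\-/off argument showing $\ow D_0\cap\ell^\infty(X)\subseteq C_0(X)$; you instead deduce $\rH\neq\emptyset$ from transience via the characterization in Section~\ref{DP-general-graphs} and combine $\emptyset\neq\rH\subseteq\rR=\{\infty\}$ with Theorem~\ref{thm-char uniform transience royden boundary}, which is shorter and reuses machinery already in place. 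For the Liouville statement the paper appeals tersely to uniqueness of the Dirichlet problem, which as stated covers harmonic functions in $\A$ only and leaves the reduction from unbounded finite\-/energy harmonic functions implicit (it is supplied by Theorem~\ref{theorem-harmonic-functions} in the appendix). Your truncation argument --- showing that an unbounded nonnegative $f\in\ow D$ would force $(m-f\wedge m)/m$ to be an $\ow D_0$\-/approximation of $1$ with vanishing energy, contradicting transience --- establishes canonical compactifiability outright and thereby closes that gap in a self\-/contained way; the price is a somewhat longer argument, but it buys an explicit handling of the unbounded case and delivers (ii) along the way rather than through Theorem~\ref{cc-within-ut} alone.
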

\begin{proof} The equivalence between (i) and (ii) is immediate from
the previous theorem.

\smallskip

(i) $\Longrightarrow$ (iii): By the assumption on the harmonic
functions and the Royden decomposition, Proposition~\ref{proposition-decomposition}, we obtain
that the smallest algebra $\mathcal{A}_{0}$ containing $\ow D$ and the
constant functions is given as $\mathcal{A}_{0}  = \ow D_0 + \mathrm{Lin}\{1\}$,
where $\mathrm{Lin}\{1\}$ denotes the linear span of the constant functions.
Moreover, uniform transience implies $\ow D_0 \subseteq C_0 (X)$. Now,
(iii) is immediate.

\smallskip

(iii) $\Longrightarrow $ (i): This can be inferred from \cite{Soa}.
We include a proof for the convenience of the reader. As the graph
is transient, the algebra $\mathcal{A}_0:=\ow D_0 \cap \ell^\infty
(X)$ does not contain any non vanishing  constant functions. As the
Royden compactification has only one boundary point, any element of
$\mathcal{A}_0$ must then actually vanish on the Royden boundary.
(Assume the contrary: then  $\mathcal{A}_0$ contains an element of the
form $1 + f$ with $f$ vanishing at  the boundary point. By adding a
suitable function with compact support we then obtain that
$\mathcal{A}_0$ contains a uniformly positive function. By a suitable
cut-off, $\mathcal{A}_0$ must then contain the constant functions.)
Thus, the algebra $\mathcal{A}_0$ is  contained in $C_0 (X)$. By an easy cut-off argument,
this yields that, in fact, $\ow D_0$ is contained in $C_0 (X)$. Hence, the graph is
uniformly transient. Now, let a harmonic function of finite energy be given. Then, by
the unique solvability of the boundary value problem proven in the previous section,
this function must be a multiple of the constant
function.
\end{proof}

The assumptions of the theorem turn out to be satisfied for a rather
well-known class of examples.

\medskip

\textbf{Example -- $\Z^d$ for $d\geq 3$.} We consider the Euclidean
integer lattice $\Z^d$ with $d\geq 3$ with  standard weights, i.e.,
$c \equiv 0$ and  $b(x,y) = 1$ if  $x$ and $y$ have Euclidean distance one
and $b(x,y) =0$, otherwise. This is obviously a vertex transitive
graph and (since $d\geq 3$) it is transient. Hence, it is uniformly
transient by Corollary~\ref{c:vertrextransitive}. Moreover, it is
folklore that on these lattices any bounded harmonic function is constant and it is well-known (c.f. Theorem~\ref{theorem-harmonic-functions}),
that the absence of non-constant bounded harmonic functions
implies the absence of non-constant harmonic functions of finite energy. Thus, $\Z^d$ does not support non-constant harmonic functions of finite energy.
Therefore, the previous theorem applies and we find that the Royden
compactification of $\Z^d$ is just the one-point compactification.
This is remarkable as the Royden compactification of the one
dimensional integer lattice is far from being the one-point
compactification,  but rather contains a `huge' number of additional
points,  c.f. \cite{Wys}.

\begin{remark} The considerations of the previous example can easily be
adapted to any transient vertex-(quasi)transitive graph with the
Liouville type  property that harmonic functions of finite energy
are constant.
\end{remark}

\section{Spectral theory of uniformly transient graphs}
In this section we consider  some  spectral features of uniformly
transient graphs on $\ell^p$. Our results here generalize the
corresponding results for canonically compactifiable graphs in
\cite{GHKLW} as canonically compactifiable graphs are uniformly
transient.  In fact, in terms of proofs, we basically
adapt the proofs given in \cite{GHKLW}.

\bigskip

Recall that  the semigroup  $e^{-tL}$, $t>0$, and the resolvents
$(L+\al)^{-1}$, $\al>0$, arising from the forms $Q^{(D)}$ associated
to a graph are called \textit{ultracontractive} if they are bounded
operators from $\ell^{2}(X,m)$ to $\ell^{\infty}(X)$.

\begin{lemma}[Uniformly transient graphs are ultracontractive] \label{t:ultracontractive}
Let  $(b,c)$ be a uniformly transient graph over
$X$. Let $m$ be a measure on $X$ of full support and $L = L^{(D)}_m$
the operator associated to $Q^{(D)}_m$. Then, the associated
semigroup and the resolvent are ultracontractive.
%\begin{itemize}
%\item  $e^{-tL}$, $t>0$, and $(L+\al)^{-1}$, $\al>0$,  are norm analytic and compact on all $\ell^{p}(X,m)$,  $1\leq p\leq\infty $,
%\item  the spectra of the generators of $e^{-tL}$ on $\ell^{p}(X,m)$ agree for all  $1\leq p\leq\infty$.
%\end{itemize}
\end{lemma}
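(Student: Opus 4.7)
The plan is to leverage the supnorm isoperimetric inequality from part (ii) of Theorem \ref{main} (which is available to us since $(b,c)$ is uniformly transient) and combine it with the spectral calculus applied to $L = L^{(D)}_m$. By Lemma \ref{Characterization-D-Q-D} we have $D(Q^{(D)}_m) = \ow D_0 \cap \ell^2(X,m)$, so extending the bound $\|\varphi\|_\infty \leq C\, \ow Q^{1/2}(\varphi)$ from $C_c(X)$ to $D(Q^{(D)}_m)$ by density (approximating in $\aV{\cdot}_o$, using pointwise continuity from Lemma \ref{l:pointevaluation}, and Fatou for $\ow Q$) yields
\[
\|u\|_\infty \leq C\, \ow Q^{1/2}(u) \qquad \text{for all } u \in D(Q^{(D)}_m).
\]
This is the key one-sided inequality that will connect the $\ell^2$ theory to $\ell^\infty$.

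Next I would observe that for $f \in \ell^2(X,m)$ and any $t>0$, the spectral calculus gives $e^{-tL}f \in D(L) \subseteq D(Q^{(D)}_m)$, and likewise $(L+\alpha)^{-1}f \in D(L)$ for $\alpha>0$. Since $\ow Q(u) = Q^{(D)}_m(u) = \|L^{1/2}u\|^2$ on $D(Q^{(D)}_m)$, the displayed inequality translates into
\[
\|e^{-tL}f\|_\infty \leq C\, \|L^{1/2} e^{-tL} f\| \quad \text{and} \quad \|(L+\alpha)^{-1}f\|_\infty \leq C\, \|L^{1/2}(L+\alpha)^{-1} f\|.
\]

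Finally, the spectral theorem bounds the $\ell^2$--operator norms of $L^{1/2}e^{-tL}$ and $L^{1/2}(L+\alpha)^{-1}$ by $\sup_{\lambda \geq 0}\sqrt{\lambda}\,e^{-t\lambda} = (2et)^{-1/2}$ and $\sup_{\lambda \geq 0}\frac{\sqrt{\lambda}}{\lambda+\alpha} = \frac{1}{2\sqrt{\alpha}}$ respectively. Plugging these in gives
\[
\|e^{-tL}f\|_\infty \leq \frac{C}{\sqrt{2et}}\, \|f\| \qquad \text{and} \qquad \|(L+\alpha)^{-1}f\|_\infty \leq \frac{C}{2\sqrt{\alpha}}\, \|f\|,
\]
which establishes ultracontractivity of both families.

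The only step requiring any real care is the extension of the isoperimetric inequality from $C_c(X)$ to $D(Q^{(D)}_m)$, since a priori the convergence in the $\aV{\cdot}_{\ow Q}$ norm need not be uniform; but combining the pointwise continuity of point evaluations on $(\ow D_0,\aV{\cdot}_o)$ with lower semicontinuity of $\ow Q$ (or equivalently appealing directly to the closed graph theorem to push the map $(D(Q^{(D)}_m),\aV{\cdot}_{\ow Q})\to (C_0(X),\|\cdot\|_\infty)$ through Theorem \ref{main}(iii)) disposes of this quickly. Everything else is routine spectral calculus.
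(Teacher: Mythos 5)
Your proof is correct, but it takes a genuinely different route from the paper's. The paper argues softly: it invokes condition (iii) of Theorem \ref{main} (the uniform strong Feller property) to get the chain of inclusions $e^{-tL}\ell^2(X,m)\subseteq D(L)\subseteq D(Q^{(D)}_m)\subseteq C_0(X)\subseteq \ell^\infty(X)$ and then applies the closed graph theorem to conclude that $e^{-tL}$ is bounded from $\ell^2(X,m)$ to $\ell^\infty(X)$, with the resolvent statement left to ``standard techniques.'' You instead use condition (ii) of Theorem \ref{main}, extend the inequality $\|u\|_\infty\leq C\,\ow{Q}^{1/2}(u)$ to all of $D(Q^{(D)}_m)$ by density (which is legitimate, and is in fact the same extension the paper performs later in the proof of part (c) of Theorem \ref{theorem-spectral}), identify $\ow{Q}(u)=\|L^{1/2}u\|^2$, and then bound $\|L^{1/2}e^{-tL}\|$ and $\|L^{1/2}(L+\alpha)^{-1}\|$ by spectral calculus; your suprema $(2et)^{-1/2}$ and $(2\sqrt{\alpha})^{-1}$ are computed correctly. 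What your approach buys is quantitative information the paper's soft argument does not provide: explicit ultracontractivity constants $C(2et)^{-1/2}$ and $C(2\sqrt{\alpha})^{-1}$ with the expected decay in $t$ and $\alpha$, and a uniform treatment of semigroup and resolvent; what the paper's argument buys is brevity and independence from any explicit functional calculus. Both proofs ultimately rest on the same input, namely the equivalent characterizations of uniform transience in Theorem \ref{main}.
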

\begin{proof} We only consider the semigroup operators $e^{-t L}$, $t >0$. The
statements on resolvents can then be derived by standard techniques.
Let $t>0$ be arbitrary. We have
$$e^{-t L} \ell^{2}(X,m)\subseteq D(L) \subseteq D(Q^{(D)}_m)\subseteq C_0 (X) \subseteq \ell^{\infty}(X).$$
Since $e^{-tL}$ is a continuous operator  on $\ell^2 (X,m)$ we now
obtain, by a simple application of the closed graph theorem, that
$e^{-t L} $  can be seen as a continuous map from $\ell^2 (X,m)$ to
$\ell^\infty (X)$. This shows the desired  statement.
\end{proof}

\begin{theorem}[Spectral properties of uniformly transient
graphs]\label{theorem-spectral} Let  $(b,c)$ be a
uniformly transient graph over $X$. Let  $m$ be a measure on $X$ of
full support with $m(X) <\infty$ and let $L = L^{(D)}_m$ be the
operator associated to $Q^{(D)}_m$. Then, the following statements
hold:

\begin{itemize}
\item[(a)]  The operators  $e^{-tL}$, $t>0$, and $(L+\al)^{-1}$, $\al>0$, are trace class.

\item[(b)]  The spectrum of $L$ is purely discrete.

\item[(c)] The infimum of the spectrum of $L$ is bounded below by

$$\alpha :=\frac{1}{C^2 m(X)}, $$ where $C$ is the constant appearing in (i) of Theorem
\ref{main}.

\item[(d)] The  semigroups $e^{-tL}$, $t>0$, and the resolvents
$(L+\al)^{-1}$, $\al>0$, are norm analytic and compact on all
$\ell^{p}(X,m)$, $1\leq p \leq \infty $, and the spectra of the
generators of $e^{-tL}$ on $\ell^{p}(X,m)$ agree for all  $1\leq p
\leq  \infty$.
\end{itemize}
\end{theorem}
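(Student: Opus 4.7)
The plan is to deduce all four statements from ultracontractivity (Lemma~\ref{t:ultracontractive}) combined with the finiteness of $m(X)$, closely paralleling the arguments used for canonically compactifiable graphs in \cite{GHKLW}.

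For part (a), I would begin with the ultracontractive bound $\|e^{-tL}f\|_{\infty}\le K_{t}\|f\|$ for $f\in\ell^{2}(X,m)$, from which a Cauchy--Schwarz argument on the symmetric integral kernel $p_{t}$ of $e^{-tL}$ yields $\sum_{y}p_{t}(x,y)^{2}m(y)\le K_{t}^{2}$. Multiplying by $m(x)$ and summing, using $m(X)<\infty$, gives $\|e^{-tL}\|_{HS}^{2}\le K_{t}^{2}m(X)<\infty$, so $e^{-tL}$ is Hilbert--Schmidt. Splitting $e^{-tL}=e^{-(t/2)L}e^{-(t/2)L}$ as a product of two Hilbert--Schmidt operators then gives trace class. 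The same strategy handles the resolvent, applied to $(L+\alpha)^{-1/2}$ instead: by spectral calculus this operator maps $\ell^{2}(X,m)$ boundedly into $D(Q^{(D)})$ with its form norm, and uniform transience combined with the closed graph theorem provides a continuous embedding $D(Q^{(D)})\hookrightarrow C_{0}(X)\subseteq\ell^{\infty}(X)$ via Theorem~\ref{main}(iii). Hence $(L+\alpha)^{-1/2}$ is Hilbert--Schmidt and $(L+\alpha)^{-1}=((L+\alpha)^{-1/2})^{2}$ is trace class. Note that one cannot simply deduce trace class of the resolvent from trace class of the semigroup, since summability of $e^{-t\lambda_{n}}$ for all $t>0$ is strictly weaker than summability of $1/(\lambda_{n}+\alpha)$; the square-root route above is the essential ingredient.

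Part (b) is immediate, since trace class operators are compact and the spectral mapping theorem for self-adjoint semigroups transfers pure discreteness from $(L+\alpha)^{-1}$ to $L$. For part (c), I would combine the isoperimetric inequality $\|\varphi\|_{\infty}\le C\,\ow Q(\varphi)^{1/2}$ from Theorem~\ref{main}(ii) with the elementary estimate $\|\varphi\|^{2}\le m(X)\|\varphi\|_{\infty}^{2}$, valid because $m(X)<\infty$. For $\varphi\in C_{c}(X)$ this produces $\|\varphi\|^{2}\le C^{2}m(X)Q^{(D)}(\varphi)$, and density of $C_{c}(X)$ in $D(Q^{(D)})$ with respect to the form norm propagates the estimate to all of $D(Q^{(D)})$. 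The variational formula for the bottom of the spectrum then delivers the lower bound $1/(C^{2}m(X))$.

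Part (d) I view as the principal obstacle and I would adapt the $\ell^{p}$-machinery of \cite{GHKLW}. Ultracontractivity of $e^{-tL}$ combined with its $\ell^{p}$-contractivity from \cite{KL1} yields, via Riesz--Thorin interpolation and duality, that $e^{-tL}$ maps $\ell^{p}(X,m)$ into $C_{0}(X)$ boundedly for every $p\in[1,\infty]$ and every $t>0$, with holomorphic dependence on $t$ in the right half plane coming from the sectoriality of the self-adjoint generator. Since $m(X)<\infty$ the inclusion $\ell^{\infty}(X)\hookrightarrow\ell^{p}(X,m)$ is bounded, and by factoring $e^{-tL}=e^{-(t/2)L}e^{-(t/2)L}$ through the compact map $\ell^{p}(X,m)\to\ell^{2}(X,m)\to C_{0}(X)\subseteq\ell^{p}(X,m)$ one transfers the Hilbert--Schmidt compactness to every $\ell^{p}(X,m)$, and analogously for the resolvents. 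For the $\ell^{p}$-independence of the spectrum I would argue in two directions: any $\ell^{2}$-eigenfunction $\psi$ of $L$ lies in $D(Q^{(D)})\subseteq C_{0}(X)$, hence in every $\ell^{p}(X,m)$; conversely, an eigenfunction on $\ell^{p}$ is carried into $C_{0}(X)\subseteq\ell^{2}(X,m)$ by any $e^{-tL}$, $t>0$, and so, by the eigenvalue equation, already belongs to $\ell^{2}(X,m)$. The two directions match eigenvalues across the $\ell^{p}$-spaces, and since each spectrum is pure point by compactness, the spectrum is independent of $p$.
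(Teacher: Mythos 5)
Your proof is correct, and its overall strategy --- ultracontractivity from Lemma~\ref{t:ultracontractive} combined with $m(X)<\infty$ --- is the same as the paper's; the differences lie in how individual steps are executed, and in each case you supply detail that the paper delegates elsewhere. For (a) the paper writes $e^{-tL}=j\,e^{-tL}$ as a composition $\ell^2(X,m)\to\ell^\infty(X)\to\ell^2(X,m)$ and invokes the Grothendieck factorization principle to conclude Hilbert--Schmidt, whereas you read off the bound $\sum_y p_t(x,y)^2m(y)\le K_t^2$ directly from the kernel (this is duality rather than Cauchy--Schwarz, but the conclusion is right) and sum against $m(x)$; the two arguments are interchangeable, yours being a concrete instance of the abstract principle. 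For the resolvent the paper says only that the statement follows ``by standard techniques''; your route through $(L+\alpha)^{-1/2}$, which maps $\ell^2(X,m)$ boundedly into $(D(Q^{(D)}),\aV{\cdot}_{\ow Q})$ and hence, by Theorem~\ref{main} and the closed graph theorem, into $C_0(X)$, is the right way to implement this, and your remark that trace class of $e^{-tL}$ for all $t>0$ does not formally imply trace class of $(L+\alpha)^{-1}$ (eigenvalues of order $(\log n)^2$ give a counterexample) is a genuine point showing the extra step is needed. Parts (b) and (c) coincide with the paper's argument. For (d) the paper simply cites Theorems~2.1.4 and 2.1.5 of \cite{Davies}; your interpolation and factorization through $\ell^2(X,m)$, together with the two-directional transfer of eigenfunctions between $\ell^p(X,m)$ and $\ell^2(X,m)$ (using $C_0(X)\subseteq\ell^p(X,m)$ for all $p$ since $m(X)<\infty$), is in effect a proof of those theorems in this discrete setting, so nothing essential is missing.
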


\begin{remark} The proof of (a) of this theorem uses a technique
sometimes known as the factorization principle having its roots in
Grothendieck's work \cite{Gro}. For questions of the type considered
here, it has been introduced in \cite{Sto}  to which we refer for
further discussion  (see \cite{BdMSto} as well for further
application in a similar spirit).

\end{remark}

\begin{proof} (a) By  $m(X) <\infty$, there is a canonical continuous  embedding
$$j : \ell^\infty (X)\longrightarrow \ell^2 (X,m), \, f\mapsto f.$$
Thus, by  Lemma  \ref{t:ultracontractive},
$$e^{-t L } = j e^{-t L}$$
is a composition of a continuous maps from $\ell^2 (X,m) $ to
$\ell^\infty (X)$ with a continuous map from $\ell^\infty (X)$ to
$\ell^2 (X,m)$. By  the Grothendieck factorization principle (see
preceding remark)  it is then a Hilbert-Schmidt operator. Then, the
operator
$$e^{-t L }  = e^{- \frac{t}{2}  L} e^{- \frac{t}{2}L}$$
is trace class as it is a  product of two Hilbert-Schmidt operators.

\smallskip

(b) This   follows directly  from (a).

\smallskip

(c) By the definition of $D (Q^{(D)})$ and the closed graph theorem,  the estimate  (ii) of Theorem
\ref{main} holds for all $f\in D (Q^{(D)})$. Thus,  we obtain
directly
$$\|f\|_m^2 \leq m(X) \|f\|_\infty^2 \leq m(X) C^2  Q (f)$$
for all $f\in D (Q^{(D)})$. This easily gives (c).

\smallskip

(d) This  follows directly from  Theorem~2.1.4 and Theorem~2.1.5 of
\cite{Davies}.
\end{proof}

\begin{remark} (a)  Graphs  with  discrete spectrum have been investigated in, e.g.,
\cite{Gol,Kel,KL3, KLW,Woj2}. A general discussion of
characterizations and  perturbation theory of selfadjoint operators
with compact resolvent is recently given in  \cite{LStW}. The
$p$-independence of the spectra of general graph Laplacians has
recently been investigated in \cite{BHK}.

(b) Note that the theorem applies in particular to $\Z^d$ for $d\geq
3$ as this is a uniformly transient graph (as discussed previously).
\end{remark}

% \begin{remark} Of course any uniformly transient graph is transient.
% Hence, $Q^{(D)}$ is stochastically incomplete.
% \end{remark}

We finish this section by giving a lower bound for the eigenvalues of $L$.
\begin{theorem}
 Let  $(b,c)$ be a
uniformly transient graph over $X$. Let  $m$ be a measure on $X$ of
full support with $m(X) <\infty$ and let $L = L^{(D)}_m$ be the
operator associated to $Q^{(D)}_m$. Let $(x_n)$ be an enumeration of $X$. Then, the inequality
 $$\frac{1}{C^2 m(X\setminus \{x_1,\ldots, x_n\})} \leq \lambda_{n+1}(L)$$
 holds, where $C$ is the constant appearing in $\mathrm{(ii)}$ of Theorem \ref{main} and $\lambda_{n}(L)$ is the $n$-th eigenvalue of $L$ counted with multiplicity.
\end{theorem}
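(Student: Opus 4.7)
The plan is to apply the min--max (Courant--Fischer--Weyl) characterization of eigenvalues of the non-negative self-adjoint operator $L$, combined with the sup-norm isoperimetric inequality of part (ii) of Theorem \ref{main} and the finiteness of $m$. Since $L$ has purely discrete spectrum by Theorem \ref{theorem-spectral}(b), the eigenvalues $\lambda_1(L) \leq \lambda_2(L) \leq \ldots$ (counted with multiplicity) satisfy
\begin{align*}
\lambda_{n+1}(L) = \sup_{\dim W = n} \inf \left\{ \frac{Q^{(D)}(f)}{\|f\|^2} :  f \in D(Q^{(D)}) \cap W^\perp,\, f \neq 0 \right\}.
\end{align*}
The strategy is to choose the test subspace $W := \lin\{\delta_{x_1},\ldots,\delta_{x_n}\} \subseteq \ell^2(X,m)$ and obtain the claimed lower bound on the Rayleigh quotient on $W^\perp$.

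For this choice of $W$, a function $f \in \ell^2(X,m)$ lies in $W^\perp$ exactly when $f(x_i) m(x_i) = \langle f,\delta_{x_i}\rangle = 0$ for $i = 1,\ldots,n$, i.e., $f$ vanishes on $\{x_1,\ldots,x_n\}$. First I would extend the inequality of Theorem \ref{main}(ii) from $C_c(X)$ to the full domain $D(Q^{(D)})$. This is straightforward since $D(Q^{(D)}) = \ow D_0 \cap \ell^2(X,m) \subseteq C_0(X)$ by Lemma \ref{Characterization-D-Q-D} and uniform transience; any $f \in D(Q^{(D)})$ is an $\aV{\cdot}_{\ow Q}$-limit of a sequence $(\varphi_k) \subseteq C_c(X)$, which by Lemma \ref{l:pointevaluation} converges pointwise, so the bound $\|\varphi_k\|_\infty \leq C \ow Q^{1/2}(\varphi_k)$ passes to the limit to give $\|f\|_\infty \leq C\, Q^{(D)}(f)^{1/2}$.

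The second step combines this sup-norm estimate with the fact that $f$ vanishes on $\{x_1,\ldots,x_n\}$: for any $f \in D(Q^{(D)}) \cap W^\perp$,
\begin{align*}
\|f\|^2 = \sum_{x \in X \sm \{x_1,\ldots,x_n\}} |f(x)|^2 m(x) \leq \|f\|_\infty^2 \, m(X \sm \{x_1,\ldots,x_n\}) \leq C^2 \, m(X \sm \{x_1,\ldots,x_n\}) \, Q^{(D)}(f).
\end{align*}
Rearranging yields the Rayleigh quotient bound
\begin{align*}
\frac{Q^{(D)}(f)}{\|f\|^2} \geq \frac{1}{C^2 m(X \sm \{x_1,\ldots,x_n\})},
\end{align*}
and then the min--max formula above, applied with this specific $W$ (whose dimension is $n$ since $m$ has full support), gives the claimed inequality.

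I do not anticipate any serious obstacle; the main point to be careful with is the passage of the sup-norm inequality from $C_c(X)$ to $D(Q^{(D)})$, which relies exactly on uniform transience through Lemma \ref{Characterization-D-Q-D}, and ensuring that $W^\perp$ is correctly identified with functions vanishing on $\{x_1,\ldots,x_n\}$ (so that the sum defining $\|f\|^2$ is taken only over the complement).
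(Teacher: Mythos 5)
Your proof is correct and follows essentially the same route as the paper: the min--max principle with the test space spanned by $\delta_{x_1},\ldots,\delta_{x_n}$, the sup-norm isoperimetric inequality from Theorem \ref{main}(ii), and the elementary bound $\|f\|^2\leq m(X\setminus\{x_1,\ldots,x_n\})\|f\|_\infty^2$ for functions vanishing on $\{x_1,\ldots,x_n\}$. The only cosmetic difference is that the paper keeps the infimum over the form core $C_c(X)$ (so it can use the inequality there directly), whereas you extend the inequality to all of $D(Q^{(D)})$ first; both steps are valid and the paper itself performs the same extension in part (c) of Theorem \ref{theorem-spectral}.
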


\begin{proof}
 To prove the lower bound we use the min-max principle (see e.g. the textbook \cite{Wei}) and the fact that $C_c(X)$ is a form core for $Q^{(D)}_m$ to obtain
 \begin{align*}
\lambda_{n+1}(L) &= \sup_{\varphi_1,\ldots, \varphi_n \in \ell^2(V,m)} \inf_{0\not \equiv \varphi \in C_c(X)\cap\{\varphi_1,\ldots,\varphi_n\}^\perp} \frac{\ow{Q}(\varphi)}{\|\varphi\|^2} \\
  &\geq \inf_{0\not \equiv \varphi \in C_c(X)\, : \, \varphi(x_1) = \ldots = \varphi (x_n)  = 0}  \frac{\ow{Q}(\varphi)}{\|\varphi\|^2} \\
  &\geq \inf_{0\not \equiv \varphi \in C_c(X)\, : \, \varphi(x_1) = \ldots = \varphi (x_n)  = 0} \frac{\|\varphi\|^2_\infty}{C^2 \|\varphi\|^2},
  \end{align*}
where we have used the uniform transience of $(b,c)$ for the last inequality. Now, the statement on the lower bound follows from the elementary fact that bounded functions $\varphi$ that vanish on the set $\{x_1,\ldots,x_n\}$ satisfy
$$\|\varphi\|^2 \leq m(X\setminus \{x_1,\ldots,x_n\})
\|\varphi\|^2_\infty. $$ This finishes the proof.
\end{proof}

\begin{remark}
The best possible lower bound in the above theorem is achieved by
choosing an enumeration $x_1,x_2,\ldots$ of $X$ that satisfies
$m(x_n) \geq m(x_{n+1})$ for each $n \geq 1$. In the case where $m(X) < \infty$
 such an enumeration can always be chosen because $m$ has
to vanish at infinity.
\end{remark}

\begin{appendix}

\section{A characterization of the domain of the form with Dirichlet
boundary conditions} \label{Domain} In this section we provide a proof for Lemma
\ref{Characterization-D-Q-D}, i.e., we show that
$$\overline{C_{c}(X)}^{\aV{\cdot}_{\ow Q}} = \ow D_0
\cap \ell^2 (X,m)$$ whenever $(b,c)$ is a graph over   $X$ and $m$ an measure on $X$ of full support.The statement is a special case of a theorem about general Dirichlet forms.
\begin{proof}
Let $(Q,D)$ be a Dirichlet form  on $L^2(Y,\mu)$ (where $Y$ is a locally compact
Hausdorff space and $\mu$ a Radon measure of full support). One can associate
to $(Q,D)$ the extended Dirichlet space $D_e$, where $u \in D_e$ if and only if there
 exists a $Q$-Cauchy sequence $(u_n)$ with $u_n \to u$ $\mu$-almost surely.

It is well known that the equality  $D = D_e \cap L^2(Y,\mu)$ holds (confer Theorem~1.5.2 in \cite{Fuk}).
In the situation of graphs, i.e., $Q = Q^{(D)}, D = D(Q^{(D)}), Y = X, \mu = m$, it is easy to check
that $D(Q^{(D)})_e = \ow{D}_0 $ (confer Proposition 3.8 in \cite{Schm}).
\end{proof}

\section{A characterization of transience via equivalence of
norms}\label{Transience}  In this section we present a
characterization of transience via an equivalence of norms. This
characterization is probably well-known. As we have not been able to
find it in the literature, we include a proof. We also point out that
it  sheds some additional light on the corresponding equivalence in
our main characterization of ultratranscience.

\medskip

There are various equivalent characterizations of transience. The following definition suits our purposes best. For further details
and a discussion of the relationship to other characterizations  we
refer the reader to \cite{Soa} and \cite{Fuk}.

\begin{definition} A  connected graph $(b,c)$ is \textit{recurrent} if and only if $1$
belongs to $\ow D_0$ and $\ow Q (1) =0$ holds. The graph is called \textit{transient} if it is not recurrent.
\end{definition}

\begin{remark} Obviously, $\ow Q (1) =0$ holds if and only if $c \equiv 0$
holds.

\end{remark}

\begin{theorem}\label{char-transience} Let $(b,c)$ be a connected graph over the countably
infinite $X$. Then, the following assertions are equivalent:
\begin{itemize}
\item[(i)] The graph $(b,c)$ is transient.

\item[(ii)] The norms $\aV{\cdot}_o$ and $\ow Q^{1/2}$ are
equivalent on $C_c (X)$ for every $o \in X$.

\item[(ii$^{\prime}$)]  The norms $\aV{\cdot}_o$ and $\ow Q^{1/2}$ are
equivalent on $C_c (X)$ for one $o \in X$.

\item[(iii)] For every  $o\in X$ there exists $C_o\geq 0$ with
$ |\varphi(o)|\leq C_o \ow Q^{1/2} (\varphi)$
 for all $\varphi \in C_c
(X)$.

\item[(iii$^{\prime}$)] For one   $o\in X$ there exists $C_o\geq 0$ with
$ |\varphi(o)|\leq C_o \ow Q^{1/2} (\varphi)$ for all $\varphi \in C_c
(X)$.

\item[(iv)] $\textup{cap}(o)>0$ for every $o \in X$.

\item[(iv$^\prime$)]  $\textup{cap}(o)>0$ for one $o \in X$.

\end{itemize}

\end{theorem}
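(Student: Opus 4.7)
\medskip

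\textbf{Proof proposal.} The plan is to group the eight conditions into three clusters and reduce everything to the single substantive equivalence $\mathrm{(i)} \Longleftrightarrow \mathrm{(iii}^{\prime}\mathrm{)}$. First I would establish the trivial within-cluster equivalences: $\mathrm{(iii)} \Longleftrightarrow \mathrm{(iv)}$ and $\mathrm{(iii}^{\prime}\mathrm{)} \Longleftrightarrow \mathrm{(iv}^{\prime}\mathrm{)}$ are immediate from the definition of $\mathrm{cap}(o)$ after rescaling any test function by $\varphi(o)^{-1}$; and $\mathrm{(ii)} \Longleftrightarrow \mathrm{(iii)}$ (respectively primed) comes from the identity $\aV{\varphi}_o^2 = \ow Q(\varphi) + |\varphi(o)|^2$, which already gives $\ow Q^{1/2} \leq \aV{\cdot}_o$ for free, so equivalence of the two norms is the same as a bound $|\varphi(o)|^2 \leq (C^2-1)\ow Q(\varphi)$.

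Next I would upgrade the primed versions to the unprimed ones using connectedness. Given $|\varphi(o)| \leq C_o \ow Q^{1/2}(\varphi)$ for $\varphi \in C_c(X)$ and any other $p \in X$, Lemma \ref{l:DLip} yields
\begin{align*}
|\varphi(p)| \;\leq\; |\varphi(o)| + |\varphi(p)-\varphi(o)| \;\leq\; \bigl(C_o + d(p,o)^{1/2}\bigr)\,\ow Q^{1/2}(\varphi),
\end{align*}
so $\mathrm{(iii}^{\prime}\mathrm{)} \Longrightarrow \mathrm{(iii)}$, and analogously for $\mathrm{(ii)}$ and $\mathrm{(iv)}$.

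The main content is then $\mathrm{(i)} \Longleftrightarrow \mathrm{(iii}^{\prime}\mathrm{)}$. For $\neg\mathrm{(i)} \Longrightarrow \neg\mathrm{(iii}^{\prime}\mathrm{)}$: if $(b,c)$ is recurrent then $1 \in \ow D_0$ and $\ow Q(1)=0$, so $c \equiv 0$ and there exist $\varphi_n \in C_c(X)$ with $\aV{\varphi_n-1}_o \to 0$; in particular $\varphi_n(o) \to 1$ and $\ow Q(\varphi_n) = \ow Q(\varphi_n - 1) \to 0$ (using $c \equiv 0$), which obstructs any fixed constant $C_o$ after normalizing so that $\varphi_n(o) = 1$. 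For the converse $\neg\mathrm{(iii}^{\prime}\mathrm{)} \Longrightarrow \neg\mathrm{(i)}$: pick $\varphi_n \in C_c(X)$ with $\varphi_n(o)=1$ and $\ow Q(\varphi_n) \to 0$; apply the normal contraction $t \mapsto (t\vee 0)\wedge 1$ to stay in $C_c(X)$ with $0 \leq \varphi_n \leq 1$ while only decreasing $\ow Q$; extract a pointwise subsequential limit $\varphi$ by a diagonal argument on the countable set $X$; by Fatou's lemma (lower semicontinuity of $\ow Q$) we get $\ow Q(\varphi) = 0$, forcing $\varphi \equiv 1$ by connectedness and simultaneously $c \equiv 0$. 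Because $c \equiv 0$, one has $\ow Q(\varphi_n - 1) = \ow Q(\varphi_n) \to 0$, so $\aV{\varphi_n - 1}_o \to 0$, giving $1 \in \ow D_0$ together with $\ow Q(1)=0$, i.e.\ recurrence.

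The main obstacle is the direction $\neg\mathrm{(iii}^{\prime}\mathrm{)} \Longrightarrow \neg\mathrm{(i)}$, and specifically the step that deduces $c \equiv 0$ from the mere existence of the approximating sequence. The entire point is that without knowing $c \equiv 0$ a priori, one cannot freely pass between $\ow Q(\varphi_n - 1)$ and $\ow Q(\varphi_n)$; the route around this is exactly the cut-off plus Fatou argument, which produces a limit $\varphi$ with $\ow Q(\varphi)=0$ and $\varphi(o)=1$, and only then does $c(o)|\varphi(o)|^2 \leq \ow Q(\varphi) = 0$ for every $o$ yield $c \equiv 0$.
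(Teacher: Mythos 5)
Your proof is correct and follows essentially the same route as the paper: both reduce everything to the single substantive fact that a sequence $\varphi_n\in C_c(X)$ with $\varphi_n(o)$ bounded away from $0$ and $\ow Q(\varphi_n)\to 0$ forces, via lower semicontinuity of $\ow Q$ (Fatou), that $\ow Q(1)=0$, hence $c\equiv 0$, hence $\ow Q(\varphi_n-1)=\ow Q(\varphi_n)\to 0$ and $1\in\ow D_0$, i.e.\ recurrence. The differences are only organizational: the paper closes the cycle (i)$\Rightarrow$(ii)$\Rightarrow$(ii$^{\prime}$)$\Rightarrow$(i) and so never needs your explicit primed-to-unprimed upgrade via Lemma~\ref{l:DLip}, and it identifies the pointwise limit directly from Lemma~\ref{l:DLip} rather than by your truncation-plus-diagonal extraction.
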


\begin{remark} Note that properties (i), (ii), and (iv) of Theorem \ref{main} directly
strengthen properties (i), (iii), and (iv) of the previous theorem.
\end{remark}

\begin{proof} The equivalences between (ii), (iii) and (iv)  and between (ii$^{\prime}$),
(iii$^{\prime}$) and (iv$^{\prime})$ are clear.

\smallskip

(i) $\Longrightarrow$ (ii): It was remarked after the proof of Theorem~\ref{main} that if $\aV{\cdot}$ and $Q^{1/2}$ are not equivalent norms on
$C_c(X)$, then $1 \in \ow{D}_0$ and $c \equiv 0$ so that the graph is recurrent.

\smallskip

(ii) $\Longrightarrow$ (ii$^{\prime}$): This is clear.

\smallskip

(ii$^{\prime}$) $\Longrightarrow$ (i):
 Let $o\in X$ be given such that
$\aV{\cdot}_o$ and $\ow Q^{1/2}$ are equivalent on $C_c (X)$.  Therefore,
there exists $C>0$ such that $\aV{\varphi}_o \leq C \ow Q^{1/2}(\varphi)$ for all $\varphi \in C_c(X)$.
Assume that $(b,c)$ is recurrent. Then, $1$ belongs to $\ow D_0$ and $c \equiv 0$
holds. Hence, there exists a sequence $(\varphi_n)$ in $C_c (X)$
converging to $1$ with respect to $\aV{\cdot}_o$.  In particular, $\lim_{n \to \infty} \ow Q(\varphi_n) = 0$.
As $c \equiv 0$, we then obtain
$$1 =  \aV{1}_o = \lim_{n\to\infty} \aV{\varphi_n}_o \leq \lim_{n\to\infty} C\ow Q^{1/2}(\varphi_n) =0$$
giving a contradiction.
\end{proof}

\begin{corollary}\label{gamma-eq-gamma-o} Let $(b,c)$ be a connected transient graph. Then,
$\gamma$ and $\gamma_o$ are equivalent metrics for any $o\in X$.
\end{corollary}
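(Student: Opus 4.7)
The plan is to show the two inequalities $\gamma_o \leq \gamma$ and $\gamma \leq C \gamma_o$ for some constant $C = C(o) \geq 0$. The first inequality is immediate from the previous proposition (it was proved there with no transience assumption), so the real content is the reverse direction, which I will extract directly from the norm-equivalence characterization of transience in Theorem~\ref{char-transience}.

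First I would invoke Theorem~\ref{char-transience}: since $(b,c)$ is transient, the norms $\aV{\cdot}_o$ and $\ow{Q}^{1/2}$ are equivalent on $C_c(X)$. In particular, there exists a constant $C = C(o) > 0$ such that
$$\aV{\varphi}_o \leq C \, \ow{Q}^{1/2}(\varphi) \quad \text{for all } \varphi \in C_c(X).$$

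Next I would use this to control $\gamma$ by $\gamma_o$. Fix $x,y \in X$ and let $\varphi \in C_c(X)$ with $\ow{Q}(\varphi) \leq 1$. Then $\aV{\varphi}_o \leq C$, so the scaled test function $\varphi/C$ satisfies $\aV{\varphi/C}_o \leq 1$, and the definition of $\gamma_o$ yields
$$|\varphi(x) - \varphi(y)| = C \cdot |(\varphi/C)(x) - (\varphi/C)(y)| \leq C \, \gamma_o(x,y).$$
Taking the supremum over all admissible $\varphi$ gives $\gamma(x,y) \leq C \, \gamma_o(x,y)$. Combined with the inequality $\gamma_o \leq \gamma$ from the previous proposition, this establishes the equivalence of the two metrics.

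There is no real obstacle here: the corollary is essentially a repackaging of Theorem~\ref{char-transience} in the language of the metrics $\gamma$ and $\gamma_o$. The only small point to notice is that we genuinely need transience (as opposed to mere connectedness), because without it $\aV{\cdot}_o$ could strictly dominate $\ow{Q}^{1/2}$ and the scaling argument above would break down.
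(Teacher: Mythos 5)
Your proof is correct and follows exactly the route the paper intends: the corollary is stated as an immediate consequence of Theorem~\ref{char-transience} (the paper gives no separate argument), and your scaling computation is precisely the standard way to convert the norm equivalence $\aV{\cdot}_o \leq C\,\ow{Q}^{1/2}$ on $C_c(X)$ into the metric inequality $\gamma \leq C\,\gamma_o$, with $\gamma_o \leq \gamma$ already supplied by the proposition on properties of the metrics.
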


\section{Harmonic functions}

 In this appendix we discuss the relation of bounded harmonic functions and harmonic functions of finite energy.
 The following theorem is certainly well-known to experts. Due to the lack of a reference we include a proof for the convenience of the reader.

 \begin{theorem} \label{theorem-harmonic-functions}
 Let $(b,0)$ be a transient connected graph over $X$ and suppose there exists a non-constant harmonic function of finite energy on $X$. Then there exists a non-constant bounded harmonic function on $X$.
 \end{theorem}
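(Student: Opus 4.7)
My plan is to argue by contradiction: I will assume every bounded harmonic function on $X$ is constant and deduce from this that every harmonic function in $\ow D$ must also be constant, contradicting the hypothesis. Let $h \in \ow D$ be harmonic and consider the truncations $h_M := (h \wedge M) \vee (-M)$ for $M > 0$. By the cut-off property of $\ow Q$ each $h_M$ lies in $\ow D \cap \ell^\infty(X)$ with $\ow Q(h_M) \leq \ow Q(h)$. I apply the Royden decomposition (Proposition \ref{proposition-decomposition}) to write $h_M = g_M + u_M$ with $g_M \in \ow D_0$ and $u_M \in \ow D$ harmonic; by the moreover part of that proposition $u_M$ is bounded, so the contradiction hypothesis forces $u_M$ to equal some constant $c_M$, and hence $g_M = h_M - c_M \in \ow D_0$.

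The key step is to pass to a limit in $M$ and conclude that $h$ itself differs from an element of $\ow D_0$ by a constant. After normalizing so that $h(o) = 0$ for a basepoint $o$, I have $h_M(o) = 0$ and therefore $c_M = -g_M(o)$. Using transience, Theorem \ref{char-transience} provides a constant $C_o$ with $|\varphi(o)| \leq C_o \ow Q^{1/2}(\varphi)$ for every $\varphi \in C_c(X)$, which extends to $\ow D_0$ by density and continuity of point evaluation (Lemma \ref{l:pointevaluation}). Combined with the orthogonality estimate $\ow Q(g_M) \leq \ow Q(h_M) \leq \ow Q(h)$ (since $u_M$ is a constant, hence of zero energy as $c \equiv 0$), this bounds both $|c_M|$ and $\aV{g_M}_o$ uniformly in $M$. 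By weak sequential compactness in the Hilbert space $(\ow D_0, \as{\cdot,\cdot}_o)$ I extract a subsequence $g_{M_k}$ converging weakly, and hence pointwise by Lemma \ref{l:pointevaluation}, to some $g \in \ow D_0$. For each fixed $x$ we have $h_{M_k}(x) = h(x)$ as soon as $M_k \geq |h(x)|$, so $c_{M_k} = h(x) - g_{M_k}(x) \to h(x) - g(x)$; since the left-hand side is independent of $x$, the function $h - g$ must be a constant $c$.

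To conclude, $h = g + c$ exhibits the harmonic function $h \in \ow D$ as a sum of an element $g \in \ow D_0$ and the constant $c$, which lies in $\ow D$ and is harmonic because the killing term vanishes. But $h = 0 + h$ is also such a decomposition, and the uniqueness asserted in the Royden decomposition forces $g \equiv 0$ and $h \equiv c$, so $h$ is constant, contradicting our choice of $h$ as non-constant. The main obstacle I foresee is precisely controlling the a priori unknown constants $c_M$ so that a weak limit can be extracted; it is here that transience enters essentially, through the point-evaluation bound on $\ow D_0$, which pins down $c_M = -g_M(o)$ in terms of the controlled energy of $g_M$.
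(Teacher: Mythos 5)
Your proof is correct and follows essentially the same route as the paper's: truncate the harmonic function, apply the Royden decomposition to the truncations, observe that under the negation of the conclusion all the bounded harmonic parts are constants, and use weak compactness to reach a contradiction. The only variation is the endgame -- you extract a weak limit of the $\ow{D}_0$-components and invoke uniqueness of the Royden decomposition for $h = g + c$, whereas the paper shows $\ow{Q}(f - f_n) \to 0$ directly and contradicts this with the Pythagorean lower bound $\ow{Q}(f - f_n) \geq \ow{Q}(f)$; both hinge on the same transience input.
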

 \begin{proof}
  Let $f \in \ow{D}$ be harmonic and non-constant and consider the functions $f_n:= (f\wedge n) \vee (-n)$. We use the Royden decompositon (Proposition~\ref{proposition-decomposition}) to obtain bounded $(f_n)_0 \in \ow{D}_0$ and bounded harmonic funcions $(f_n)_h$, such that $f_n = (f_n)_0 + (f_n)_h$. It suffices to show that $(f_n)_h$ is non-constant for some $n$.

  By the cut-off property of $\ow{Q}$ we obtain $\ow{Q}(f_n) \leq \ow{Q}(f)$ for each $n$.
This implies that $(f_n)$ is a bounded sequence in
$(\ow{D},\as{\cdot,\cdot}_o)$ and hence has a weakly convergent
subsequence. Without loss of generality we assume that $(f_n)$
itself converges weakly.  From this and the pointwise convergence of
$f_n$ to $f$ we obtain
  \begin{align*}\ow{Q}(f-f_n) &= \ow{Q}(f) + \ow{Q}(f_n)  - 2 \ow{Q}(f,f_n)\\ &\leq 2(\ow{Q}(f) -  \ow{Q}(f,f_n)) \to 0,\text{ as } n\to \infty.\end{align*}
  Now assume that for each $n$ the function $(f_n)_h$ is constant. Using Lemma~\ref{lem-green formula} we obtain
  $$\ow{Q}(f-f_n) = \ow{Q}(f - (f_n)_h) + \ow{Q}((f_n)_0) \geq \ow{Q}(f - (f_n)_h) = \ow{Q}(f),$$
  where the last equality follows from the fact that $(f_n)_h$ is constant and $c \equiv 0$. Taking the limit $n \to \infty$ shows $\ow{Q}(f) = 0$ which contradicts the assumption that $f$ was not constant. This finishes the proof.
 \end{proof}

\end{appendix}

%\small

\end{document}